\documentclass[a4paper,11pt]{amsart}

\addtolength{\hoffset}{-0.5cm}
\addtolength{\textwidth}{1cm}

\usepackage[utf8]{inputenc}
\usepackage{lmodern}
\usepackage{url}
\usepackage{amsmath, amsthm, amssymb, amscd, accents, bm}
\usepackage{mathtools}
\usepackage{mdwlist}
\usepackage{paralist}
\usepackage{graphicx}
\usepackage{datetime}
\usepackage{hyperref}
\usepackage{caption}
\usepackage[sort,nocompress]{cite}
\mathtoolsset{showonlyrefs}
\usepackage{units}
\usepackage{tikz}
\usepackage{colonequals}

\newtheorem{definition}{Definition}[section]
\newtheorem{theorem}[definition]{Theorem}
\newtheorem{lemma}[definition]{Lemma}
\newtheorem{corollary}[definition]{Corollary}
\newtheorem{proposition}[definition]{Proposition}

\newtheorem{remark}[definition]{Remark}
\newtheorem*{question}{Question}

\def\N{{\mathbb N}}
\def\Z{{\mathbb Z}}
\def\R{{\mathbb R}}
\def\T{{\mathbb T}}
\def\C{{\mathbb C}}

\newcommand{\Cd}{{\C^d}}
\newcommand{\lt}{{L^2(\R)}}

\newcommand{\re}{{\mathrm{Re}}}
\newcommand{\im}{{\mathrm{Im}}}

\newcommand{\ft}{{\mathcal{F}}}

\DeclareMathOperator{\dist}{dist}
\DeclareMathOperator{\argmin}{argmin}

\makeatletter
\newcommand\thankssymb[1]{\textsuperscript{\@fnsymbol{#1}}}

\allowdisplaybreaks


\begin{document}

\title[Phase retrieval and perturbation of Liouville sets]{Phase retrieval in Fock space and \\ perturbation of Liouville sets}

\author[Philipp Grohs]{Philipp Grohs}
\address{Faculty of Mathematics, University of Vienna, Oskar-Morgenstern-Platz 1, 1090 Vienna, Austria}
\address{Research Network DataScience@UniVie, University of Vienna, Kolingasse 14-16, 1090 Vienna, Austria}
\address{Johann Radon Institute of Applied and Computational Mathematics, Austrian Academy of Sciences, Altenbergstrasse 69, 4040 Linz, Austria}
\email{philipp.grohs@univie.ac.at}

\author[Lukas Liehr]{Lukas Liehr}
\address{Faculty of Mathematics, University of Vienna, Oskar-Morgenstern-Platz 1, 1090 Vienna, Austria}
\email{lukas.liehr@univie.ac.at}

\author[Martin Rathmair]{Martin Rathmair}
\address{Institut de Math\'ematiques de Bordeaux, Universit\'e Bordeaux, UMR CNRS 5251, 351 Cours
de la Lib\'eration 33405, Talence, France}
\email{martin.rathmair@math.u-bordeaux.fr}

\subjclass[2020]{30H20, 46E22, 94A12, 94A20}
\keywords{phase retrieval, phaseless sampling, perturbations, Liouville sets}

\begin{abstract}
We study the determination of functions in Fock space from samples of their absolute value, known as the phase retrieval problem in Fock space. An important finding in this research field asserts that phaseless sampling on lattices of arbitrary density renders the problem unsolvable. The present study establishes solvability when using irregular sampling sets of the form $A \cup B \cup C$, where $A, B,$ and $C$ constitute perturbations of a Liouville set, i.e., a set with the property that all functions in Fock space bounded on the set are constant. The sets $A, B,$ and $C$ adhere to specific geometrical conditions of closeness and noncollinearity. We show that these conditions are sufficiently generic so as to allow the perturbations to be chosen also at random. By proving that Liouville sets occupy an intermediate position between sets of stable sampling and sets of uniqueness, we obtain the first construction of uniqueness sets for the phase retrieval problem in Fock space having a finite density. The established results apply to the Gabor phase retrieval problem in subspaces of $L^2(\mathbb{R})$, where we derive additional reductions of the size of uniqueness sets: for the class of real-valued functions, uniqueness is achieved from two perturbed lattices; for the class of even real-valued functions, a single perturbation suffices, resulting in a separated set.
\end{abstract}

\maketitle

\section{Introduction}

We are concerned with the problem of whether functions $F\in \mathcal{F}_\alpha(\mathbb{C})$ are determined from samples $\left(|F(u)|\right)_{u\in \mathcal{U}}$ of their absolute value on a set of sampling locations $\mathcal{U} \subseteq \mathbb{C}$.
Here,
$$
\mathcal{F}_\alpha(\mathbb{C}):=\left\{F:\mathbb{C}\to \mathbb{C}: \ F\textrm{ entire, }\int_{\mathbb{C}}|F(z)|^2 e^{-\alpha |z|^2} \,\mbox{d}A(z) < \infty \right\}
$$
denotes the Fock space with Euclidean area measure $\mbox{d}A(z)$ on $\C$. Since any function $H = \tau F$ with $\tau\in \mathbb{T} \coloneqq \{z\in \mathbb{C}: |z|=1 \}$ produces the same samples of its absolute values as $F$, we introduce the equivalence relation $$F\sim H \iff \exists \tau \in \T : H = \tau F,$$ and denote $\mathcal{U} \subseteq \mathbb{C}$ a \emph{uniqueness set for the phase retrieval problem in $\mathcal{F}_\alpha(\mathbb{C})$}, if the implication 
\begin{equation} \label{eq:uniqueness_set}
        \left(|F(u)|\right)_{u \in \mathcal{U}} = \left(|H(u)|\right)_{u \in \mathcal{U}} \implies F\sim H
\end{equation}
holds true for every $F,H \in \mathcal{F}_\alpha(\mathbb{C})$. The construction of uniqueness sets for the phase retrieval problem with desirable properties, as well as stable and efficient reconstruction algorithms, are of key importance in a number of applications of current interest, including diffraction imaging \cite{111,nature2}, audio processing \cite{nature3,pruuvsa2017phase,griffin1984signal}, and quantum mechanics \cite{appl1,Luef2019}. This is due to the fact that functions in Fock spaces arise naturally from the Gabor transform $\mathcal{G}f(x,\omega):= \int_\mathbb{R} f(t) e^{-\pi  (t-x)^2} e^{-2\pi i \omega t} \, \mbox{d}t$ of $f\in L^2(\mathbb{R})$, in the sense that the Bargmann transform  
\begin{equation}\label{eq:BargmannTransform}
   \mathcal{B}f(z) \coloneqq  \mathcal{G}f(\re{(z)},-\im{(z)}) \cdot e^{-\pi i \re{(z)}\im{(z)} +  \frac{\pi}{2} |z|^2}
\end{equation}
is a unitary operator from $\lt$ onto $\ft_\pi(\C)$. In addition, the Gabor transform plays a central role in signal processing and time-frequency analysis \cite{Groechenig}.  As a result, this problem has experienced a recent surge of research activity \cite{grohsliehr1,att1,att2,alaifari2020phase,AlaifariDaubechiesGrohsYin,alaifariGrohs,daubcahill,GrohsKoppensteinerRathmair,grohsLiehr3,grohsLiehr4,multiwindow}, with investigations conducted from various perspectives. These include finite-dimensional Gabor phase retrieval problems \cite{Bojarovska2016,Strohmer}, numerical reconstruction algorithms \cite{eldar,Iwen2022}, group theoretical settings \cite{fuhr2023phase,bartusel2023phase}, stability analysis \cite{frr, freeman2,freeman3,GrohsRathmair}, quantum harmonic analysis \cite{Luef2019}, and the study of the closely related Fourier phase retrieval problem \cite{bianchi}.

In order to be of practical value, a set $\mathcal{U}$ of sampling locations needs to satisfy the key property of having a finite density. A set $\mathcal{U} \subseteq \C$ is said to have finite density (or: is relatively separated), if
$$
\sup_{z \in \C} \#(\mathcal{U} \cap B_1(z)) < \infty,
$$
where $\#(\Omega)$ denotes the number of elements in a set $\Omega$, and $B_r(z) = \{ w \in \C : |w-z| \leq r \}$ denotes the closed ball of radius $r > 0$ around $z \in \C$. The Lebesgue measure of $B_r(z)$ is denoted by $|B_r(z)|$. Moreover, $\mathcal{U}$ is said to be uniformly distributed, if there exists a non-negative real number $D(\mathcal{U})$ satisfying
$$
\#(\mathcal{U} \cap B_r(z)) = D(\mathcal{U})|B_r(z)| + o(r^2), \quad r \to \infty,
$$
uniformly with respect to $z \in \C$. In this case, the quantity $D(\mathcal{U})$ is called the uniform density of $\mathcal{U}$. It corresponds to the average number of points in $\mathcal{U}$ per unit ball. Finally, we say that $\mathcal{U}$ is separated (or: uniformly discrete), if
$$
\delta(\mathcal{U}) \coloneqq \inf_{\substack{u,u' \in \mathcal{U} \\ u \neq u'}} |u - u'| > 0.
$$
The quantity $\delta(\mathcal{U})$ is called the separation constant of $\mathcal{U}$. Notice, that a set has finite density, if and only if it is a finite union of separated sets.

Compared to the phase retrieval problem, the uniqueness problem of $F\in \mathcal{F}_\alpha(\mathbb{C})$ from (non-phaseless) samples $(F(\lambda))_{u \in \mathcal{U}}$ is much better understood. Recall, that a set $\Lambda \subseteq \C$ is said to be a \emph{uniqueness set} for $\ft_\alpha(\C)$, if the only function in $\ft_\alpha(\C)$ that vanishes on $\Lambda$ is the zero function (we note, that this notion has to be distinguished from the notion of a \emph{uniqueness set for the phase retrieval problem in $\ft_\alpha(\C)$}). Uniqueness sets with finite density have already been constructed in the 1990s:
results by Lyubarskii \cite{lyubarskiiframes} and Seip-Wallstén \cite{Seip+1992+91+106,Seip1992} show that any $F\in \mathcal{F}_\alpha(\mathbb{C})$ is stably determined by its samples $(F(u))_{u\in \mathcal{U}}$, provided that $\mathcal{U}$ is separated, and its lower Beurling density exceeds the transition value $\frac{\alpha}{\pi}$, also known as the Nyquist rate.  Further studies related to uniqueness sets in Fock space can be found in \cite{ASCENSI2009277,BELOV2020438,aadi2022zero,WANG200460,enstad2022coherent}.
In most applications, it is desirable to sample along a regular set, which is why lattices constitute an important choice for sampling sets. Recall, that a lattice $\Lambda \subseteq \C$ with periods $\omega_1,\omega_2 \in \C \setminus \{ 0 \}$ is a set of the form
$$
\Lambda = \left \{ m\omega_1 + n \omega_2 : m,n \in \Z \right \},
$$
where $\im \, \frac{\omega_2}{\omega_1} > 0$. The quantity $s(\Lambda) > 0$ denotes the area of a period parallelogram of $\Lambda$, and is given by the formula
$$
s(\Lambda) = \im(\overline{\omega_1}\omega_2).
$$
The density of a lattice $\Lambda$ coincides with the reciprocal of $s(\Lambda)$. 
A characterization of uniqueness sets for $\mathcal{F}_\alpha(\mathbb{C})$ with a lattice structure was obtained by Perelomov \cite{perelomov71}. It states that any $F\in \mathcal{F}_\alpha(\mathbb{C})$ is uniquely determined by its samples $(F(\lambda))_{\lambda\in \Lambda}$, if and only if $s(\Lambda)\le \frac{\pi}{\alpha}$ \cite{perelomov71}.

The sampling problem from phaseless samples behaves in a completely different way. Recent work \cite{grohsLiehr3,grohsLiehr4,alaifari2020phase} has shown, that no lattice can be a uniqueness set for the phase retrieval problem in $\mathcal{F}_\alpha(\mathbb{C})$, irrespective of how small $s(\Lambda)$ (or equivalently, how large $D(\Lambda)$) is. A natural question is then, whether the phaseless sampling problem is simply unsolvable or, alternatively, if the lack of uniqueness is due to algebraic obstructions caused by the regular lattice structure. In other words:
\begin{question}
    Do there exist uniqueness sets of finite density for the phase retrieval problem?     
\end{question} 
The main contribution of this article is to provide a positive answer to this question, thereby affirming that the lack of uniqueness is indeed due to the regular structure of lattice sampling sets:

\vspace{0.2cm}

\begin{quote}
{\it 
    While there are no lattices $\Lambda$ which are uniqueness sets for the phase retrieval problem in $\mathcal{F}_\alpha(\mathbb{C})$, for every $d > \frac{12 \alpha}{\pi}$ there exist uniqueness sets $\mathcal{U}$ for the phase retrieval problem in $\mathcal{F}_\alpha(\mathbb{C})$ having finite uniform density $D(\mathcal{U}) = d$. These uniqueness sets can be constructed as a union of three perturbations of a lattice, where the perturbations can be either deterministic or random. Consequently, solving the phase retrieval problem from samples with finite density is possible, but it requires irregular sampling.
    }
\end{quote}

\vspace{0.2cm}

The latter implies that -- contrary to most other sampling problems -- for the phase retrieval problem, the main obstruction to being a uniqueness set is not due to insufficient density, but due to the regular structure inherent in lattices. 
In the context of completeness of systems of discrete translates, and universal sampling of band-limited functions, similar phenomena have been encountered in articles by Olevskii, Ulanovskii and Lev \cite{olevskii:completeness, olevskii:universal1, olevskii:universal2, olevskii:discretetranslates, lev2024completenessuniformlydiscretetranslates}.

In the subsequent sections, we present a notably more comprehensive approach to constructing uniqueness sets. The method builds upon the perturbation of Liouville sets.

\begin{definition}
Let $\Lambda \subseteq \C$, and let $V$ be a set of entire functions. We say that $\Lambda$ is a Liouville set for $V$, if every $f \in V$ that is bounded on $\Lambda$ is a constant function.
\end{definition}

In the extremal case, we have that $\Lambda=\C$ is a Liouville set for the collection of all entire functions, according to the classical Liouville theorem. We will be predominantly interested in sets $\Lambda$ which are of discrete nature. 
If $\Lambda=(\lambda_n)_{n\in\N}$ satisfies $|\lambda_n|\to \infty$ as $n\to \infty$, Weierstrass factorization theorem guarantees the existence of a nonzero entire function which vanishes on $\Lambda$. In particular, such a set $\Lambda$ cannot be a Liouville set for the space of entire functions, which demonstrates that the restriction to a subclass $V$ is then inevitable.

From a more general perspective, a Liouville set $\Lambda\subsetneq \C$ can be understood as a stronger version of the classical Liouville principle: to conclude that a given function is constant it suffices to check that it is bounded on $\Lambda$.  In the context of harmonic functions on the graph $\Z^2$ a strong Liouville principle has been established in \cite{buhovsky:discreteharmonicfct}.
Furthermore, it is natural to consider the problem of identifying Liouville sets that are in a sense minimal. We address this question for $V=\ft_\alpha(\C)$.

Recall that a discrete set $\Gamma \subseteq \C$ is said to be a set of stable sampling for $\ft_\alpha(\C)$, if there exist constants $A,B>0$ such that
$$
A\| F \|_{\ft_\alpha(\C)}^2 \leq \sum_{\gamma \in \Gamma} e^{-\alpha|\gamma|^2}|F(\gamma)|^2 \leq B \| F \|_{\ft_\alpha(\C)}^2
$$
for all $F \in \ft_\alpha(\C)$.
Sets of stable sampling were characterized by Lyubarskii, Seip, and Wallstén in \cite{Seip1992,Seip+1992+91+106,lyubarskiiframes}. The results of the present manuscript show that Liouville sets occupy an intermediate position between sets of stable sampling and uniqueness sets. Moreover, they exhibit a close association with sets of uniqueness for the phase retrieval problem. This can be summarized as follows:

\vspace{0.2cm}
\begin{quote}
{\it 
    Every set of stable sampling for $\ft_\alpha(\C)$ is a Liouville set for $\ft_\alpha(\C)$, and every Liouville set for $\ft_\alpha(\C)$ is a uniqueness set for $\ft_\alpha(\C)$. 
    Uniqueness sets for the phase retrieval problem in $\ft_\alpha(\C)$ can be obtained by combining three perturbations of a Liouville set for $\ft_{4\alpha}(\C)$. 
    The perturbations may either be selected according to a deterministic condition or at random.
}

\end{quote}

\vspace{0.2cm}

Our findings have a series of implications for the Gabor phase retrieval problem. To fix notation, we say that a set $\mathcal{U} \subseteq \R^2$ is a \emph{uniqueness set for the Gabor phase retrieval problem in} $X \subseteq \lt$, if
\begin{equation}\label{eq:def_uniqueness_gabor}
        \ \left(|\mathcal{G}f(u)|\right)_{u \in \mathcal{U}} = \left(|\mathcal G h (u)|\right)_{u \in \mathcal{U}} \implies f\sim h
\end{equation}
holds true for every $f,h \in X$. In a similar fashion as above, $f \sim h$ indicates the existence of a value $\tau \in \T$ such that $f=\tau h$. 
Note that if $X$ consists of real-valued functions exclusively, then one can even conclude that $f=\pm h$, provided that the respective Gabor transforms coincide on a uniqueness set $\mathcal{U}$ for Gabor phase retrieval in $X$.
Denoting by $L^2(\R,\R)$ the space of all real-valued functions in $\lt$, and by $L^2_e(\R,\R)$ the space of all even functions in $L^2(\R,\R)$, we can summarize our findings in the following way:

\vspace{0.2cm}

\begin{quote}
{\it 
    For every $d > 12$ there exists a uniqueness set $\mathcal{U}$ for the Gabor phase retrieval problem in $\lt$ having uniform density $D(\mathcal{U}) = d$. Moreover, for every $d > 6$ (resp. $d > 3$), there exist uniqueness sets for the Gabor phase retrieval problem in $L^2(\R,\R)$ (resp. $L^2_e(\R,\R)$) of uniform density $D(\mathcal{U})=d$. In all scenarios, these uniqueness sets are formed by combining perturbations of a subset of a lattice and can be either deterministic or random. Notably, the uniqueness set for the space $L^2_e(\R,\R)$ can be created by a single perturbation of a lattice, resulting in a separated set.
    }
\end{quote}

\vspace{0.2cm}

A prominent application of the Gabor phase retrieval problem occurs in Ptychography, a powerful diffraction microscopy technique capable of achieving significantly higher resolution than methods based on conventional optics \cite{rodenburg2019ptychography,111}. Beyond its mathematical contributions, our work for the first time provides rigorous results on how to collect measurements from diffraction patterns such that unique recoverability can be guaranteed.

\section{Results}

\subsection{Closeness and noncollinearity}\label{sec:closeness}

In order to state the general version of our findings, we require two geometrical concepts related to closeness and noncollinearity.

\begin{definition}\label{def:f_close}
Let $A = (a_\lambda)_{\lambda \in \Lambda} \subseteq \C, \,B = (b_\lambda)_{\lambda \in \Lambda} \subseteq \C$ be two sequences indexed by $\Lambda$, and let $f : \C \to [0,\infty)$ be a function. We say that $A$ is $f$-close to $B$ if there exists a constant $\kappa>0$ such that
$$
|a_\lambda - b_\lambda| \leq \kappa f(b_\lambda), \quad \lambda \in \Lambda.
$$
Moreover, $A$ is said to be uniformly close to $B$, if $A$ is $f$-close to $B$ with respect to a constant function.
\end{definition}

Notice, that uniform closeness is the common notion of closeness in the sampling theory and Fock space literature \cite{Seip1992,zhu:fock}. Definition \ref{def:f_close} can be understood as a more flexible notion of closeness as it allows that the permitted deviation depends on the position.
For instance, if $f : \C \to [0,\infty)$ is such that $f(z) \to 0$ as $|z| \to \infty$,
then -- provided that $\Lambda$ is unbounded -- the notion of $f$-closeness is strictly stronger than uniform closeness, as the margin of the allowed amount of perturbation becomes arbitrarily small when $|\lambda| \to \infty$. In addition to a closeness concept, we require geometrical notions related to noncollinearity.

\begin{definition}
    Let $a,b,c \in \C$, and let $\varphi_1,\varphi_2,\varphi_3 \in (0,\pi)$ be the interior angles of the triangle with vertices $a,b,c$, enumerated such that $\varphi_1 \leq \varphi_2\leq \varphi_3$. Then we define
    $$
    \varphi(a,b,c) \coloneqq \varphi_2 \in [0,\tfrac{\pi}{2}]. \footnote{
    By convention, if the triangle is degenerated (that is, if $a,b,c$ are collinear) we define 
    $\varphi(a,b,c)\coloneqq0$.}
    $$
\end{definition}

The previous definition allows to introduce noncollinear resp. uniformly noncollinear sequences.

\begin{definition}
Let $J$ be an index set.
We say that $(a_j)_{j \in J}, (b_j)_{j \in J}, (c_j)_{j \in J} \subseteq \C$ are noncollinear sequences if for every $j \in J$ it holds that $a_j,b_j,c_j$ are noncollinear. We say that $(a_j)_{j \in J}, (b_j)_{j \in J}, (c_j)_{j \in J}$ are uniformly noncollinear if there exists an angle $\theta \in (0,\frac{\pi}{2})$ such that
$$
\theta \leq \varphi(a_j,b_j,c_j), \quad j \in J.
$$
\end{definition}

\subsection{Deterministic perturbations of Liouville sets}

Having settled the required notions of closeness and noncollinearity, we are prepared to state the first main result of the article, which proves uniqueness of the phase retrieval problem in Fock space from unions of three perturbed Liouville sets.

\begin{theorem}\label{thm:Main1}
Let $\alpha > 0$, and let $\Lambda \subseteq \C$ be a Liouville set for $\ft_{4\alpha}(\C)$. Further, let $f: \C \to [0,\infty)$ be given by $f(z) = e^{-\gamma |z|^2}$ with $\gamma > 2\alpha$.
Suppose that $A=(a_\lambda)_{\lambda\in\Lambda},B=(b_\lambda)_{\lambda\in\Lambda},C=(c_\lambda)_{\lambda\in\Lambda} \subseteq \C$ are $f$-close to $\Lambda$.
If
\begin{equation}\label{cond:anglesgeneral}
   \exists \beta\in (0,\gamma-2\alpha), \, \exists L>0:\quad \frac{|\lambda| e^{-\beta|\lambda|^2}}{\varphi(a_\lambda,b_\lambda,c_\lambda)}  \le L,\quad \lambda\in\Lambda,
\end{equation}
then
$$
\mathcal{U} \coloneqq A \cup B \cup C 
$$
is a uniqueness set for the phase retrieval problem in $\mathcal{F}_\alpha(\C)$. In particular, condition (\ref{cond:anglesgeneral}) holds if $A,B,C$ are uniformly noncollinear.
\end{theorem}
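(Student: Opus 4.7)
The plan is to translate the phase-retrieval hypothesis into the vanishing on $\mathcal{U}$ of the real-analytic function $g(z)=|F(z)|^2-|H(z)|^2$, to extract super-exponential decay of $g$ along $\Lambda$ via a local three-point Taylor expansion exploiting the noncollinearity, and then to invoke the Liouville property of $\Lambda$ for $\ft_{4\alpha}(\C)$ through a one-variable reduction of the two-variable entire function
\[
\Phi(z,w)\coloneqq F(z)\overline{F(\bar w)}-H(z)\overline{H(\bar w)},\qquad g(z)=\Phi(z,\bar z).
\]
Throughout, I assume $F,H\in\ft_\alpha(\C)\setminus\{0\}$ satisfy $|F(u)|=|H(u)|$ for every $u\in\mathcal{U}=A\cup B\cup C$, so that $g$ vanishes on $\mathcal U$; the goal is to conclude $F\sim H$.

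Fix $\lambda\in\Lambda$ and set $h_\star=\star_\lambda-\lambda$ for $\star\in\{a,b,c\}$, so $|h_\star|\le\kappa e^{-\gamma|\lambda|^2}$. The standard pointwise Fock-space estimates give $\|D^2 g\|_{L^\infty(B_1(\lambda))}\lesssim(1+|\lambda|^2)e^{\alpha|\lambda|^2}$. Second-order Taylor expansion of $g$ at $\lambda$ turns the three equations $g(\lambda+h_\star)=0$ into a $3\times 3$ linear system in the unknowns $g(\lambda),\partial_x g(\lambda),\partial_y g(\lambda)$, whose determinant equals twice the signed area of the triangle spanned by $a_\lambda,b_\lambda,c_\lambda$ and is therefore bounded below by $\kappa^2 e^{-2\gamma|\lambda|^2}\sin\varphi(a_\lambda,b_\lambda,c_\lambda)$. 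Solving and controlling the quadratic Taylor remainder yields
\[
\max\bigl(|g(\lambda)|,\,|\partial_x g(\lambda)|,\,|\partial_y g(\lambda)|\bigr)\;\lesssim\;\frac{(1+|\lambda|^2)e^{(\alpha-2\gamma)|\lambda|^2}}{\varphi(a_\lambda,b_\lambda,c_\lambda)}.
\]
Applying \eqref{cond:anglesgeneral} together with the slack $\beta<\gamma-2\alpha$ gives $|g(\lambda)|\lesssim(1+|\lambda|)e^{-\delta|\lambda|^2}$ for some $\delta>0$, along with analogous super-exponential decay of the Wirtinger derivatives $\partial_z\Phi(\lambda,\bar\lambda)$ and $\partial_w\Phi(\lambda,\bar\lambda)$.

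The natural single-variable candidate in $\ft_{4\alpha}(\C)$ is the holomorphic diagonal $\Psi(z)\coloneqq\Phi(z,z)=F(z)\overline{F(\bar z)}-H(z)\overline{H(\bar z)}$, whose pointwise bound $|\Psi(z)|\lesssim e^{\alpha|z|^2}$ places it in $\ft_{4\alpha}(\C)$. \emph{The main obstacle} is the passage from smallness of $\Phi$ at $(\lambda,\bar\lambda)$ to a uniform bound on $\Psi(\lambda)=\Phi(\lambda,\lambda)$, since the two second arguments differ by $2i\,\mathrm{Im}\,\lambda$, which need not be small. I plan to combine the $w$-derivative data $\partial_w\Phi(\lambda,\bar\lambda)$ with Cauchy estimates for $w\mapsto\Phi(\lambda,w)$ on a disk of radius commensurate with $|\mathrm{Im}\,\lambda|$, using the intrinsic growth $|\Phi(z,w)|\lesssim e^{\alpha(|z|^2+|w|^2)/2}$; the slack $\gamma-2\alpha-\beta>0$ from \eqref{cond:anglesgeneral} is precisely what is needed to absorb the $e^{\alpha|\lambda|^2}$ inflation coming from translating the second argument from $\bar\lambda$ to $\lambda$. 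Once $\Psi$ is shown to be bounded on $\Lambda$, the Liouville hypothesis forces $\Psi\equiv c$, and the decay at infinity along $\Lambda$ forces $c=0$, so $\Phi(z,z)\equiv 0$. Expanding $\Phi(z,z+\eta)$ in powers of $\eta$ and iterating the same argument with each Taylor coefficient---which again belongs to $\ft_{4\alpha}(\C)$ and satisfies the same decay on $\Lambda$---upgrades this to $\Phi\equiv 0$ on $\C^2$. Setting $w=0$ in the resulting identity $F(z)\overline{F(\bar w)}=H(z)\overline{H(\bar w)}$ (after arranging $F(0)\neq 0$) gives $F=(\overline{H(0)}/\overline{F(0)})\,H$ with $|\overline{H(0)}/\overline{F(0)}|=1$, i.e.\ $F\sim H$. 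Finally, the implication that uniform noncollinearity of $A,B,C$ implies \eqref{cond:anglesgeneral} is immediate from the uniform lower bound $\varphi(a_\lambda,b_\lambda,c_\lambda)\ge\theta>0$.
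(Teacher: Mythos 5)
Your proposal has two genuine gaps, either of which is fatal as written. The first is the determinant bound in the three-point Taylor step. The determinant of your $3\times 3$ system is indeed twice the signed area of $\Delta(a_\lambda,b_\lambda,c_\lambda)$, but the hypotheses only bound the perturbations \emph{from above}: $|a_\lambda-\lambda|,|b_\lambda-\lambda|,|c_\lambda-\lambda|\le\kappa e^{-\gamma|\lambda|^2}$ gives an \emph{upper} bound $\lesssim \kappa^2e^{-2\gamma|\lambda|^2}$ on the area, not a lower one. A tiny equilateral triangle of diameter $e^{-100|\lambda|^2}$ centred at $\lambda$ satisfies $f$-closeness and has $\varphi=\pi/3$, yet its area is far smaller than $\kappa^2e^{-2\gamma|\lambda|^2}\sin\varphi$. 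Cramer's rule then divides the Taylor remainders by an uncontrollably small quantity, so the claimed bounds on $g(\lambda)$ and $\nabla g(\lambda)$ do not follow. The paper avoids exactly this: it applies Rolle's theorem to $Q=|F|^2-|H|^2$ on the segments $[c_\lambda,a_\lambda]$ and $[c_\lambda,b_\lambda]$ to obtain points where two \emph{directional derivatives vanish exactly}, and the $2\times 2$ system recovering $\partial_zQ$ from two directional derivatives has determinant $\sin(\theta_1-\theta_2)$ — it depends only on the angle, never on the side lengths, so it degrades gracefully (in fact improves) as the triangle shrinks.

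The second gap is the step you yourself flag as the main obstacle, and your proposed fix cannot work. Knowing that the entire function $w\mapsto\Phi(\lambda,w)$ (of order two) and its first derivative are small at the single point $\bar\lambda$ gives no control whatsoever at $\lambda$, a distance $2|\mathrm{Im}\,\lambda|$ away: $\Phi(\lambda,w)=\delta(w-\bar\lambda)^2$ has value and derivative zero at $\bar\lambda$ but $\Phi(\lambda,\lambda)=-4\delta(\mathrm{Im}\,\lambda)^2$. No amount of slack in the exponents repairs this; you would need all $w$-derivatives at $\bar\lambda$, which three-point first-order data does not provide — and the same objection defeats the subsequent iteration over Taylor coefficients needed to upgrade $\Phi(z,z)\equiv0$ to $\Phi\equiv0$ (the diagonal identity $FF^*=HH^*$ alone does \emph{not} imply $F\sim H$, by zero-flipping). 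The paper sidesteps the off-diagonal problem entirely by working with the genuinely one-variable function $G=FH(FH'-F'H)\in\ft_{4\alpha}(\C)$: since $|F(c_\lambda)|=|H(c_\lambda)|$, one has $|G(c_\lambda)|=|F(c_\lambda)|^2\,|(F'\overline F-H'\overline H)(c_\lambda)|$, everything is evaluated at the physical sample point, the perturbed set $C$ is itself a Liouville set for $\ft_{4\alpha}(\C)$, and $G$ bounded on $C$ with $\inf_\lambda|G(c_\lambda)|=0$ forces $G\equiv0$, after which Proposition \ref{lma:uniqueness_lemma} concludes. I recommend restructuring your argument around that single-variable object.
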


We emphasize that Theorem \ref{thm:Main1} crucially depends on the interaction of two conditions: $f$-closeness of $A, B, C$ to a Liouville set $\Lambda$ with respect to the function $f(z)=e^{-\gamma |z|^2}$, and the angle condition \eqref{cond:anglesgeneral}. 
While the combination of these two conditions is sufficient for $\mathcal{U}$ to be a uniqueness set, dropping either of them generally does not guarantee unique recoverability. For details on this matter, we direct the reader to Remark \ref{rem:sharpness}.

\subsection{Sets of stable sampling, lattices, and Hardy's uncertainty principle}\label{sec:2.2}

The previous result may appear somewhat abstract, as it involves obtaining the uniqueness set for phase retrieval through perturbations of Liouville sets. To provide more clarity,  
we explore the concept of Liouville sets and its relationship with commonly studied sets of stable sampling and sets of uniqueness. Our main result in this direction reads as follows.

\begin{theorem}\label{thm:Main2}
    Let $\mathcal{S}_S$ be the collection of all subsets of $\C$ that contain a set of stable sampling for $\ft_\alpha(\C)$, let $\mathcal{S}_L$ be the collection of all Liouville sets for $\ft_\alpha(\C)$, and let $\mathcal{S}_U$ be the collection of all uniqueness sets for $\ft_\alpha(\C)$. Then it holds that
    $$
    \mathcal{S}_S \subsetneq \mathcal{S}_L \subsetneq \mathcal{S}_U.
    $$
\end{theorem}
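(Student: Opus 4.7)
I would split the theorem into four sub-claims: the two inclusions $\mathcal{S}_S \subseteq \mathcal{S}_L$, $\mathcal{S}_L \subseteq \mathcal{S}_U$, and their strictness.

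The inclusion $\mathcal{S}_L \subseteq \mathcal{S}_U$ is immediate. If $\Lambda \in \mathcal{S}_L$ and $F \in \mathcal{F}_\alpha(\C)$ vanishes on $\Lambda$, then $F$ is bounded on $\Lambda$ (by zero), so the Liouville property forces $F$ to be constant; since the empty set is not a Liouville set for $\mathcal{F}_\alpha(\C)$, $\Lambda$ is nonempty, and the constant must be zero.

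For the main inclusion $\mathcal{S}_S \subseteq \mathcal{S}_L$, let $\Gamma \subseteq \Lambda$ be a stable sampling set for $\mathcal{F}_\alpha(\C)$ and take $F \in \mathcal{F}_\alpha(\C)$ with $|F(\lambda)| \leq M$ on $\Lambda$, hence on $\Gamma$. By the Lyubarskii--Seip--Wallst\'en theorem, $\Gamma$ is simultaneously a sampling set for the uniform Fock space $\mathcal{F}_\alpha^\infty(\C)$, yielding
\[
\sup_{z \in \C} |F(z)|\, e^{-\alpha|z|^2/2} \leq C \sup_{\gamma \in \Gamma}|F(\gamma)|\, e^{-\alpha|\gamma|^2/2} \leq C M.
\]
This delivers only the weighted growth bound $|F(z)| \leq CMe^{\alpha|z|^2/2}$, which is consistent with many non-constant $F$; the essential extra ingredient is the unweighted bound $|F(\gamma)| \leq M$ on $\Gamma$. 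To exploit it I would pass through the Bargmann transform into the Gabor picture: writing $F = \mathcal{B}f$ (after rescaling to $\alpha = \pi$), the pointwise bound on $\Gamma$ translates into critical Gaussian decay of $|\mathcal{G}f|$ on a Gabor sampling lattice, and a Hardy-type uncertainty principle (as signalled by the title of Section~\ref{sec:2.2}) pins $f$ down as a scalar multiple of the Gaussian window, so that $F$ must be a constant function.

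For the strict inclusion $\mathcal{S}_L \subsetneq \mathcal{S}_U$, the real line $\R \subset \C$ serves as a witness: $\R$ is a uniqueness set by the identity principle, while $F(z) = \sin z$ lies in $\mathcal{F}_\alpha(\C)$ (using $|\sin z|^2 = \sin^2(\re z) + \sinh^2(\im z)$ against the Gaussian weight), is bounded by $1$ on $\R$, and is non-constant. For $\mathcal{S}_S \subsetneq \mathcal{S}_L$, I would take a lattice $\Lambda$ of critical area $s(\Lambda) = \pi/\alpha$: every subset of $\Lambda$ has density at most $\alpha/\pi$ and is therefore not strictly above the Nyquist rate, so $\Lambda$ contains no stable sampling set; on the other hand, the Bargmann--Hardy argument applied at the critical Gabor density shows that $\Lambda$ is nevertheless a Liouville set.

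The main obstacle is the inclusion $\mathcal{S}_S \subseteq \mathcal{S}_L$: classical sampling theory only supplies a weighted $L^\infty$-estimate for $F$, which does not preclude non-constant examples. The crux of the proof is to bring the pointwise bound $|F(\gamma)| \leq M$ into play through Hardy's uncertainty principle (or an equivalent sharp growth estimate for Fock-space functions), converting a weighted bound of sampling type into an honest uniform bound on $\C$ that forces $F$ to be constant.
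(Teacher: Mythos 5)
Your treatment of $\mathcal{S}_L \subseteq \mathcal{S}_U$ is correct, and your witness for the strictness of that inclusion (the real line together with $\sin z$, which lies in $\ft_\alpha(\C)$, is bounded on $\R$, and is non-constant, while $\R$ is a uniqueness set by the identity principle) is valid and in fact simpler than the paper's example, which uses a lattice at the critical density $s(\Lambda)=\frac{\pi}{\alpha}$: such a lattice is a uniqueness set by Perelomov's theorem but fails to be a Liouville set because the modified Weierstrass $\sigma$-function divided by two linear factors is a non-constant element of $\ft_\alpha(\C)$ bounded on $\Lambda$ (Remark \ref{rem:critical_density}).

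There are, however, two genuine gaps. First, your argument for the core inclusion $\mathcal{S}_S \subseteq \mathcal{S}_L$ is circular: the ``Hardy-type uncertainty principle'' you invoke --- that critical Gaussian decay of $|\mathcal{G}f|$ on a discrete sampling set forces $f$ to be a Gaussian --- is, after undoing the Bargmann transform, \emph{exactly} the statement that a set of stable sampling is a Liouville set. In the paper this discrete Hardy principle (Theorem \ref{cor:Hardy}) is a \emph{consequence} of Theorem \ref{thm:Main2}, not an input to it, and the classical Gr\"ochenig--Zimmermann theorem requires the Gaussian bound on all of $\R^2$ rather than on a discrete set, so it cannot close the gap. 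The paper's actual mechanism is the Seip--Wallst\'en Lagrange-type interpolation formula: for $\Gamma$ separated and uniformly close to $\Lambda_\beta$ with $\beta>\alpha$, one expands $F_s=F(\cdot+s)$ along $\Gamma-s$, uses the lower bound $|g_{\Gamma-s}'(\gamma)|\geq C e^{-c|\gamma|\log|\gamma|}e^{\frac{\beta}{2}|\gamma|^2}$ to bound $|F(s)|$ uniformly in $s$, and concludes by the classical Liouville theorem; you would need to supply this (or an equivalent) argument. Second, your witness for $\mathcal{S}_S \subsetneq \mathcal{S}_L$ fails: a lattice at the critical area $s(\Lambda)=\frac{\pi}{\alpha}$ is \emph{not} a Liouville set for $\ft_\alpha(\C)$ --- this is precisely the content of Theorem \ref{thm:Main3} --- so it separates $\mathcal{S}_L$ from $\mathcal{S}_U$, not $\mathcal{S}_S$ from $\mathcal{S}_L$. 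The paper instead takes the union of three lines through the origin cutting the plane into six acute sectors: by the Phragm\'en--Lindel\"of principle this union is a Liouville set, yet its lower Beurling density is zero, so it contains no set of stable sampling.
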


In addition to the inclusion given in Theorem \ref{thm:Main2}, we provide a characterization of all Liouville sets possessing a lattice structure. This characterization is expressed in terms of the lattice density, as expected when drawing parallels with related statements on sets of stable sampling and sets of uniqueness.

\begin{theorem}\label{thm:Main3}
    A lattice $\Lambda \subseteq \C$ is a Liouville set for $\ft_\alpha(\C)$ if and only if $s(\Lambda) < \frac{\pi}{\alpha}$.
\end{theorem}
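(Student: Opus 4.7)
The plan is to combine Theorem \ref{thm:Main2} with the classical characterizations of sampling and uniqueness sets for lattices in $\mathcal{F}_\alpha(\C)$, and to produce an explicit witness at the critical density via the Weierstrass sigma function. If $s(\Lambda)<\pi/\alpha$, then $D(\Lambda)=1/s(\Lambda)>\alpha/\pi$, so by the Lyubarskii--Seip--Wallst\'en theorem $\Lambda$ is a set of stable sampling for $\mathcal{F}_\alpha(\C)$; Theorem \ref{thm:Main2} then places $\Lambda$ inside the class of Liouville sets.

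For the reverse direction I argue the contrapositive $s(\Lambda)\ge\pi/\alpha \Rightarrow \Lambda$ is not Liouville, and split into two cases. If $s(\Lambda)>\pi/\alpha$, Perelomov's theorem furnishes a nonzero $F\in\mathcal{F}_\alpha(\C)$ that vanishes on $\Lambda$; this $F$ is bounded (in fact zero) on $\Lambda$ yet nonconstant, so $\Lambda$ is not Liouville. The critical case $s(\Lambda)=\pi/\alpha$ is the crux. Let $\sigma_\Lambda$ be the Weierstrass sigma function of $\Lambda$, an entire function with simple zeros exactly on $\Lambda$. A classical argument exploiting the quasi-periodicity of $\sigma_\Lambda$ together with the Legendre relation between the Weierstrass eta values produces a quadratic polynomial $q$ such that the modified function $\tilde\sigma_\Lambda(z):=\sigma_\Lambda(z)\,e^{q(z)}$ satisfies the uniform bound $|\tilde\sigma_\Lambda(z)|^2 e^{-\alpha|z|^2}\le M$ on $\C$.

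Picking two distinct points $\lambda_0,\lambda_1\in\Lambda$, set
$$F(z):=\frac{\tilde\sigma_\Lambda(z)}{(z-\lambda_0)(z-\lambda_1)}.$$
The simple zeros of $\tilde\sigma_\Lambda$ at $\lambda_0,\lambda_1$ cancel the poles, so $F$ is entire; in particular $|F(z)|^2 e^{-\alpha|z|^2}$ is continuous on $\C$, while outside a large disk it is dominated by $M/(|z-\lambda_0|^2|z-\lambda_1|^2)\sim|z|^{-4}$, yielding $F\in\mathcal{F}_\alpha(\C)$. By construction $F$ vanishes on $\Lambda\setminus\{\lambda_0,\lambda_1\}$ but is nonzero at $\lambda_0$ and $\lambda_1$, so $F$ is bounded on $\Lambda$ and not constant; hence $\Lambda$ fails to be a Liouville set.

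The main technical obstacle is establishing the modified sigma function $\tilde\sigma_\Lambda$ with a clean absolute bound against the Fock weight exactly at the critical density. This is the quadratic correction that compensates the directional growth of $\sigma_\Lambda$ forced by its quasi-periodicity, and it pins down the reason why the critical density is precisely the threshold separating Liouville lattices from merely uniqueness lattices.
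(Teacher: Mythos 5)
Your proof is correct, and the forward implication (stable sampling $\Rightarrow$ Liouville via Corollary \ref{cor:stable_sampling}) as well as the supercritical case $s(\Lambda)>\frac{\pi}{\alpha}$ coincide with the paper's argument. Where you genuinely diverge is at the critical density $s(\Lambda)=\frac{\pi}{\alpha}$: you construct an explicit nonconstant witness $F=\tilde\sigma_\Lambda(z)/\bigl((z-\lambda_0)(z-\lambda_1)\bigr)$ built from the quadratically corrected Weierstrass $\sigma$-function, whereas the paper's proof of Theorem \ref{thm:Main3} argues abstractly: a Liouville set remains a Liouville set (hence a uniqueness set) after removing finitely many points, so a critical lattice being Liouville would contradict the point-removal clause of Perelomov's theorem (Theorem \ref{thm:perelomov}). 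Your construction is precisely the content of the paper's Remark \ref{rem:critical_density}, so the authors clearly regard both routes as valid; the abstract argument is shorter but uses the full strength of Perelomov's theorem, while your route needs only the basic uniqueness characterization plus the growth bound $|\sigma_\Lambda(z)|\le Ce^{\frac{\pi}{2s(\Lambda)}|z|^2}$ (the paper cites this as \cite[Proposition 3.5]{Groechenig2009} rather than rederiving it from the Legendre relation), and in exchange it produces an explicit bounded nonconstant function on the critical lattice. One small presentational caveat: for the forward direction you should invoke Corollary \ref{cor:stable_sampling} (i.e.\ the inclusion $\mathcal{S}_S\subseteq\mathcal{S}_L$) rather than Theorem \ref{thm:Main2} as a whole, since the strictness assertions of Theorem \ref{thm:Main2} are themselves proved using Theorem \ref{thm:Main3}; citing only the corollary avoids any appearance of circularity.
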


It follows from Theorem \ref{thm:Main3}, that the assertion made in Theorem \ref{thm:Main1} remains valid for any lattice $\Lambda \subseteq \C$ that satisfies $s(\Lambda) < \frac{\pi}{4\alpha}$. In this scenario, the sets $A$, $B$, and $C$ in Theorem \ref{thm:Main1} represent perturbations of the lattice $\Lambda$. It is evident that, in this case, the collective set $A \cup B \cup C$ exhibits finite uniform density, yielding the first example in current literature of a uniqueness set for phase retrieval in Fock space with this property. This effectively addresses the inquiry posed in the introduction.

Having settled the relation to sets of stable sampling and uniqueness sets, we further notice, that each Liouville set for $\ft_\pi(\C)$ gives rise to a Gabor transform related Hardy uncertainty principle. The classical form of Hardy's uncertainty principle states that if a function $f \in \lt$ satisfies
$$
|f(x)| \leq c_1 e^{-a\pi x^2}, \quad |\hat f(\omega)| \leq c_2 e^{-b\pi \omega^2},
$$
for some $c_1,c_2,a,b > 0$ with $ab=1$, then $f(x) = c_3 e^{-a\pi x^2}$ for some $c_3 \in \C$ \cite{10.1112/jlms/s1-8.3.227,Folland1997}. In the above inequality, $\hat f$ denotes the Fourier transform of $f$, defined on $L^1(\R) \cap \lt$ by $\hat f(\omega) = \int_\R f(x)e^{-2\pi i \omega x} \, \mbox{d}x$, and extends to $\lt$ in the usual way. By replacing the Fourier transform with the Gabor transform (or more generally, windowed Fourier transforms), one arrives at a time-frequency analytic version of Hardy's uncertainty principle, originally established by Gröchenig and Zimmermann \cite{gr_zimmermann_2001}. The results presented in the present section can be seen as discretized versions of Hardy's uncertainty principle for the Gabor transform. The forthcoming statement elaborates on this interpretation.

\begin{theorem}\label{cor:Hardy}
    Let $\Lambda \subseteq \C \simeq \R^2$ be a set of stable sampling for $\ft_\pi(\C)$, and let $f \in \lt$. If there exists a constant $C \geq 0$ such that
    \begin{equation}\label{eq:hardy2}
    |\mathcal{G}f(\lambda)| \leq C e^{-\frac{\pi}{2}|\lambda|^2},\quad \lambda \in \Lambda,
\end{equation}
then $f(x)=ce^{-\pi x^2}$ for some $c \in\C$.
\end{theorem}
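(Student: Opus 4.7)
The natural strategy is to transfer the assumption \eqref{eq:hardy2} into a boundedness statement in Fock space via the Bargmann transform and then invoke Theorem \ref{thm:Main2}. Set $F := \mathcal{B}f \in \ft_\pi(\C)$. Directly from \eqref{eq:BargmannTransform} one reads off the modulus identity
$$
|F(z)| = |\mathcal{G}f(\re z, -\im z)| \cdot e^{\frac{\pi}{2}|z|^2}, \qquad z \in \C.
$$
Under the identification $\C \simeq \R^2,\, z \leftrightarrow (\re z, -\im z)$ that is implicit in the statement, the hypothesis \eqref{eq:hardy2} therefore becomes $|F(\lambda)| \le C$ for every $\lambda \in \Lambda$. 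If instead the opposite identification is used, the same argument applies after the harmless replacement of $\Lambda$ by its complex conjugate: the involution $G \mapsto \overline{G(\bar\cdot)}$ is an isometry of $\ft_\pi(\C)$, so $\bar\Lambda$ is a set of stable sampling whenever $\Lambda$ is.

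With this reduction in hand, Theorem \ref{thm:Main2} immediately yields the conclusion that $F$ is constant: every set of stable sampling for $\ft_\pi(\C)$ is a Liouville set for $\ft_\pi(\C)$, and a function in $\ft_\pi(\C)$ that is bounded on a Liouville set must be constant by definition.

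It remains to identify the preimage of the constants under $\mathcal{B}$. A direct Gaussian integration shows that $\mathcal{G}(e^{-\pi x^2})(x,\omega)$ equals $\frac{1}{\sqrt{2}}\,e^{-\pi i \omega x} e^{-\frac{\pi}{2}(x^2+\omega^2)}$, which inserted into \eqref{eq:BargmannTransform} gives $\mathcal{B}(e^{-\pi x^2}) \equiv \frac{1}{\sqrt{2}}$. Since $\mathcal{B}: \lt \to \ft_\pi(\C)$ is a unitary bijection, the one-dimensional subspace of constant functions in $\ft_\pi(\C)$ is precisely the image of $\mathrm{span}\{e^{-\pi x^2}\}$. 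Hence $F \equiv c$ forces $f(x) = c' e^{-\pi x^2}$ for some $c' \in \C$, which is the desired conclusion.

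The only step of real substance is the passage from stable sampling to the Liouville property, which is already furnished by Theorem \ref{thm:Main2}; consequently I expect no genuine obstacle in carrying out the plan above. The remaining care is merely notational, namely keeping track of the weight $e^{\frac{\pi}{2}|z|^2}$ in \eqref{eq:BargmannTransform} that is designed to cancel precisely the Gaussian decay rate $e^{-\frac{\pi}{2}|\lambda|^2}$ appearing in \eqref{eq:hardy2}, so that the assumption collapses neatly into a plain boundedness condition on $F$.
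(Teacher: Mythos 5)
Your proof is correct and follows essentially the same route as the paper: translate \eqref{eq:hardy2} via the Bargmann transform into boundedness of $\mathcal{B}f$ on $\bar\Lambda$ (which is again a set of stable sampling), apply Theorem \ref{thm:Main2} to conclude $\mathcal{B}f$ is constant, and identify the preimage of the constants as the Gaussian. Your explicit handling of the conjugation coming from the identification $z \leftrightarrow (\re z, -\im z)$ matches the paper's passage to $\bar\Lambda$, and the Gaussian computation is a correct (slightly more detailed) version of the paper's closing step.
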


\begin{figure}
\centering
\hspace*{-0.4cm}
  \includegraphics[width=12.7cm]{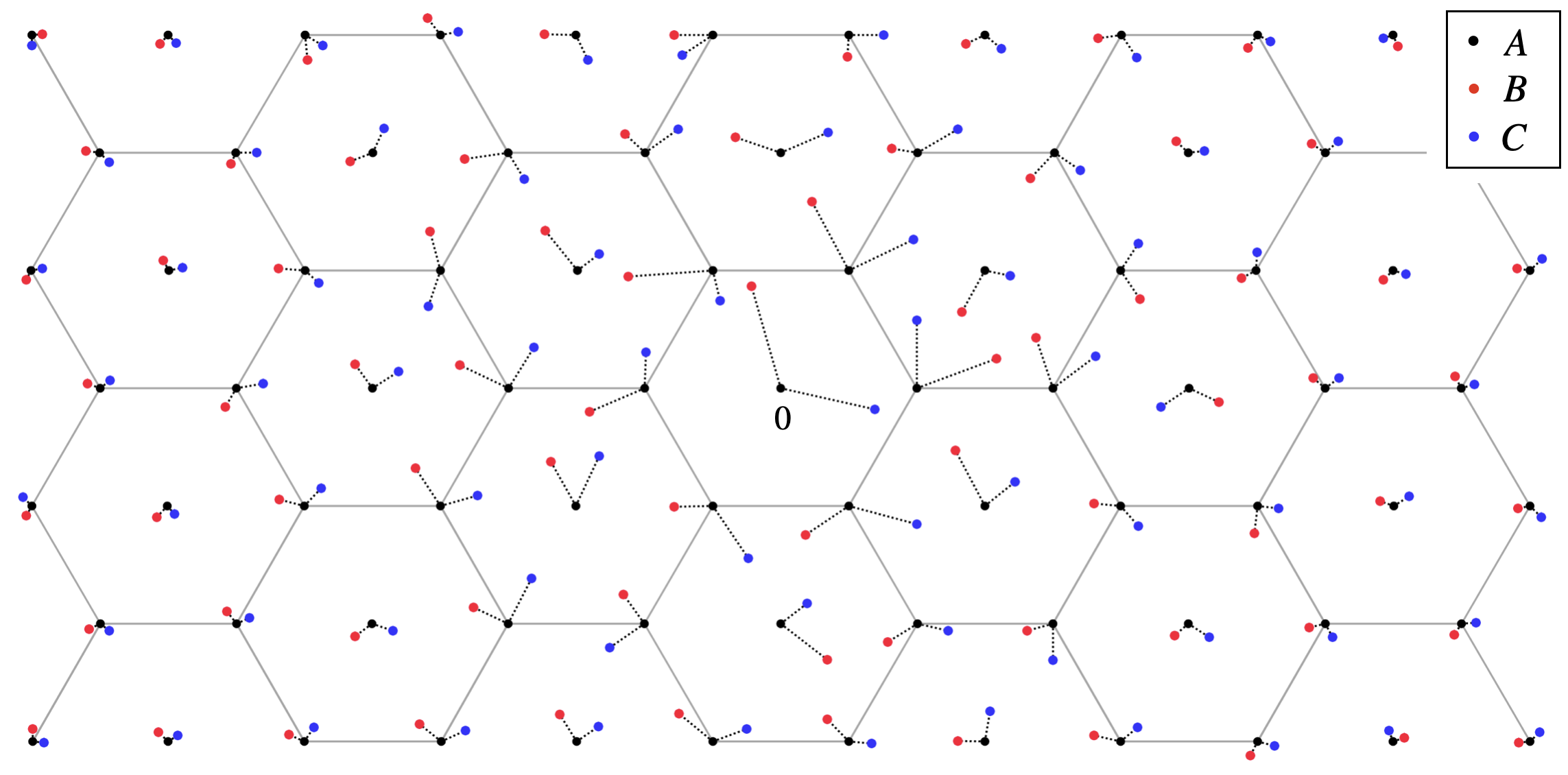}
\caption{Visualization of Theorem \ref{thm:Main1}: the sets $A,B$, and $C$ are uniformly noncollinear and $f$-close to a sufficiently dense (hexagonal) lattice $\Lambda$, which is a Liouville set for $\ft_\alpha(\C)$. For simplicity, we consider $A$ to be the lattice itself, i.e., $A = \Lambda$. Theorem \ref{thm:Main1} establishes the unique determination (up to a global phase) of every function in the Fock space by its absolute values located on the union $A \cup B \cup C$. However, it is important to note that sampling only on the lattice $A=\Lambda$ does not guarantee uniqueness.
}
\label{fig:1}
\end{figure}

\subsection{Random perturbation of Liouville sets}

We now turn our attention towards investigating whether probabilistic uniqueness statements are attainable. The concept of contemplating random perturbations of a Liouville set is rooted in the fact that one could contend that the condition \eqref{cond:anglesgeneral} outlined in Theorem \ref{thm:Main1} is rather generic, and it is possible that random perturbations might satisfy this condition almost surely. Indeed, the following can be said.

\begin{theorem}\label{thm:Main4}
Let $\alpha>0$, let $f:\C\to [0,\infty)$ be given by $f(z) = e^{-\gamma|z|^2}$, $\gamma>2\alpha$, and let $\Lambda\subseteq \C$ be a  Liouville set for $\ft_{4\alpha}(\C)$ of finite density.
If 
$$
Z_{\lambda,\ell}, \quad (\lambda,\ell)\in \Lambda \times \{1,2,3\}
$$
is a sequence of independent complex random variables, where each $Z_{\lambda,\ell}$ is uniformly distributed on the disk $B_{f(\lambda)}(\lambda)$, then $\mathcal{U}=\{Z_{\lambda,\ell} : (\lambda,\ell)\in \Lambda \times \{1,2,3\} \}$ is a uniqueness set for the phase retrieval problem in $\ft_\alpha(\C)$ almost surely.
\end{theorem}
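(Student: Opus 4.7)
The plan is to apply Theorem \ref{thm:Main1} to the random sequences $A=(Z_{\lambda,1})_{\lambda\in\Lambda}$, $B=(Z_{\lambda,2})_{\lambda\in\Lambda}$, $C=(Z_{\lambda,3})_{\lambda\in\Lambda}$. The $f$-closeness of each to $\Lambda$ (with constant $\kappa=1$) is automatic, since by construction every $Z_{\lambda,\ell}$ lies in $B_{f(\lambda)}(\lambda)$. Hence the entire task reduces to verifying that the angle condition \eqref{cond:anglesgeneral} holds almost surely.

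Fix $\beta\in(0,\gamma-2\alpha)$, which is possible since $\gamma>2\alpha$. The key step is a scale-invariant geometric probability estimate: there exists an absolute constant $C_0>0$ such that, for every disk $B_r(\zeta)\subseteq\C$ and every $\epsilon\in(0,\tfrac{\pi}{2})$, if $W_1,W_2,W_3$ are i.i.d.\ uniform on $B_r(\zeta)$, then
$$
\mathbb{P}\!\left[\varphi(W_1,W_2,W_3)<\epsilon\right]\leq C_0\,\epsilon.
$$
The argument is elementary: the event $\varphi<\epsilon$ forces at least two interior angles of the triangle to be smaller than $\epsilon$, so the vertex at the remaining (obtuse) angle lies within distance $h\leq 2r\sin\epsilon$ of the line spanned by the other two. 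Conditioning on two of the points and integrating over the strip of width $2h$ inside $B_r(\zeta)$, whose area is bounded by $4rh$, yields a conditional probability of order $h/r=O(\epsilon)$, and a union bound over which vertex is obtuse completes the estimate. The prefactor $r$ from the strip area cancels against the normalization $\pi r^2$ of the uniform law, reflecting the fact that $\varphi$ depends only on the shape, not the size, of the triangle.

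Applying this estimate with $r=f(\lambda)$ and threshold $\epsilon_\lambda\coloneqq L^{-1}|\lambda|e^{-\beta|\lambda|^2}$ for a constant $L>0$ gives
$$
\sum_{\lambda\in\Lambda}\mathbb{P}\!\left[\varphi(Z_{\lambda,1},Z_{\lambda,2},Z_{\lambda,3})<\epsilon_\lambda\right]\leq\frac{C_0}{L}\sum_{\lambda\in\Lambda}|\lambda|e^{-\beta|\lambda|^2}<\infty,
$$
where finiteness of the deterministic sum follows by comparison with the convergent integral $\int_\C|z|e^{-\beta|z|^2}\,\mathrm{d}A(z)$ via the finite density of $\Lambda$. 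The first Borel--Cantelli lemma then implies that almost surely only finitely many exceptional $\lambda$ satisfy $\varphi(Z_{\lambda,1},Z_{\lambda,2},Z_{\lambda,3})<\epsilon_\lambda$. Since the event that three independent uniform points are exactly collinear has probability zero, on a probability-one event every ratio $|\lambda|e^{-\beta|\lambda|^2}/\varphi(Z_{\lambda,1},Z_{\lambda,2},Z_{\lambda,3})$ is finite, and I enlarge $L$ to a realization-dependent value that absorbs the finitely many exceptional indices, producing \eqref{cond:anglesgeneral} for all $\lambda\in\Lambda$. Theorem \ref{thm:Main1} then delivers the conclusion.

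The principal obstacle is securing the scale-invariance of the probability bound. Because the radii $f(\lambda)=e^{-\gamma|\lambda|^2}$ shrink doubly exponentially in $|\lambda|$, any residual $r$-dependence in the estimate for $\mathbb{P}[\varphi<\epsilon]$ would either destroy the Borel--Cantelli summability or force $\epsilon_\lambda$ to be impractically small. Fortunately, the invariance of $\varphi$ under affine similarities yields the uniform $O(\epsilon)$ bound across all scales, and the threshold $|\lambda|e^{-\beta|\lambda|^2}$ then sits precisely at the boundary of summability determined by the finite density of $\Lambda$.
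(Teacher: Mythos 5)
Your proposal is correct and follows essentially the same route as the paper: reduce to verifying the angle condition \eqref{cond:anglesgeneral} almost surely, and establish it via the scale-invariant estimate $\mathbb{P}[\varphi(W_1,W_2,W_3)<\epsilon]\le C_0\,\epsilon$ for three i.i.d.\ uniform points in a disk (the paper's Lemma \ref{lem:3randompts}) combined with the summability of $\sum_{\lambda\in\Lambda}|\lambda|e^{-\beta|\lambda|^2}$, which follows from finite density. The only cosmetic difference is the final bookkeeping: you invoke Borel--Cantelli, almost-sure noncollinearity, and a realization-dependent $L$ to absorb finitely many exceptional indices, whereas the paper bounds $\mathbb{P}[E_L]$ directly by a union bound and lets $L\to\infty$; the two arguments are equivalent.
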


\begin{figure}
\centering
\hspace*{-0.4cm}
  \includegraphics[width=10cm]{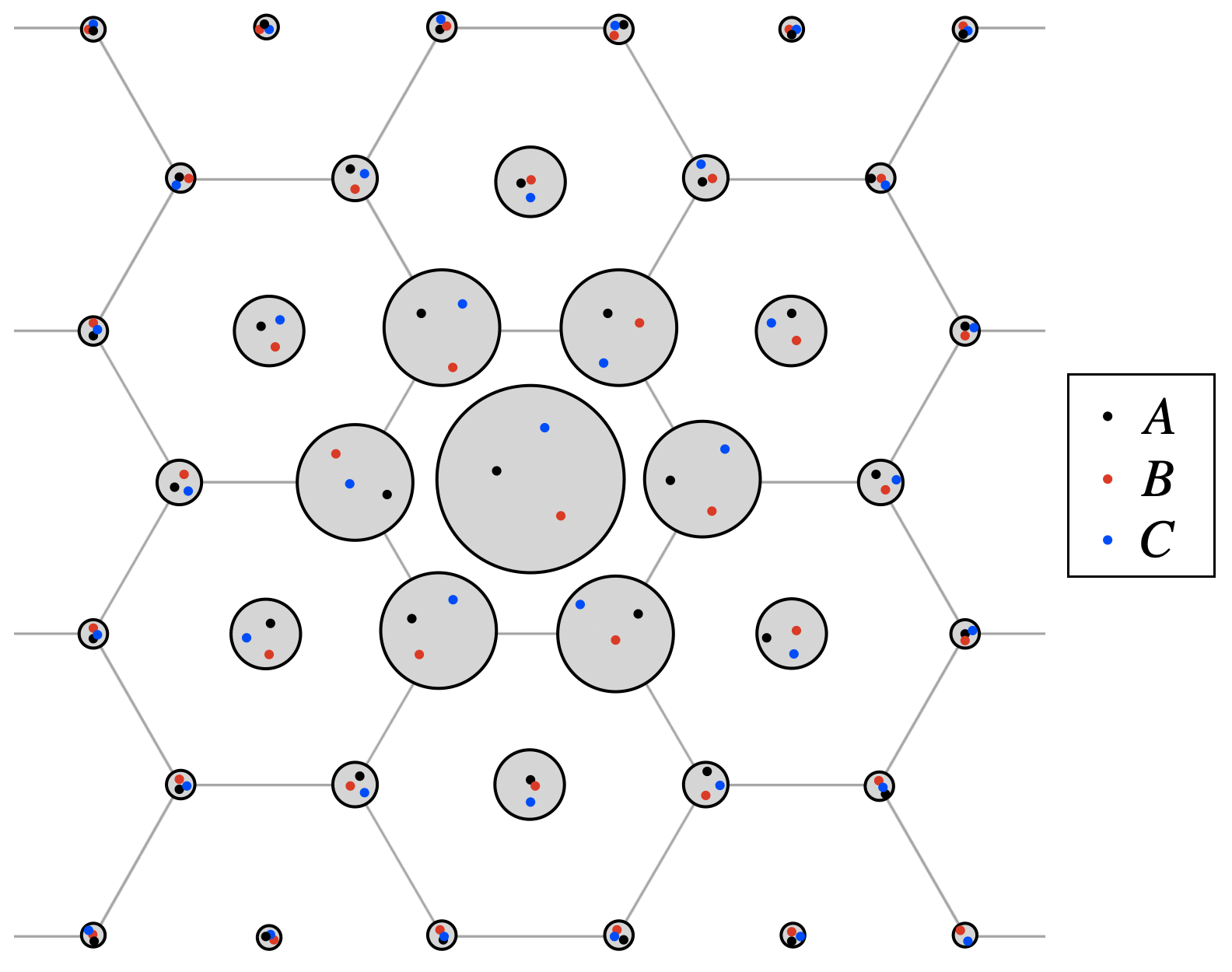}
\caption{Visualization of Theorem \ref{thm:Main4}: the grey disks represent neighborhoods around points $\lambda$ of a sufficiently dense (hexagonal) lattice $\Lambda$. From each disk, three points are selected at random, resulting in three sets $A,B$, and $C$. With probability 1, the union $A \cup B \cup C$ forms a uniqueness set for the phase retrieval problem in Fock space.}
\label{fig:2}
\end{figure}

\subsection{Gabor phase retrieval}\label{sec:resultsrandomgabor}

In the remainder of this section, we identify $\R^2$ with the complex plane $\C$. All definitions introduced earlier, naturally transfer to $\R^2$ without further explanations. Throughout the present section, the function $f:\R^2\to [0,\infty)$ is defined by $f(z) = e^{-\gamma|z|^2}$ with $\gamma>2\pi$. We recall that $\mathcal{U} \subseteq \R^2$ is considered a uniqueness set for the Gabor phase retrieval problem in $X \subseteq \lt$, if the implication \eqref{eq:def_uniqueness_gabor} is satisfied. We begin with an immediate consequence of Theorem \ref{thm:Main4}.

\begin{theorem}\label{thm:Main5}
Let $\Lambda \subseteq \R^2$ be a lattice satisfying $D(\Lambda) > 4$. Suppose that
$$
Z_{\lambda,\ell}, \quad (\lambda,\ell)\in \Lambda \times \{1,2,3\}
$$
is a sequence of independent ($\, \R^2$-valued) random variables, where each $Z_{\lambda,\ell}$ is uniformly distributed on the disk $B_{f(\lambda)}(\lambda)$. 
Then $\mathcal{U}=\{Z_{\lambda,\ell} : (\lambda,\ell)\in \Lambda \times \{1,2,3\}\}$ 
is a uniqueness set for the Gabor phase retrieval problem in $L^2(\R)$ almost surely.
\end{theorem}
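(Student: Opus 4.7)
The plan is to derive Theorem \ref{thm:Main5} as a direct corollary of Theorem \ref{thm:Main4} (with $\alpha = \pi$) by transferring the conclusion from $\ft_\pi(\C)$ to $\lt$ via the Bargmann transform. First I would verify that the hypotheses of Theorem \ref{thm:Main4} are satisfied: the function $f(z) = e^{-\gamma|z|^2}$ has $\gamma > 2\pi = 2\alpha$ by assumption, and the lattice $\Lambda$ automatically has finite density. The remaining requirement is that $\Lambda$ is a Liouville set for $\ft_{4\pi}(\C)$. By Theorem \ref{thm:Main3} this is equivalent to $s(\Lambda) < \pi/(4\pi) = 1/4$, which, via $D(\Lambda) = 1/s(\Lambda)$, is exactly the assumed inequality $D(\Lambda) > 4$.

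With the hypotheses checked, Theorem \ref{thm:Main4} yields that the random set $\mathcal{U} \subseteq \C$ is almost surely a uniqueness set for the phase retrieval problem in $\ft_\pi(\C)$. To conclude the proof I would transfer this statement from Fock space to $\lt$ using the Bargmann transform. The definition \eqref{eq:BargmannTransform} gives the pointwise identity
$$
|\mathcal{B}f(z)| \, = \, |\mathcal{G}f(\re(z),-\im(z))| \cdot e^{\frac{\pi}{2}|z|^2}, \qquad z \in \C,
$$
so $|\mathcal{B}f(z)| = |\mathcal{B}h(z)|$ on a set $\mathcal{U} \subseteq \C$ if and only if $|\mathcal{G}f|$ and $|\mathcal{G}h|$ agree on the reflected set $R(\mathcal{U}) \coloneqq \{(\re z,-\im z): z \in \mathcal{U}\} \subseteq \R^2$. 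Since $\mathcal{B}$ is a unitary bijection from $\lt$ onto $\ft_\pi(\C)$ which commutes with multiplication by $\tau \in \T$, it intertwines the two equivalence relations $\sim$, and hence maps uniqueness sets for Fock-space phase retrieval onto uniqueness sets for Gabor phase retrieval in $\lt$ via the reflection $R$.

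It remains to observe that the reflection $R$ is compatible with the probabilistic construction. Explicitly, $R$ is an isometry of $\R^2 \simeq \C$ that sends lattices to lattices of the same density and preserves each disk $B_{f(\lambda)}(\lambda)$ up to the corresponding reflected center, while the uniform distribution on such a disk is invariant under $R$ because it is rotationally symmetric. Consequently the law of the random set produced directly in $\R^2$ coincides with the law of $R$ applied to the random set constructed in $\C$ from the reflected lattice $R(\Lambda)$. The almost-sure conclusion of Theorem \ref{thm:Main4} therefore transfers without loss to $\mathcal{U} \subseteq \R^2$ being a uniqueness set for the Gabor phase retrieval problem in $\lt$.

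I do not anticipate a serious obstacle: the statement is essentially a translation between the two equivalent formulations. The one point that deserves care, and which I would spell out in the formal write-up, is the measure-theoretic verification that $R$ respects independence and the uniform-on-disk distributions, so that the almost-sure event on the Fock-space side and the almost-sure event for the Gabor-side statement genuinely coincide.
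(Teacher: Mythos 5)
Your proposal is correct and follows exactly the route the paper intends: the paper presents Theorem \ref{thm:Main5} as an immediate consequence of Theorem \ref{thm:Main4} (with $\alpha=\pi$, using $D(\Lambda)>4 \iff s(\Lambda)<\frac{\pi}{4\pi}$ and Theorem \ref{thm:Main3}), transferred to $L^2(\R)$ via the unitarity of the Bargmann transform. Your additional care with the reflection $z\mapsto(\re z,-\im z)$ and its compatibility with the lattice, the disks, and the uniform distributions is a detail the paper elides but is handled correctly.
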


We proceed with the introduction of a class of special lattices and define
$$
\mathcal{L}:= \left\{\begin{pmatrix}
    p &0\\
    0 &q
\end{pmatrix}
\Z^2: \,p,q\in \R\setminus\{0\}
\right\}
\cup 
\left\{\begin{pmatrix}
    p &p\\
    q &-q
\end{pmatrix}
\Z^2: \,p,q\in \R\setminus\{0\}
\right\}.
$$
Such lattices are precisely the ones which are invariant with respect to reflection across either of the two coordinate axes. The class $\mathcal{L}$ contains the important class of separable lattices. It is important to emphasize, that lattices that belong to $\mathcal{L}$ do not form uniqueness sets for the Gabor phase retrieval problem in $L^2(\R,\R)$, the space of real-valued functions in $\lt$. That is, for each $\Lambda \in \mathcal{L}$, there exist two functions $f,h \in L^2(\R,\R)$, satisfying the two conditions
$$
|\mathcal{G}f(z)| = |\mathcal{G}h(z)|, \quad z \in \Lambda, \quad \mathrm{and} \quad  f \not \sim h,
$$
see the proof of \cite[Theorem 3.13]{grohsLiehr3}. Clearly, since $L^2(\R,\R) \subseteq \lt$, Theorem \ref{thm:Main1} is applicable and establishes uniqueness in $L^2(\R,\R)$ through the use of three perturbed lattices. However, the space $L^2(\R,\R)$ is significantly smaller in comparison to $\lt$, leading one to anticipate that a reduced sampling set would suffice for uniqueness. Indeed, the following theorem asserts that the union of two random perturbations of a lattice in $\mathcal{L}$ is adequate for achieving uniqueness.

\begin{theorem}\label{thm:Main6}
Suppose that $\Lambda \in \mathcal{L}$ satisfies $D(\Lambda)>4$. Let
$$
Z_{\lambda,\ell}, \quad (\lambda,\ell)\in \Lambda \times \{1,2\}
$$
be a sequence of independent ($\, \R^2$-valued) random variables, where each $Z_{\lambda,\ell}$ is uniformly distributed on the disk $B_{f(\lambda)}(\lambda)$. Then $\mathcal{U}=\{Z_{\lambda,\ell} : (\lambda,\ell)\in \Lambda \times \{1,2\}\}$ is a uniqueness set for Gabor phase retrieval in $L^2(\R,\R)$ almost surely. 
\end{theorem}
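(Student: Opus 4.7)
The plan is to reduce Theorem~\ref{thm:Main6} to the three-perturbation statement Theorem~\ref{thm:Main4}, exploiting the conjugation symmetry of the Gabor transform of real-valued functions. For any $g\in L^2(\R,\R)$ one has $\overline{\mathcal G g(x,\omega)}=\mathcal G g(x,-\omega)$; under the Bargmann transform this translates, for $F=\mathcal{B}g\in\ft_\pi(\C)$, into the reflection identity $F(\bar z)=\overline{F(z)}$, and in particular $|F(z)|=|F(\bar z)|$. Thus if $|\mathcal G f|=|\mathcal G h|$ on $\mathcal{U}$ for $f,h\in L^2(\R,\R)$, then after identifying $\R^2\simeq \C$ so that the reflection $(x,\omega)\mapsto(x,-\omega)$ corresponds to $z\mapsto\bar z$, we obtain $|F|=|H|$ on the doubled set $\mathcal V\coloneqq\mathcal{U}\cup\bar{\mathcal{U}}$, where $F=\mathcal Bf$ and $H=\mathcal Bh$. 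The hypothesis $\Lambda\in\mathcal{L}$ ensures $\bar\Lambda=\Lambda$, and by radiality of $f$ the set $\mathcal V$ organizes itself into four perturbations of the same lattice: around each $\mu\in\Lambda$, $\mathcal V$ contains the points $Z_{\mu,1},Z_{\mu,2},\bar Z_{\bar\mu,1},\bar Z_{\bar\mu,2}$, each uniformly distributed on $B_{f(\mu)}(\mu)$. I would select three of them to form the perturbations $A=(Z_{\mu,1})_\mu$, $B=(Z_{\mu,2})_\mu$, $C=(\bar Z_{\bar\mu,1})_\mu$.

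The remaining task is to verify the angle condition~\eqref{cond:anglesgeneral} for $(A,B,C)$ almost surely, after which Theorem~\ref{thm:Main1} concludes the argument. For $\mu\in\Lambda$ with $\mu\notin\R$ the three samples $Z_{\mu,1},Z_{\mu,2},\bar Z_{\bar\mu,1}$ are mutually independent and uniformly distributed on $B_{f(\mu)}(\mu)$, so the probabilistic estimate underlying Theorem~\ref{thm:Main4} applies verbatim. The delicate case is $\mu\in\R\cap\Lambda$, where $\bar\mu=\mu$ and the pair $Z_{\mu,1},\bar Z_{\mu,1}$ is deterministically linked: these two points necessarily share the same real part, which builds in a vertical-collinearity constraint. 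After translating by $-\mu$ and rescaling by $f(\mu)$, the problem reduces to a universal statement about the triangle with vertices $Z',\bar{Z'},W'$, with $Z',W'$ independent uniform on the unit disk. A direct application of the law of cosines expresses the middle angle as a quantity comparable to the horizontal displacement $|\re Z'-\re W'|$, whose density is bounded on $[-2,2]$, so that
\[
\mathbb P\bigl(\varphi(Z_{\mu,1},Z_{\mu,2},\bar Z_{\mu,1})<\epsilon\bigr)\le C\epsilon
\]
uniformly in $\mu\in\R\cap\Lambda$. Since $\Lambda$ has finite density, $\sum_{\mu\in\Lambda}|\mu|e^{-\beta|\mu|^2}<\infty$, and the Borel--Cantelli lemma forces~\eqref{cond:anglesgeneral} to hold for all but finitely many $\mu$ almost surely; the exceptional finite set can be absorbed into the constant $L$.

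Finally, the identification $D(\Lambda)>4\iff s(\Lambda)<\pi/(4\pi)$ combined with Theorem~\ref{thm:Main3} shows that $\Lambda$ is a Liouville set for $\ft_{4\pi}(\C)$, so Theorem~\ref{thm:Main1} delivers $F=\tau H$ for some $\tau\in\T$. Unitarity of $\mathcal{B}$ yields $f=\tau h$, and real-valuedness of $f,h$ forces $\tau\in\{\pm 1\}$, whence $f\sim h$ in $L^2(\R,\R)$. The main obstacle is precisely the real-axis case: because $Z_{\mu,1}$ and $\bar Z_{\mu,1}$ are not independent one cannot simply invoke Theorem~\ref{thm:Main4} as a black box, and the quantitative bound on the middle angle must be extracted by hand from the geometry of the conjugate pair, with care taken to ensure that the constant $C$ in the probability estimate does not depend on $\mu$.
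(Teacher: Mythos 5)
Your proposal is correct and follows essentially the same route as the paper: the paper likewise doubles the sampling set via the conjugation symmetry of $\mathcal{S}^*$, takes the three perturbations $Z_{\lambda,1}$, $Z_{\lambda,2}$, $\overline{Z_{\bar\lambda,1}}$, and isolates the real-axis points, where the required estimate $\mathbb{P}[\varphi(Z,W,\overline{Z})<\varepsilon]\le 4\varepsilon$ is exactly Lemma \ref{lem:2randompts}. The only cosmetic differences are that the paper indexes the triples over $\Lambda\cap\overline{\mathbb{H}_+}$ and routes through the $\nicefrac12$-Liouville Proposition \ref{prop:1214}(1) rather than applying Theorem \ref{thm:Main1} to the full lattice directly, and that it uses continuity of measure in place of your Borel--Cantelli argument.
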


If we further restrict the problem to the space $L^2_e(\R,\R)$ of even real-valued functions, then a single perturbation suffices, and the uniqueness set becomes separated.

\begin{theorem}\label{thm:Main7}
Suppose that $\Lambda \in \mathcal{L}$ satisfies $D(\Lambda) > 4$. Let
$$
Z_{\lambda}, \quad \lambda \in \Lambda
$$
be a sequence of independent ($\, \R^2$-valued) random variables, where each $Z_{\lambda}$ is uniformly distributed on the disk $B_{f(\lambda)}(\lambda)$. Then $\mathcal{U}=\{Z_{\lambda} : \lambda\in \Lambda\}$ is separated and a uniqueness set for Gabor phase retrieval in $L_e^2(\R,\R)$ almost surely. 
\end{theorem}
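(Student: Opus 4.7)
The plan is to leverage the reflection symmetries enjoyed by elements of $L^2_e(\R,\R)$ in order to reduce a single random perturbation to the three perturbations required by Theorem~\ref{thm:Main1}, and then to verify the required noncollinearity condition via a Borel--Cantelli argument. For $f \in L^2_e(\R,\R)$, realness and evenness of $f$ together give
\[
|\mathcal{G}f(x,\omega)| = |\mathcal{G}f(x,-\omega)| = |\mathcal{G}f(-x,\omega)| = |\mathcal{G}f(-x,-\omega)|,
\]
so that $|\mathcal{G}f|$ is invariant under the three nontrivial reflections $R_1 z = -z$, $R_2 z = \overline{z}$, and $R_3 z = -\overline{z}$ of $\R^2 \simeq \C$. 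Since $\Lambda \in \mathcal{L}$ is invariant under both coordinate-axis reflections, the images $-\Lambda$ and $\overline{\Lambda}$ coincide with $\Lambda$, so the three sets $A := \mathcal{U}$, $B := -\mathcal{U}$, $C := \overline{\mathcal{U}}$ can each be reindexed by $\Lambda$ via
\[
a_\mu := Z_\mu,\quad b_\mu := -Z_{-\mu},\quad c_\mu := \overline{Z_{\overline{\mu}}},\qquad \mu \in \Lambda,
\]
each satisfying the closeness bound with constant $\kappa = 1$. The symmetries force equality of $|\mathcal{G}f|$ and $|\mathcal{G}h|$ on $A \cup B \cup C$ from equality on $\mathcal{U}$ alone.

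By Theorem~\ref{thm:Main3}, the hypothesis $D(\Lambda) > 4$ implies that $\Lambda$ is a Liouville set for $\ft_{4\pi}(\C)$, so (transporting to the Gabor setting via the Bargmann isomorphism with $\alpha=\pi$) all hypotheses of Theorem~\ref{thm:Main1} are in place once the angle condition (\ref{cond:anglesgeneral}) is verified. Fix $\beta \in (0, \gamma - 2\pi)$. The main technical step is to show that almost surely there exists $L > 0$ with
\[
\varphi(a_\mu, b_\mu, c_\mu) \;\ge\; |\mu| e^{-\beta|\mu|^2}/L,\qquad \mu \in \Lambda.
\]
Writing $r_\mu := e^{-\gamma|\mu|^2}$, the three points $a_\mu, b_\mu, c_\mu$ all lie in $B_{r_\mu}(\mu)$, so the elementary bound $\sin\varphi \ge 2\,\mathrm{Area}/(2r_\mu)^2$ reduces the angle estimate to a lower bound on the area of the triangle with vertices $a_\mu, b_\mu, c_\mu$.

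This area estimate is the main obstacle, and it has to be carried out in two regimes. For $\mu$ off the coordinate axes, the lattice points $\mu,-\mu,\overline{\mu}$ are distinct, so $a_\mu, b_\mu, c_\mu$ are three independent uniform random variables on $B_{r_\mu}(\mu)$, and scaling to the unit disk yields $\Pr(\varphi(a_\mu,b_\mu,c_\mu) < \theta) \le C\theta$. The delicate case is when $\mu$ sits on one of the axes, in which case only the two independent draws $Z_\mu, Z_{-\mu}$ enter the triangle. For $\mu \in \R \setminus \{0\}$, translation by $-\mu$ reduces the vertices to $\zeta_1, \overline{\zeta_1}, -\zeta_2$ with $\zeta_1, \zeta_2$ independent uniform on $B_{r_\mu}(0)$, so that $\mathrm{Area} = |\im\,\zeta_1| \cdot |\re(\zeta_1+\zeta_2)|$; a direct product-tail estimate yields $\Pr(\varphi(a_\mu,b_\mu,c_\mu) < \theta) \le C\theta\log(1/\theta)$, and the imaginary-axis case is analogous. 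The origin $\mu = 0$ is vacuous because of the factor $|\mu|$. Substituting $\theta = |\mu|e^{-\beta|\mu|^2}/L$ produces failure probabilities that decay like $|\mu|^3 e^{-\beta|\mu|^2}$, which is summable over the lattice $\Lambda$; Borel--Cantelli then delivers the angle bound for all but finitely many $\mu$, and $L$ may be enlarged to absorb the finite exceptional set, on which the three points are almost surely noncollinear.

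Finally, separation of $\mathcal{U}$ is straightforward: the separation constant $\delta_\Lambda > 0$ of $\Lambda$ together with the Gaussian decay of $f$ ensures that the disks $B_{f(\lambda)}(\lambda)$ are pairwise separated by at least $\delta_\Lambda/2$ for all but finitely many pairs, while on the remaining finite set the absolutely continuous random variables $Z_\lambda$ are almost surely distinct and thus yield a positive minimum distance. Combining the almost-sure separation with the verified angle bound and invoking Theorem~\ref{thm:Main1} completes the proof.
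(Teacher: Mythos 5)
Your proof is correct, and the overall strategy coincides with the paper's: exploit the invariance of $|\mathcal{G}f|$ under the three reflections $z\mapsto -z,\ \bar z,\ -\bar z$ to turn the single perturbation $\mathcal{U}$ into three $f$-close sequences, feed these into Theorem \ref{thm:Main1}, and verify the angle condition \eqref{cond:anglesgeneral} probabilistically, treating the axis points (where only two independent draws enter the triangle) separately. The implementation differs in two respects. First, you index the three sequences $a_\mu=Z_\mu$, $b_\mu=-Z_{-\mu}$, $c_\mu=\overline{Z_{\bar\mu}}$ by the \emph{full} lattice and invoke Theorem \ref{thm:Main1} directly, whereas the paper restricts to the first-quadrant portion $\Lambda_0$, observes that it is a $\nicefrac14$-Liouville set, and routes through Proposition \ref{prop:1214}(2); your version effectively inlines that proposition, at the harmless cost of assigning four (dependent) triangles to each symmetry orbit instead of one. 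Second, you re-derive the angle tail bounds from a triangle-area estimate, obtaining $C\theta$ off the axes and $C\theta\log(1/\theta)$ on them, while the paper's Lemmas \ref{lem:3randompts} and \ref{lem:2randompts} give the cleaner uniform bound $4\theta$ via a strip argument; both are summable against $|\mu|e^{-\beta|\mu|^2}$, and your Borel--Cantelli step (with the a.s.\ finite exceptional set absorbed into $L$, noting that \eqref{cond:anglesgeneral} only requires \emph{some} finite $L$) is equivalent to the paper's union bound on the events $E_L$. A small bonus of your write-up is that you address the separateness claim explicitly, which the paper's proof of this theorem leaves implicit.
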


A further objective is to identify the
range of densities (depending on the function class $X\subseteq \lt$ considered) such that uniqueness sets for the given class exist.
In the randomized setting of Theorem \ref{thm:Main6} and Theorem \ref{thm:Main7}, we end up with uniqueness sets of density $> 8$ and $> 4$, respectively. The densities of the respective sets can be arbitrarily close to the lower bounds.
As it turns out, the density can be further pushed down by employing a more elaborate strategy to select the sampling points. Specifically, we have the following result.

\begin{theorem}\label{thm:density_lower_bound}
Let $j \in \{ 1,2,3 \}$. For every $d > d_j$, there exists a uniformly distributed uniqueness set $\mathcal{U}_j$ for the Gabor phase retrieval problem in $X_j$ having uniform density $D (\mathcal{U}_j) = d$, provided that
\begin{enumerate}
    \item $d_1=12$ and $X_1 = \lt$,
    \item $d_2 = 6$ and $X_2 = L^2(\R,\R)$,
    \item $d_3 = 3$ and $X_3 = L^2_e(\R,\R)$.
\end{enumerate}
For $X_3 = L^2_e(\R,\R)$, the uniqueness set $\mathcal{U}_3$ can be chosen to be separated.
\end{theorem}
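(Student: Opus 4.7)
My plan is to treat the three cases in order of increasing symmetry exploitation. For $(1)$, given $d > 12$ I apply Theorem~\ref{thm:Main5} to a lattice $\Lambda$ with $D(\Lambda) = d/3 > 4$: the three random perturbations yield an a.s.\ uniqueness set of density $3D(\Lambda) = d$, uniformly distributed by a standard Hoeffding-plus-net argument.

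For $(2)$ I exploit the reflection symmetry $|\mathcal{G}f(x,\omega)| = |\mathcal{G}f(x,-\omega)|$ valid for $f \in L^2(\R,\R)$, equivalent to $\overline{\mathcal{G}f(x,\omega)} = \mathcal{G}f(x,-\omega)$. Write $\mathcal{U}^* := \{(x,-\omega) : (x,\omega) \in \mathcal{U}\}$. Given $d > 6$, I choose $\Lambda \in \mathcal{L}$ with $D(\Lambda) = 2d/3 > 4$ and partition $\Lambda$ (up to a measure-zero axis part) into reflection pairs $\{\lambda,\lambda^*\}$. For each pair I flip an independent fair coin $\epsilon_\lambda$: if $\epsilon_\lambda = +$ I place three i.i.d.\ uniform perturbations in $B_{f(\lambda)}(\lambda)$; if $\epsilon_\lambda = -$, in $B_{f(\lambda)}(\lambda^*)$. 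Whichever value $\epsilon_\lambda$ takes, the symmetrised union $\mathcal{U}_2 \cup \mathcal{U}_2^*$ collects three i.i.d.\ uniform perturbations around both $\lambda$ and $\lambda^*$, so Theorem~\ref{thm:Main1} makes $\mathcal{U}_2 \cup \mathcal{U}_2^*$ a uniqueness set in $\lt$ almost surely. The reflection symmetry propagates matching $|\mathcal{G}f| = |\mathcal{G}h|$ from $\mathcal{U}_2$ to $\mathcal{U}_2 \cup \mathcal{U}_2^*$, forcing $f = \pm h$. Concentration of the coin flips delivers uniform distribution with $D(\mathcal{U}_2) = 3D(\Lambda)/2 = d$.

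For $(3)$ the full group $G := \Z_2 \times \Z_2$ acts on $|\mathcal{G}f|$ via $|\mathcal{G}f(\varepsilon_1 x,\varepsilon_2 \omega)| = |\mathcal{G}f(x,\omega)|$ for $f \in L^2_e(\R,\R)$. Given $d > 3$, pick $\Lambda \in \mathcal{L}$ with $D(\Lambda) = 4d/3 > 4$, decompose $\Lambda$ into $G$-orbits $O = \{\lambda_1,\ldots,\lambda_4\}$ (generically of size four), and for each $O$ draw a uniformly random index $i_O \in \{1,\ldots,4\}$ to omit. Place a single i.i.d.\ uniform perturbation $Z_{\lambda_j} \in B_{f(\lambda_j)}(\lambda_j)$ for each $j \neq i_O$; the union is $\mathcal{U}_3$. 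By transitivity of $G$ on $O$, the symmetrised set $\mathcal{U}_3^{\mathrm{sym}} := \bigcup_{g \in G} g\mathcal{U}_3$ contains, near every $\lambda \in \Lambda$, exactly three perturbations, contributed by the three group elements that do not send $\lambda_{i_O}$ to $\lambda$. Isometry invariance together with $f(g\lambda_j) = f(\lambda_j)$ makes these three points i.i.d.\ uniform on $B_{f(\lambda)}(\lambda)$ and a.s.\ noncollinear, so Theorem~\ref{thm:Main1} yields uniqueness of $\mathcal{U}_3^{\mathrm{sym}}$ in $\lt$, and the four-fold symmetry reduces this to uniqueness of $\mathcal{U}_3$ in $L^2_e(\R,\R)$. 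The density is $D(\mathcal{U}_3) = 3D(\Lambda)/4 = d$ with uniform distribution by a law of large numbers over the orbit choices; separation follows from the rapid decay of $f$, since outside a compact region the balls $B_{f(\lambda)}(\lambda)$ are pairwise disjoint at distance at least half the lattice spacing, while only finitely many samples interact inside.

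The main obstacle is the symmetry bookkeeping in $(2)$ and $(3)$: one must verify that $G$-symmetrisation deposits exactly three $f$-close, noncollinear perturbations around each $\lambda \in \Lambda$. This rests on transitivity of $G$ on each orbit and on reflections preserving the joint distribution of independent uniform samples on matched disks. The angle condition of Theorem~\ref{thm:Main1} then holds a.s.\ by a Borel--Cantelli argument applied to the middle-angle distribution of three i.i.d.\ points in a disk, exploiting the convergence of $\sum_{\lambda} |\lambda| e^{-\beta|\lambda|^2}$.
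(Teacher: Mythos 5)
Your part (1) is essentially the paper's argument (the paper invokes the deterministic Theorem \ref{thm:Main1} rather than Theorem \ref{thm:Main5}, but either works; note that uniform distribution of the perturbed set already follows from $f$-closeness to three disjoint copies of $\Lambda$, so no concentration argument is needed there). Your symmetrization mechanism in (2) and (3) --- propagating $|\mathcal{G}f|=|\mathcal{G}h|$ from $\mathcal{U}$ to $\mathcal{U}\cup\overline{\mathcal{U}}$, resp.\ to the full four-fold orbit, and then applying Theorem \ref{thm:Main1} to the symmetrized set --- is precisely the content of the paper's Proposition \ref{prop:1214}, and your a.s.\ verification of the angle condition via the middle-angle lemmas and a union bound over $\Lambda$ mirrors the proofs of Theorems \ref{thm:Main6} and \ref{thm:Main7}.

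The genuine gap is the claim that $\mathcal{U}_2$ and $\mathcal{U}_3$ are uniformly distributed. In (2) you decide by an independent fair coin, for each reflection pair $\{\lambda,\overline{\lambda}\}$, whether the triple of samples sits near $\lambda$ or near $\overline{\lambda}$. These two points are at distance $2|\im(\lambda)|$, which is unbounded, so a ball $B_r(z)$ placed deep in the upper half plane meets exactly one member of each of roughly $N\approx D(\Lambda)\pi r^2$ pairs and receives $3$ points from it with probability $\tfrac12$, independently across pairs; thus $\#(\mathcal{U}_2\cap B_r(z))$ is $3$ times a $\mathrm{Binomial}(N,\tfrac12)$ variable. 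Hoeffding controls one ball, but uniform density requires the error to be $o(r^2)$ \emph{uniformly in $z$}. For each fixed $r$ there are infinitely many disjoint such balls; the events ``all $N$ coins in $B_r(z_k)$ come up $+$'' are independent with probability $2^{-N}>0$, so by the second Borel--Cantelli lemma infinitely many occur almost surely, and in those balls the count is $\approx 3N$ instead of $\approx \tfrac{3N}{2}$ --- a deviation of order $r^2$. Hence $\mathcal{U}_2$ almost surely has no uniform density at all; the same argument defeats the random orbit-omission in (3). Since the theorem asserts exact uniform density $d$, this cannot be absorbed into an $o(r^2)$ term. The repair is to make the choice of representatives deterministic and spatially balanced: the paper takes for (2) the checkerboard sublattice $\Lambda'=\Gamma'+\tfrac{v}{2}i$ with $\Lambda'\sqcup\overline{\Lambda'}=\Lambda$, a genuine lattice of density $\tfrac12 D(\Lambda)$ and hence uniformly distributed, and places all three perturbed copies near $\Lambda'$; for (3) it uses an interleaved-columns set $\Lambda'=Q_1\sqcup Q_2\sqcup Q_3\sqcup Q_4$ with $\Lambda=\Lambda'\cup\overline{\Lambda'}\cup(-\Lambda')\cup(-\overline{\Lambda'})$ and $D(\Lambda')=\tfrac14 D(\Lambda)$, distributing the three perturbed copies as $\overline{A}\cup(-B)\cup(-\overline{C})$ so that the union is $f$-close to $\Lambda\setminus\Lambda'$, which yields both the exact density $\tfrac34 D(\Lambda)$ and the separateness that your construction would still need to secure once the randomness is removed.
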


\begin{figure}
\centering
\hspace*{-0.4cm}
  \includegraphics[width=12.7cm]{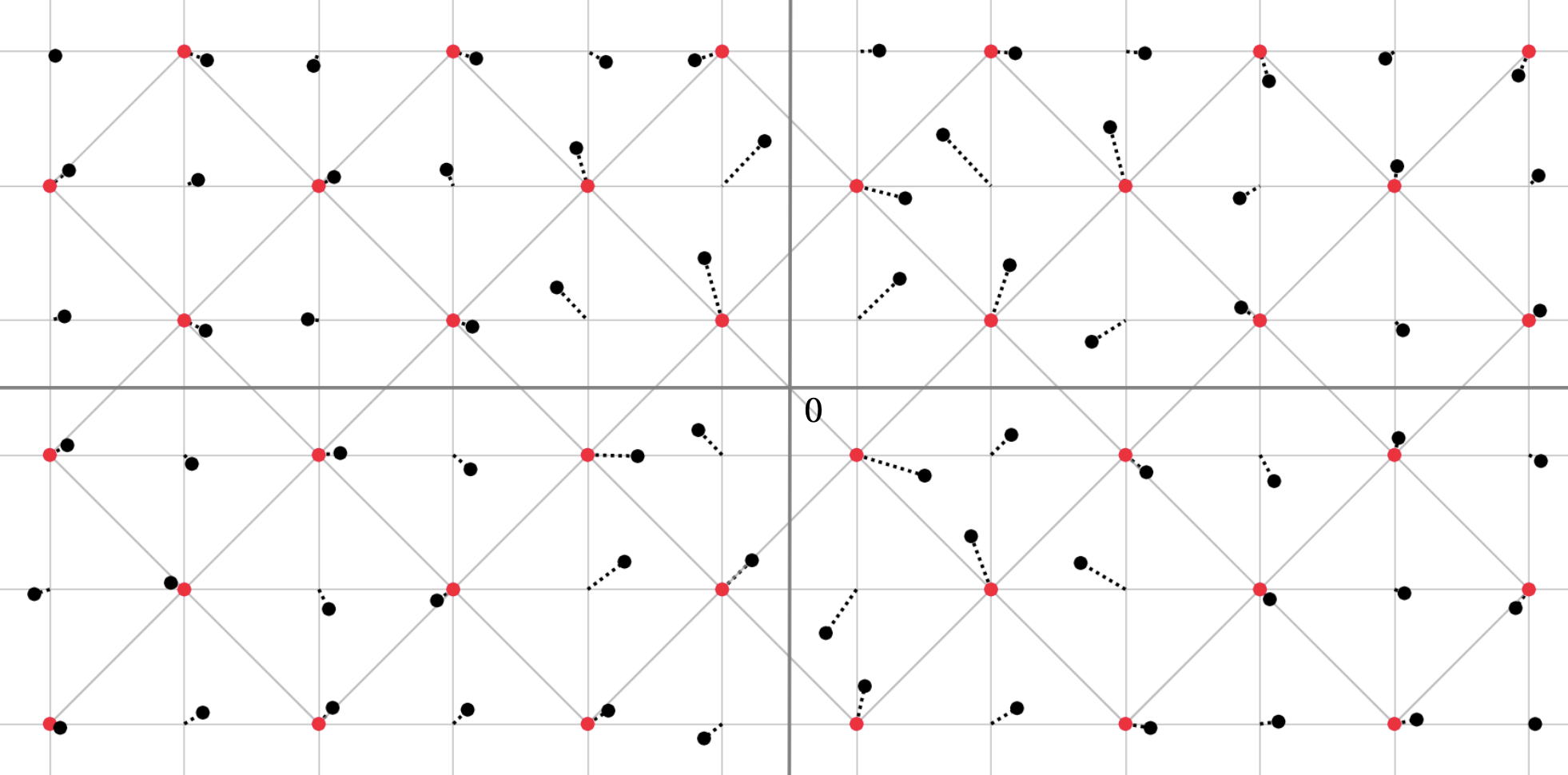}
\caption{This figure depicts an example related to Theorem \ref{thm:density_lower_bound}(2). The vertical and horizontal lines induce a (shifted) square lattice $\Lambda$ of density $>4$. The black points are perturbations of $\Lambda$. The red points depict a sublattice $\Lambda' \subseteq \Lambda$ of density $D(\Lambda') = \frac{1}{2}D(\Lambda)$. The union of the black and red points forms a uniqueness set for the Gabor phase retrieval problem in $L^2(\R,\R)$, and has density $> 6$. The density can be as close to $6$ as we please.
}
\label{fig:3}
\end{figure}

\begin{figure}
\centering
\hspace*{-0.4cm}
  \includegraphics[width=12.7cm]{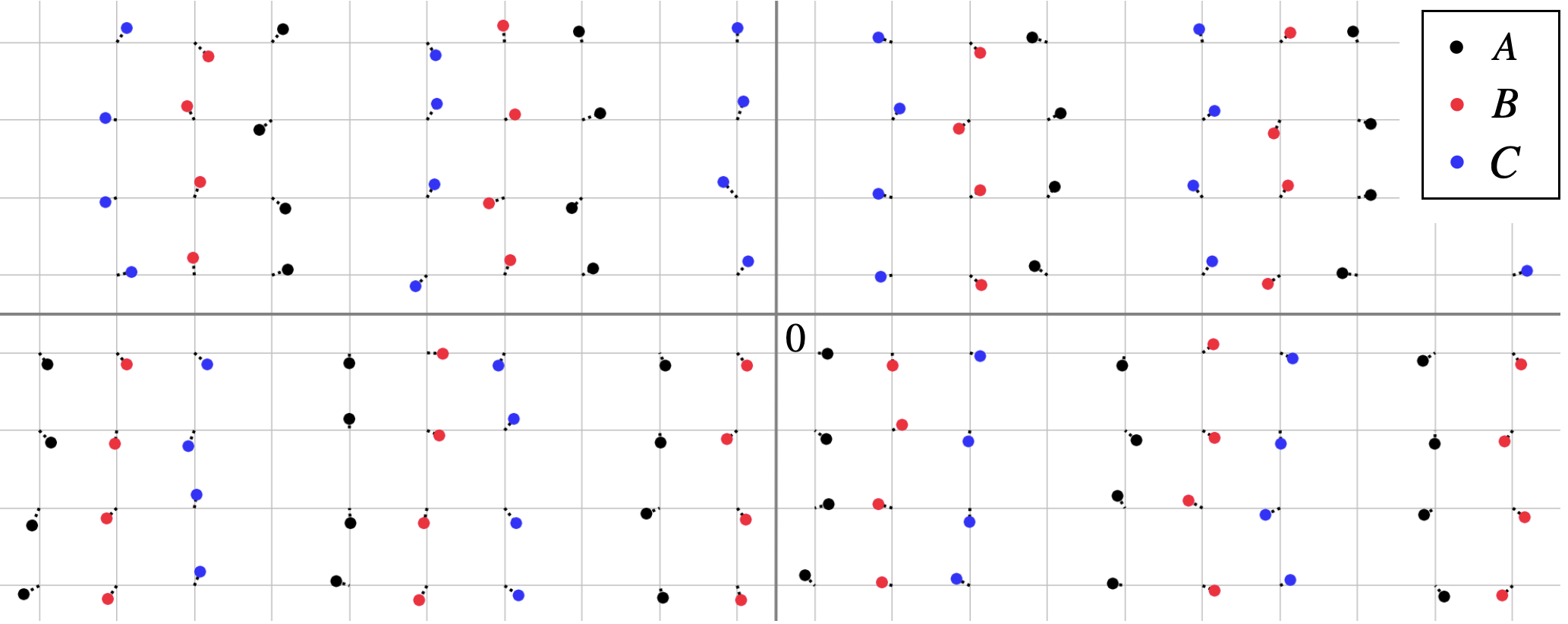}
\caption{This figure depicts an example related to Theorem \ref{thm:density_lower_bound}(3). The gray mesh induces a square lattice $\Lambda$ of density $>4$. The points $A,B,C$ are perturbations of points of $\Lambda$, with the property that their union, $A \cup B \cup C$, forms a uniqueness set for the Gabor phase retrieval problem in $L^2_e(\R,\R)$, having density $>3$. This density can be as close to $3$ as we please. In addition, $A \cup B \cup C$ is separated.
}
\label{fig:4}
\end{figure}

\subsection{Previous work}

\subsubsection{Square-root sampling}

To our knowledge, the first construction of a discrete uniqueness set for the phase retrieval problem in Fock spaces is provided in the recent work \cite{squaresampling}. This uniqueness set is of the form $\Lambda_{\sqrt{\cdot}} = \left\{\pm \sqrt{n} \pm i \sqrt{m}:\ n,m\in \mathbb{N}_0 \right\}$. However, since the set $\Lambda_{\sqrt{\cdot}}$ does not have finite density and is not uniformly distributed, this result is of limited practical value.

\subsubsection{Multi-window approach}

In another line of work, the authors show that if for each $F\in \mathcal{F}_\pi(\C)$ one additionally obtains phaseless samples of 3 judiciously chosen functions $F_1,F_2,F_3$, one can even reconstruct uniquely from phaseless samples on any lattice of density $\ge 4$ \cite{multiwindow}. 
The reformulation of the result in terms of time-frequency analysis amounts to employing four suitably chosen window functions. 
This corresponds to $4$-fold sampling on a lattice of density $\ge 4$.

\subsubsection{Restriction to subspaces}

Lastly, we note that phaseless sampling on lattices in $\mathcal{F}_\pi(\C)$ can be done uniquely by restricting the problem to specific proper subspaces of $\mathcal{F}_\pi(\C)$. Notable examples of such subspaces include the image of $L^2(I)$, where $I \subseteq \R$ is compact, under the Bargmann transform or the corresponding image of certain shift-invariant spaces \cite{grohsliehr1,grohs2022completeness,att2,WELLERSHOFF2024127692}.

\section{Notation}

The following notation is used throughout the remainder of the article. Given two vectors $z,w\in\C^d$, we use the abbreviation $z\cdot w= \sum_{k=1}^d z_iw_i$.
The Euclidean length of $z$ will be denoted by $|z|$.
The symbols $\mathbb{D}, \mathbb{H}_+, \mathbb{H}_-$ denote the open unit disk, the open upper half plane, and the open lower half plane, respectively.
By $\partial \Omega$ we denote the boundary of a set $\Omega \subseteq \C$. Further, we use the notation $\overline{\Omega}=\{\overline{z}: z\in\Omega\}$ and $-\Omega=\{-z: z\in\Omega\}$.
Given three points $a,b,c\in \C$ in the complex plane, we denote by 
$\Delta(a,b,c)$ the triangle with vertices $a,b,c$. The distance of $\Omega$ to a point $z \in \C$ is defined by $\mathrm{dist}(\Omega,z) \coloneqq \inf \{ |w-z| : w \in \Omega \}$.

By $\mathcal{O}(\C^d)$ we denote the collection of entire functions of $d$ complex variables.
For $F\in\mathcal{O}(\C^d)$ we use the notation $F^*(z)=\overline{F(\bar{z})}$. Note that $F^*\in \mathcal{O}(\C^d)$.
The Lebesgue measure on the $d$-dimensional complex space $\C^d$ will be denoted by $A(z)$.
For $\alpha>0$, we introduce a family of probability measures by defining
$$
\mbox{d}\mu_\alpha(z) = \left(\frac{\alpha}{\pi} \right)^d e^{-\alpha|z|^2} \,\mbox{d}A(z).
$$
The Fock space $\ft_\alpha(\C^d)$ is defined by
$$
\ft_\alpha(\C^d) = \left\{F\in\mathcal{O}(\C^d):\, \|F\|_{L^2(d\mu_\alpha)}<\infty \right\}.
$$
Equipped with the inner product
$$
\langle F,G\rangle_{\ft_\alpha(\C^d)} = \int_{\C^d} F(z)\overline{G(z)}\,\mbox{d}\mu_\alpha(z),
$$
the space $\ft_\alpha(\C^d)$ forms a reproducing kernel Hilbert space, and the reproducing vectors are given by
$$
k_z(w) = e^{\alpha w\cdot \bar{z}}.
$$
Notice, that every $F \in \ft_\alpha(\C^d)$ satisfies the pointwise growth estimate
\begin{equation}\label{eq:growth_fock_space}
    |F(z)| \leq \| F \|_{\ft_\alpha(\C^d)} e^{\frac{\alpha}{2}|z|^2}, \quad z \in \C^d.
\end{equation}
The estimate \eqref{eq:growth_fock_space} in combination with the definition of the norm $\| \cdot \|_{\ft_\alpha(\C^d)}$, implies that for every $s \in \C^d$ and every $\varepsilon>0$ one has
\begin{equation}\label{eq:shift_episolon}
    F(\cdot + s) \in \ft_{\alpha + \varepsilon}(\C^d).
\end{equation}
For a detailed exposition of the theory of Fock spaces we refer to \cite{zhu:fock}.

\section{Liouville sets}\label{sec:liouville_sets}

This section is devoted to an investigation of Liouville sets for the Fock space. Recall that $\Lambda \subseteq \C$ is said to be a Liouville set for $V \subseteq \mathcal{O}(\C)$ if every $F \in V$ that is bounded on $\Lambda$ is a constant function.

\subsection{ Uniform closeness and Lagrange-type interpolation}

As introduced in Section \ref{sec:closeness}, a set $A$ is said to be uniformly close to $B$, if $A$ is $f$-close to $B$ with respect to a constant function $f(z) = \Delta \geq 0$. Within the context of this section, we need to address the concept of uniform closeness more precisely: we say that a set $A$ is $\Delta$-uniformly close to a set $B$, if there exists an index set $J$, and an enumeration of $A$ and $B$ with index set $J$, i.e., $A=(a_j)_{j \in J}, \, B=(b_j)_{j \in J}$, such that
$$
|a_j - b_j| \leq \Delta, \quad j \in J.
$$

For $\beta > 0$, we define the square lattice $\Lambda_\beta$ by
$$
\Lambda_\beta \coloneqq \sqrt{\frac{\pi}{\beta}}(\Z + i \Z) = (\lambda_{mn})_{m,n \in \Z}, \quad \lambda_{mn} \coloneqq \sqrt{\frac{\pi}{\beta}}(m+in).
$$

Following Seip and Wallstén \cite{Seip1992}, we define for a separated sequence $\Gamma = (\gamma_{mn})_{m,n \in \Z} \subseteq \C$ that is uniformly close to $\Lambda_\beta$ for some $\beta > 0$, the entire function $g_\Gamma : \C \to \C$ by
$$
    g_\Gamma(z) \coloneqq (z-\gamma_{00}) \cdot \prod_{\substack{(m,n) \in \Z^2 \\ (m,n) \neq (0,0) }} \left ( 1-\frac{z}{\gamma_{mn}} \right ) \exp \left ( \frac{z}{\gamma_{mn}} + \frac{1}{2} \frac{z^2}{\lambda_{mn}^2} \right ),
$$
where 
$$
\gamma_{00}=\underset{\substack{\gamma\in\Gamma\\ |\gamma|=\dist(\Gamma,0)}}{\argmin} \arg(\gamma)
$$
is the point which has minimal argument among all minimal length points in $\Gamma$. With this, $\Gamma\mapsto g_\Gamma$ is a well-defined map. The derivative of $g_\Gamma$ at points in $\Gamma$ satisfies the following lower bound \cite[Lemma 2.2]{Seip1992}.

\begin{lemma}\label{g_property}
    Let $\Gamma$ be separated and $\Delta$-uniformly close to $\Lambda_\beta$. Then there exist constants $C,c>0$ depending only on $\Delta$ and $\delta(\Gamma)$ such that
    $$
    |g'_\Gamma(\gamma)| \geq Ce^{-c|\gamma|\log |\gamma|} e^{\frac{\beta}{2}|\gamma|^2}, \quad \gamma \in \Gamma.
    $$
\end{lemma}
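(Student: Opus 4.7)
The plan is to reduce the estimate at $\gamma\in\Gamma$ to an analogous (and sharper) estimate for the canonical product of the reference lattice $\Lambda_\beta$, and then to control the perturbation in a quantitative way using the separation of $\Gamma$ and the uniform closeness to $\Lambda_\beta$.

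First I would use the product rule at the simple zero $\gamma=\gamma_{m_0 n_0}$. Only the factor that vanishes at $\gamma$ contributes, so
\[
|g'_\Gamma(\gamma)| = \frac{1}{|\gamma_{m_0 n_0}|}\exp\!\Bigl(\mathrm{Re}\bigl(1 + \tfrac{\gamma_{m_0n_0}^2}{2\lambda_{m_0n_0}^2}\bigr)\Bigr)\cdot |\gamma_{m_0 n_0}-\gamma_{00}|\cdot \prod_{(m,n)\notin\{(0,0),(m_0,n_0)\}} |E_{mn}(\gamma)|,
\]
where $E_{mn}(z)=(1-z/\gamma_{mn})\exp(z/\gamma_{mn}+z^2/(2\lambda_{mn}^2))$. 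Taking logarithms, I would split the sum into $\log|g'_\Gamma(\gamma)|=S_{\text{lat}}(\lambda)+S_{\text{pert}}(\gamma)$, where $S_{\text{lat}}(\lambda)$ is the corresponding expression with $\Gamma$ replaced by $\Lambda_\beta$ at the matching lattice point $\lambda=\lambda_{m_0n_0}$, and $S_{\text{pert}}$ collects the differences.

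For the lattice term I would invoke the classical fact that $g_{\Lambda_\beta}$ is (up to an elementary exponential factor of quadratic type) the Weierstrass sigma function of the square lattice $\Lambda_\beta$. Its quasi–periodicity gives the sharp asymptotic $|g_{\Lambda_\beta}(z)|\asymp \mathrm{dist}(z,\Lambda_\beta)\,e^{\beta|z|^2/2}$, and therefore $|g'_{\Lambda_\beta}(\lambda)|\asymp e^{\beta|\lambda|^2/2}$ at every lattice point, which in particular gives $S_{\text{lat}}(\lambda)\ge \tfrac{\beta}{2}|\lambda|^2-O(1)$.

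For the perturbation I would estimate the sum
\[
S_{\text{pert}}(\gamma)=\sum_{(m,n)}\Bigl[\log\Bigl|\tfrac{\gamma-\gamma_{mn}}{\lambda-\lambda_{mn}}\Bigr|+\mathrm{Re}\bigl(\tfrac{\gamma}{\gamma_{mn}}-\tfrac{\lambda}{\lambda_{mn}}\bigr)\Bigr].
\]
A Taylor expansion in the perturbation $\varepsilon_{mn}=\gamma_{mn}-\lambda_{mn}$ (bounded by $\Delta$) yields two kinds of terms. For indices with $|\lambda_{mn}|\gg |\gamma|$ the contribution decays like $|\gamma|^2|\varepsilon_{mn}|/|\lambda_{mn}|^3$, which is summable uniformly in $\gamma$. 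For indices $|\lambda_{mn}|\lesssim |\gamma|$, I would bound each summand crudely by $O(\log(|\gamma|/|\gamma-\gamma_{mn}|))+O(1)$ and use the separation $\delta(\Gamma)>0$ to count: the number of lattice points with $|\lambda_{mn}|\le |\gamma|$ is $O(|\gamma|^2)$, and the discrete logarithmic sum $\sum_{|\lambda_{mn}|\le|\gamma|}\log|\gamma-\gamma_{mn}|^{-1}$ is controlled by a Riemann–sum argument against $\int_{B_{|\gamma|}}\log|z|^{-1}\,dA(z)=O(|\gamma|^2)$, the separation from zero of each summand by $\delta(\Gamma)/2$ after removing the closest point (already handled explicitly). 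The resulting bound is $|S_{\text{pert}}(\gamma)|\le c|\gamma|\log|\gamma|$ with $c=c(\Delta,\delta(\Gamma))$; combining with $S_{\text{lat}}$ finishes the proof. The main obstacle I anticipate is the careful bookkeeping of the two partial sums so as to achieve the exact $|\gamma|\log|\gamma|$ rate rather than a worse $|\gamma|^2$ rate: this requires exploiting the cancellation $\log|1-z/\gamma_{mn}|+\mathrm{Re}(z/\gamma_{mn}+z^2/(2\lambda_{mn}^2))$ inherent in genus–$2$ canonical products, together with the matching of $\gamma_{mn}$ to $\lambda_{mn}$ through the uniform-closeness enumeration.
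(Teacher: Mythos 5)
The paper does not prove this lemma at all: it is imported verbatim as \cite[Lemma 2.2]{Seip1992}, so the only fair comparison is with the Seip--Wallst\'en argument, and your overall strategy (factor out the vanishing term, compare the remaining product with the Weierstrass $\sigma$-function of $\Lambda_\beta$, whose quasi-periodicity gives $|\sigma'(\lambda)|\asymp e^{\frac{\beta}{2}|\lambda|^2}$, then bound the logarithm of the ratio) is indeed the right one and essentially theirs. The far-range estimate ($|\lambda_{mn}|\gg|\gamma|$, contribution $O(|z|^2\Delta/|\lambda_{mn}|^3)$ summing to $O(\Delta|\gamma|)$) is correct as you describe it.

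However, there is a genuine gap exactly where the lemma lives, namely in the near range $|\lambda_{mn}|\lesssim|\gamma|$. The bound you actually describe there --- ``each summand is $O(\log(|\gamma|/|\gamma-\gamma_{mn}|))+O(1)$, there are $O(|\gamma|^2)$ such indices, and the logarithmic sum is $O(|\gamma|^2)$ by comparison with $\int_{B_{|\gamma|}}\log|z|^{-1}\,dA$'' --- yields $|S_{\mathrm{pert}}(\gamma)|=O(|\gamma|^2)$, not $O(|\gamma|\log|\gamma|)$. That rate is fatal: the whole point of the lemma is that the loss is $e^{-c|\gamma|\log|\gamma|}$, and the application in Theorem \ref{thm:uc_implies_liouville} hinges on $cr\log r-\frac{\beta}{2}r^2\le B-r$, which fails for a quadratic loss. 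You acknowledge at the end that the genus-two cancellation must be exploited, but you never carry it out, and that cancellation \emph{is} the proof. Concretely: after pairing $\log|1-z/\gamma_{mn}|$ with $\mathrm{Re}(z/\gamma_{mn})$ and subtracting the lattice counterpart, the first-order term in $\varepsilon_{mn}=\gamma_{mn}-\lambda_{mn}$ that survives in the near range is of size $|\gamma|\,|\varepsilon_{mn}|/|\lambda_{mn}|^2$, and
$$
\sum_{0<|\lambda_{mn}|\le 2|\gamma|}\frac{|\gamma|\,\Delta}{|\lambda_{mn}|^2}\;\asymp\;\Delta\,|\gamma|\log|\gamma|
$$
is precisely where the $|\gamma|\log|\gamma|$ comes from, while the remaining sums $\sum\log\bigl(|\gamma_{mn}-\gamma|/|\lambda_{mn}-\gamma|\bigr)$ and $\sum\log\bigl(|\lambda_{mn}|/|\gamma_{mn}|\bigr)$ are only $O(|\gamma|)$ once separation is used to handle the finitely many indices with $|\lambda_{mn}-\gamma|\le 2\Delta$. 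Two further points you should not gloss over: your displayed $S_{\mathrm{pert}}$ omits the difference of the quadratic exponents $\mathrm{Re}\bigl((\gamma^2-\lambda^2)/(2\lambda_{mn}^2)\bigr)$, which is only harmless because the symmetric partial sums of $\sum'\lambda_{mn}^{-2}$ vanish for the square lattice; and the passage from $e^{\frac{\beta}{2}|\lambda|^2}$ to $e^{\frac{\beta}{2}|\gamma|^2}$ costs a factor $e^{O(|\gamma|)}$ that must be absorbed into the constant $c$.
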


The introduction of $g_\Gamma$ leads to the following Lagrange-type interpolation formula for functions in Fock space \cite[Lemma 3.1]{Seip1992}.

\begin{lemma}
    Let $0 < \alpha < \beta$, and suppose that $\Gamma \subseteq \C$ is separated and uniformly close to $\Lambda_\beta$. Then every $F \in \ft_\alpha(\C)$\footnote{In fact, the interpolation formula is valid for a larger class of functions, namely for the space $\mathcal{O}(\C) \cap L^\infty(\C,d\mu_\alpha)$.} satisfies the interpolation formula
    $$
    F(z) = \sum_{\gamma \in \Gamma} \frac{F(\gamma) g_\Gamma(z)}{g'_\Gamma(\gamma)(z-\gamma)},
    $$
    where the series on the right-hand side converges absolutely and uniformly on compact subsets of $\C$.
\end{lemma}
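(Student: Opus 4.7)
The plan is the classical three-step Lagrange interpolation strategy: (i) establish absolute and uniform convergence of the series on compact sets; (ii) verify that both sides of the identity agree at every point of $\Gamma$; (iii) show that the resulting entire quotient vanishes via a growth argument. The main obstacle will be step (iii), which requires carefully matched upper and lower bounds on $|g_\Gamma|$ at infinity.

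For step (i), write $S(z)$ for the series on the right-hand side. Combining the Fock-space growth estimate \eqref{eq:growth_fock_space} for $F$ with the lower bound from Lemma \ref{g_property} gives, for $z$ in a compact set $K \subseteq \C$ and $\gamma \in \Gamma$ outside a fixed neighborhood of $K$, an estimate of the form
\[
\left|\frac{F(\gamma)\,g_\Gamma(z)}{g'_\Gamma(\gamma)(z-\gamma)}\right| \leq \frac{C_K\,|g_\Gamma(z)|}{|z-\gamma|}\, e^{-\frac{\beta-\alpha}{2}|\gamma|^2}\, e^{c|\gamma|\log|\gamma|}.
\]
Since $\beta > \alpha$ and $\Gamma$ is separated (so annuli of unit width contain only $O(R)$ points), the Gaussian decay dominates the subexponential correction, yielding absolute and uniform convergence of the series on $K$.

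For step (ii), fix $\gamma_0 \in \Gamma$. Because $g_\Gamma$ has simple zeros precisely on $\Gamma$, every term with $\gamma \neq \gamma_0$ vanishes at $z = \gamma_0$, while the $\gamma = \gamma_0$ term is a removable singularity whose limiting value is $F(\gamma_0)\,g'_\Gamma(\gamma_0)/g'_\Gamma(\gamma_0) = F(\gamma_0)$. Hence $H \coloneqq F - S$ is entire and vanishes on all of $\Gamma$, so the quotient $H/g_\Gamma$ is entire.

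Step (iii) is the decisive point. Summing the bound from step (i) and using the standard upper estimate for the Weierstrass product, $|g_\Gamma(z)| \leq C\, e^{\frac{\beta}{2}|z|^2 + O(|z|\log|z|)}$, one obtains $|S(z)| \leq C\, e^{\frac{\beta}{2}|z|^2 + o(|z|^2)}$, which combined with the Fock-space bound on $F$ gives $|H(z)| \leq C\, e^{\frac{\beta}{2}|z|^2 + o(|z|^2)}$. To conclude $H/g_\Gamma \equiv 0$ I would produce a matching lower bound $|g_\Gamma(z)| \geq c\, e^{\frac{\beta}{2}|z|^2 - o(|z|^2)}$ on a sequence of circles $|z| = R_k \to \infty$. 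Separation of $\Gamma$ permits a choice of radii $R_k$ on which the circles stay at distance at least $\delta(\Gamma)/4$ from every zero, and combining the explicit Weierstrass-product definition of $g_\Gamma$ with a Poisson--Jensen type estimate (in the spirit of Seip--Wallst\'en \cite{Seip1992}) yields the required bound. On these circles $|H/g_\Gamma|$ tends to zero as $R_k \to \infty$, and since $H/g_\Gamma$ is entire, the maximum principle applied to the disks $\{|z| \leq R_k\}$ forces $H/g_\Gamma \equiv 0$, establishing the interpolation identity.
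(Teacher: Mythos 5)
First, a remark on the comparison itself: the paper does not prove this lemma — it is quoted verbatim from Seip--Wallst\'en \cite[Lemma 3.1]{Seip1992} — so there is no internal proof to measure you against, and I am comparing your argument to the standard one. Your steps (i) and (ii) are correct and are exactly the standard ones: combining \eqref{eq:growth_fock_space} with Lemma \ref{g_property} shows that $\sum_{\gamma\in\Gamma}|F(\gamma)|/|g'_\Gamma(\gamma)|$ converges, since the Gaussian gap $e^{-\frac{\beta-\alpha}{2}|\gamma|^2}$ dominates $e^{c|\gamma|\log|\gamma|}$ over a separated set, and the term-by-term evaluation on $\Gamma$ shows that $H=F-S$ vanishes on $\Gamma$, so $H/g_\Gamma$ is entire.

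The gap is in step (iii), precisely where you anticipated trouble. Your bounds $|H(z)|\le Ce^{\frac{\beta}{2}|z|^2+O(|z|\log|z|)}$ and $|g_\Gamma(z)|\ge ce^{\frac{\beta}{2}|z|^2-O(|z|\log|z|)}$ have \emph{independent} constants in the two $O(|z|\log|z|)$ error terms; dividing them yields only $|H/g_\Gamma|\le Ce^{O(|z|\log|z|)}$, which neither tends to zero nor is bounded, and an entire function of order one need not be constant, let alone zero. The repair is to never decouple the copy of $g_\Gamma$ in the numerator of $S$ from the one you divide by: estimate directly $|S(z)/g_\Gamma(z)|\le\sum_{\gamma}|F(\gamma)|/(|g'_\Gamma(\gamma)||z-\gamma|)$, split into $|\gamma|\le|z|/2$ (where $|z-\gamma|\ge|z|/2$, giving $O(1/|z|)$ times a convergent sum) and $|\gamma|>|z|/2$ (the tail of a convergent series, provided $\dist(\Gamma,z)\ge\delta(\Gamma)/4$), so that $S/g_\Gamma\to0$; only the remaining term $F/g_\Gamma$ needs the pointwise lower bound on $g_\Gamma$, and there the loss $e^{c|z|\log|z|}$ is absorbed by $e^{-\frac{\beta-\alpha}{2}|z|^2}$. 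A secondary unjustified claim: separation alone does not furnish circles $|z|=R_k$ staying $\delta(\Gamma)/4$ away from $\Gamma$, since the annulus $R\le|z|\le R+1$ contains on the order of $R$ points whose moduli may form a $\delta/2$-net of $[R,R+1]$ once $R\gg1/\delta$. Instead, establish the decay of $H/g_\Gamma$ on the set $\{z:\dist(\Gamma,z)\ge\delta(\Gamma)/4\}$ and transfer it to each excluded disk $B_{\delta(\Gamma)/4}(\gamma)$ by the maximum principle on $B_{\delta(\Gamma)/2}(\gamma)$, whose boundary lies in the good set; this yields $H/g_\Gamma\to0$ at infinity everywhere and hence $H\equiv0$.
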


As a last ingredient, we require the following elementary lemma on uniform closeness of shifts of sets.

\begin{lemma}\label{uc_Delta}
    Let $\Gamma$ be uniformly close to $\Lambda_\beta$. Then there exists a constant $\Delta \geq 0$ such that $\Gamma-s$ is $\Delta$-uniformly close to $\Lambda_\beta$ for every $s \in \C$.
\end{lemma}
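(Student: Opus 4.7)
The plan is to exploit the translation invariance of the square lattice $\Lambda_\beta$ and simply re-enumerate the shifted set $\Gamma - s$ in a way that depends on $s$. Since the notion of $\Delta$-uniform closeness permits any choice of enumeration, re-indexing is allowed and will absorb the shift up to one fundamental cell.

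Concretely, I would start by invoking the hypothesis: there exists $\Delta_0 \geq 0$ and an enumeration $\Gamma = (\gamma_{mn})_{(m,n) \in \Z^2}$ with $|\gamma_{mn} - \lambda_{mn}| \leq \Delta_0$ for all $(m,n) \in \Z^2$. Next, for a given $s \in \C$, I would choose $(m_s, n_s) \in \Z^2$ such that $\lambda_{m_s n_s}$ is a nearest lattice point to $s$. Since $\Lambda_\beta = \sqrt{\pi/\beta}(\Z + i\Z)$, the covering radius is half the diagonal of a fundamental square, so $|s - \lambda_{m_s n_s}| \leq R$ with $R = \sqrt{\pi/(2\beta)}$, a quantity depending only on $\beta$.

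The key computation is then to introduce the $s$-dependent enumeration
$$
\tilde\gamma_{mn} := \gamma_{m + m_s,\, n + n_s} - s, \quad (m,n) \in \Z^2,
$$
of the set $\Gamma - s$, and use the identity $\lambda_{m+m_s,\,n+n_s} = \lambda_{mn} + \lambda_{m_s n_s}$ to write
$$
\tilde\gamma_{mn} - \lambda_{mn} = \bigl(\gamma_{m+m_s,\,n+n_s} - \lambda_{m+m_s,\,n+n_s}\bigr) + \bigl(\lambda_{m_s n_s} - s\bigr).
$$
Taking absolute values and applying the triangle inequality yields $|\tilde\gamma_{mn} - \lambda_{mn}| \leq \Delta_0 + R$, so setting $\Delta := \Delta_0 + R$ gives the desired $s$-independent bound.

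There is no real obstacle here; the only subtle point is to emphasize that the permitted enumeration of $\Gamma - s$ changes with $s$ (by an index shift of $(m_s, n_s)$), but this is fully consistent with the definition of $\Delta$-uniform closeness given at the beginning of the section, which only requires the \emph{existence} of an enumeration realizing the bound.
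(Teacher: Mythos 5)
Your proof is correct and follows essentially the same route as the paper: both arguments reduce to the observation that $s$ lies within the covering radius $\sqrt{\pi/(2\beta)}$ of $\Lambda_\beta$ and then apply the triangle inequality to obtain $\Delta = \Delta_0 + \sqrt{\pi/(2\beta)}$. The paper phrases this as transitivity of uniform closeness (via the intermediate set $\Lambda_\beta - s$), whereas you make the re-indexing by $(m_s,n_s)$ explicit, but the content is identical.
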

\begin{proof}
Let $s \in \C$, and suppose that $\Delta' \geq 0$ is a constant such that $\Gamma$ is $\Delta'$-uniformly close to $\Lambda_\beta$. This clearly implies that $\Gamma-s$ is $\Delta'$-uniformly close to $\Lambda_\beta - s$. Since $\Lambda_\beta - s$ is $\sqrt{\frac{\pi}{2\beta}}$-uniformly close to $\Lambda_\beta$, it follows from triangle inequality, that $\Gamma-s$ is $\Delta$-uniformly close to $\Lambda_\beta$ with
    $$
    \Delta = \Delta' + \sqrt{\frac{\pi}{2\beta}}.
    $$
    Since $s$ was arbitrary, the statement is proved.
\end{proof}

\subsection{Liouville sets in Fock space}\label{subsec:LiouvilleFock}

The main results of the present section are based on the following key theorem.

\begin{theorem}\label{thm:uc_implies_liouville}
    Let $0 < \alpha < \beta$. If $\Gamma \subseteq \C$ is separated and uniformly close to $\Lambda_\beta$, then $\Gamma$ is a Liouville set for $\ft_\alpha(\C)$.
\end{theorem}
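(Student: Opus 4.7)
The plan is to show that every $F \in \ft_\alpha(\C)$ which is bounded by some constant $M$ on $\Gamma$ is necessarily bounded on the whole complex plane; classical Liouville then forces $F$ to be constant. The strategy combines the Lagrange-type interpolation formula with the shift-invariant uniform closeness provided by Lemma \ref{uc_Delta}.

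Fix $\varepsilon > 0$ with $\alpha + \varepsilon < \beta$. By the shift inclusion \eqref{eq:shift_episolon}, the translate $F(\cdot + s)$ lies in $\ft_{\alpha + \varepsilon}(\C)$ for every $s \in \C$. By Lemma \ref{uc_Delta}, there is a constant $\Delta \geq 0$ independent of $s$ such that $\Gamma - s$ is $\Delta$-uniformly close to $\Lambda_\beta$; in addition $\delta(\Gamma - s) = \delta(\Gamma)$. Applying the interpolation formula to $F(\cdot + s)$ on the set $\Gamma - s$ and evaluating at $z = 0$ yields, for every $s \in \C \setminus \Gamma$,
$$
F(s) \;=\; -\sum_{\mu \in \Gamma - s} \frac{F(\mu + s)\, g_{\Gamma - s}(0)}{g'_{\Gamma - s}(\mu)\, \mu}.
$$

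Each of the three building blocks is uniformly controlled in $s$: first, $|F(\mu + s)| \le M$ since $\mu + s \in \Gamma$; second, a direct evaluation of the Weierstrass product gives $|g_{\Gamma - s}(0)| = \dist(\Gamma, s)$, which is uniformly bounded in $s$ because $\Gamma$ is uniformly close to the lattice $\Lambda_\beta$; third, Lemma \ref{g_property} provides the Gaussian lower bound $|g'_{\Gamma - s}(\mu)| \ge C e^{-c|\mu|\log|\mu|} e^{\beta |\mu|^2/2}$ with constants $C, c$ depending only on $\Delta$ and $\delta(\Gamma)$, and hence uniform in $s$. Restricting to $s$ with $\dist(s, \Gamma) \geq \rho_0 := \delta(\Gamma)/4$ ensures $|\mu| \geq \rho_0$ for every summand; the Gaussian factor $e^{-\beta |\mu|^2/2}$, combined with the uniform-in-$s$ separation of $\Gamma - s$, makes the absolute series converge to a constant $M'$ independent of $s$. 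Thus $|F(s)| \le M'$ whenever $\dist(s, \Gamma) \geq \rho_0$.

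To extend the bound to all of $\C$, any remaining $s$ lies in a disk $B_{\rho_0}(\gamma)$ with $\gamma \in \Gamma$. Since $\rho_0 < \delta(\Gamma)/2$, the boundary $\partial B_{\rho_0}(\gamma)$ satisfies $\dist(\,\cdot\,, \Gamma) = \rho_0$, so the bound $|F| \le M'$ holds on this boundary and, by the maximum modulus principle, throughout the closed disk. Hence $F$ is a bounded entire function, and Liouville's theorem forces $F$ to be constant.

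The main technical obstacle is producing the \emph{uniform-in-$s$} bound on the interpolation series for the shifted sets $\Gamma - s$. This uniformity rests on two ingredients: first, that both the uniform closeness constant and the separation of $\Gamma - s$ do not depend on $s$ (Lemma \ref{uc_Delta} together with the obvious equality $\delta(\Gamma - s) = \delta(\Gamma)$); and second, that the strict inequality $\alpha < \beta$ leaves enough room in the exponents to apply the interpolation formula to the shifted function $F(\cdot + s) \in \ft_{\alpha + \varepsilon}(\C)$ and still have a strictly positive Gaussian gap driving absolute convergence.
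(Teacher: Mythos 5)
Your proof is correct and follows essentially the same route as the paper's: shift $F$, apply the Lagrange interpolation formula to $F(\cdot+s)\in\ft_{\alpha+\varepsilon}(\C)$ on $\Gamma-s$ at $z=0$, and use Lemmas \ref{uc_Delta} and \ref{g_property} to make the series bound uniform in $s$ before invoking Liouville. The only (harmless) deviations are cosmetic: the paper bounds the ratio $|g_{\Gamma-s}(0)|/|\gamma|\le 1$ directly and extends the bound to all of $\C$ by continuity, whereas you bound numerator and denominator separately after restricting to $\dist(s,\Gamma)\ge\rho_0$ and finish with the maximum modulus principle.
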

\begin{proof}
    Let $F \in \ft_\alpha(\C)$ and let $L>0$ such that
    \begin{equation}\label{L}
        |F(\gamma)| \leq L, \quad \gamma \in \Gamma.
    \end{equation}
    Let $\Delta \geq 0$ be given as in Lemma \ref{uc_Delta}, such that $\Gamma-s$ is $\Delta$-uniformly close to $\Lambda_\beta$ for every $s \in \C$. Since $F_s \coloneqq F(\cdot + s) \in \ft_{\alpha + \varepsilon}(\C)$ for every $s \in \C$ and every $\varepsilon>0$, we can expand every $F_s$ in terms of the Lagrange expansion with respect to the set $\Gamma-s$, resulting in the identity
    $$
    F_s(z) = \sum_{\gamma \in \Gamma - s} \frac{F_s(\gamma) g_{\Gamma-s}(z)}{g'_{\Gamma-s}(\gamma)(z-\gamma)}.
    $$
    The definition of $g_{\Gamma-s}$ implies that for every $s \in \C$, one has
    $$
    |g_{\Gamma-s}(0)| = \mathrm{dist}(\Gamma-s,0).
    $$
    Now let $s \in \C \setminus \Gamma$. This assumption is equivalent to the condition that $0 \not \in \Gamma-s$. For such $s$, we have
    \begin{equation}\label{eq:distance}
        \frac{|g_{\Gamma-s}(0)|}{|\gamma|} \leq \frac{|g_{\Gamma-s}(0)|}{\mathrm{dist}(\Gamma-s,0)} = 1, \quad \gamma \in \Gamma-s.
    \end{equation}
    Moreover, it holds that
    \begin{equation}\label{eq:L_bound}
        |F_s(\gamma)| \leq L, \quad \gamma \in \Gamma-s.
    \end{equation}
    Evaluating the Lagrange expansion of $F_s$ at zero for some $s \in \C \setminus \Gamma$, and using \eqref{eq:distance} and \eqref{eq:L_bound}, gives
    $$
    |F_s(0)| \leq L \sum_{\gamma \in \Gamma - s} \frac{1}{|g'_{\Gamma-s}(\gamma)|}.
    $$
    Since $\Gamma-s$ is $\Delta$-uniformly close to $\Lambda_\beta$ for every $s \in \C$, and since $\delta(\Gamma-s) = \delta(\Gamma)$ for every $s \in \C$, it follows from Lemma \ref{g_property}, that there exist constants $C,c>0$, independent of $s$, such that
    $$
    |F_s(0)| \leq \frac{L}{C} \sum_{\gamma \in \Gamma - s} e^{c|\gamma|\log |\gamma|} e^{-\frac{\beta}{2}|\gamma|^2}.
    $$
    We seek to bound the sum on the right hand side by a constant which is independent from $s\in\C$.
    To do so, note that there exists a constant $B>0$  such that 
    $$
    c r \log r - \frac\beta2 r^2 \le B - r, \quad r\ge 0.
    $$
    Moreover, since $\Gamma-s$ is $\Delta$-uniformly close to $\Lambda_\beta$, we can index $\Gamma-s$ in terms of $\Lambda_\beta$, $\Gamma-s = (\gamma_\lambda)_{\lambda\in\Lambda_\beta}$, and obtain that
    $$
    |\gamma_\lambda| \ge |\lambda|-|\gamma_\lambda-\lambda| \ge |\lambda|-\Delta,\quad \lambda\in\Lambda_\beta.
    $$
    With this, we obtain that 
    \begin{equation}
        \sum_{\gamma \in \Gamma - s} e^{c|\gamma|\log |\gamma|} e^{-\frac{\beta}{2}|\gamma|^2}
         \le \sum_{\lambda\in\Lambda_\beta} e^{B-|\gamma_\lambda|}
        \le  \sum_{\lambda\in\Lambda_\beta} e^{B+\Delta-|\lambda|}=:K <\infty,
    \end{equation}
    and $K$ does not depend on $s$ as desired.
    Consequently,
    $$
    |F_s(0)| = |F(s)| \leq \frac{LK}{C}, \quad s \in \C \setminus \Gamma.
    $$
    It follows from continuity, that $F$ is globally bounded. Liouville's theorem implies that $F$ is a constant function.
\end{proof}

We can deduce from the previous statement, that every subset of $\C$ that contains a set of stable sampling is a Liouville set. 

\begin{corollary}\label{cor:stable_sampling}
If $\Lambda \subseteq \C$ contains a set of stable sampling for $\ft_\alpha(\C)$, then $\Lambda$ is a Liouville set for $\ft_\alpha(\C)$.
\end{corollary}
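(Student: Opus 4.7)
The approach is to reduce the statement to Theorem~\ref{thm:uc_implies_liouville} by extracting from the set of stable sampling a separated subset that is uniformly close to a square lattice $\Lambda_\beta$ with $\beta>\alpha$.

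Let $\Gamma\subseteq\Lambda$ be a set of stable sampling for $\ft_\alpha(\C)$. After passing to a separated subsequence if necessary, I may assume $\Gamma$ itself is separated. Since any $F\in\ft_\alpha(\C)$ bounded on $\Lambda$ is also bounded on $\Gamma$, it suffices to show that $\Gamma$ is a Liouville set for $\ft_\alpha(\C)$. The key external input is the Lyubarskii--Seip--Wallst\'en characterization of sampling sets for the Fock space: a separated $\Gamma$ is a set of stable sampling for $\ft_\alpha(\C)$ if and only if its lower Beurling density satisfies $D^-(\Gamma)>\alpha/\pi$. Choosing any $\beta$ with $\alpha<\beta<\pi D^-(\Gamma)$, the square lattice $\Lambda_\beta$ has density $\beta/\pi$ strictly below that of $\Gamma$.

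The central step is to build an injection $\lambda\mapsto\gamma_\lambda\in\Gamma$ from $\Lambda_\beta$ into $\Gamma$ satisfying $\sup_{\lambda\in\Lambda_\beta}|\lambda-\gamma_\lambda|<\infty$. For this I would use a Hall-type matching argument: the density surplus implies that for sufficiently large $R$, the bipartite graph joining $\lambda\in\Lambda_\beta$ to $\gamma\in\Gamma$ whenever $|\lambda-\gamma|\le R$ is locally finite (since $\Gamma$ is separated) and satisfies Hall's condition, in the sense that every finite $S\subseteq\Lambda_\beta$ has its $R$-neighborhood containing at least $|S|$ points of $\Gamma$. The countable version of Hall's theorem, proved by a standard compactness extension from finite subgraphs, then yields the desired global matching. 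Setting $\Gamma'=\{\gamma_\lambda:\lambda\in\Lambda_\beta\}$ produces a separated subset of $\Gamma$ that is $R$-uniformly close to $\Lambda_\beta$. Since $\alpha<\beta$, Theorem~\ref{thm:uc_implies_liouville} applied to $\Gamma'$ gives that $\Gamma'$ is a Liouville set for $\ft_\alpha(\C)$; any $F\in\ft_\alpha(\C)$ bounded on $\Lambda$ is therefore bounded on $\Gamma'\subseteq\Lambda$, and hence constant, completing the proof.

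I expect the main obstacle to be the verification of Hall's condition from the asymptotic lower Beurling density bound. A naive inscribed-ball estimate comparing $|\Gamma\cap N_R(S)|$ with $|S|$ is wasteful for finite sets $S$ of irregular shape, so one must either invoke the shape-independent refinement of Beurling density over convex sets with sufficiently large area, or argue more carefully by overlapping large-ball covers in which the matching is produced tile-by-tile and then glued globally via compactness. Either route is standard in Beurling density theory but constitutes the technical core of the argument.
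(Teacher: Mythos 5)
Your overall strategy coincides with the paper's: both arguments reduce the corollary to Theorem \ref{thm:uc_implies_liouville} by locating inside the set of stable sampling a separated subsequence that is uniformly close to $\Lambda_\beta$ for some $\beta>\alpha$. The difference is in how that subsequence is produced. The paper delegates this entirely to two cited facts: a sampling set for $\ft_\alpha(\C)$ contains a separated subsequence of lower Beurling density strictly greater than $\alpha/\pi$ \cite[Theorem 4.36]{zhu:fock}, and any such sequence contains a separated subsequence uniformly close to some $\Lambda_\beta$ with $\beta>\alpha$ \cite[Lemma 4.31]{zhu:fock}. You propose to prove the second fact yourself via an infinite Hall matching, which is a legitimate, self-contained alternative, and the Hall condition you flag as the technical core is in fact not hard to verify: tile $\C$ by squares of side $T$ chosen so large that each contains at least $(1+\epsilon)\tfrac{\beta}{\pi}T^2$ points of $\Gamma$ (possible since $D^-(\Gamma)>\beta/\pi$, uniformly in position); for finite $S\subseteq\Lambda_\beta$ with $H$ the union of the corresponding lattice cells, the $T$-squares meeting $H$ are pairwise disjoint, number at least $|H|/T^2$, and lie in $N_R(S)$ for $R\approx T\sqrt2$, whence $\#\bigl(\Gamma\cap N_R(S)\bigr)\ge(1+\epsilon)\tfrac{\beta}{\pi}|H|=(1+\epsilon)\#S$; local finiteness of the bipartite graph (from separation of $\Gamma$) then yields the matching by the compactness version of Hall's theorem, and the image set inherits separation from $\Gamma$, which is exactly what Theorem \ref{thm:uc_implies_liouville} requires. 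One genuine imprecision to fix: an arbitrary separated subsequence of a sampling set need not itself be a sampling set, so you cannot first ``assume $\Gamma$ is separated'' and then apply the if-and-only-if density characterization to it; you should instead quote directly that a sampling set \emph{contains} a separated subsequence of lower density exceeding $\alpha/\pi$, which is the form the paper uses. In sum, your route buys self-containedness at the cost of carrying out the matching argument; the paper's buys brevity by citation.
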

\begin{proof}
    Suppose that $\Gamma \subseteq \Lambda$ is a set of stable sampling for $\ft_\alpha(\C)$. Then $\Gamma$ contains a separated subsequence $\Gamma'$ of lower Beurling density strictly greater than $\frac{\alpha}{\pi}$ \cite[Theorem 4.36]{zhu:fock}. This in turn implies that $\Gamma'$ contains a separated subsequence $\Gamma''$ that is uniformly close to $\Lambda_\beta$ for some $\beta > \alpha$ \cite[Lemma 4.31]{zhu:fock}. According to Theorem \ref{thm:uc_implies_liouville}, $\Gamma''$ is a Liouville set for $\ft_\alpha(\C)$. Since $\Gamma'' \subseteq \Lambda$, it follows that $\Lambda$ is a Liouville set for $\ft_\alpha(\C)$.
\end{proof}

Next, we turn towards a characterization of lattice Liouville sets in $\ft_\alpha(\C)$. To do so, we apply Perelomov's theorem \cite[Assertion 1]{perelomov71}.

\begin{theorem}[Perelomov]\label{thm:perelomov}
    A lattice $\Lambda \subseteq \C$ is a uniqueness set for $\ft_\alpha(\C)$ if and only if $s(\Lambda) \leq \frac{\pi}{\alpha}$. If $s(\Lambda) = \frac{\pi}{\alpha}$, then $\Lambda$ stays a uniqueness set for $\ft_\alpha(\C)$ on the removal of a single point, but fails to be a uniqueness set if more than one point is removed.
\end{theorem}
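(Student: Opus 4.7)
My plan is to base the entire proof on the Weierstrass sigma function $\sigma_\Lambda$ associated with the lattice, which is the canonical entire function whose zero set is exactly $\Lambda$, with all zeros simple. The crucial analytic input is the classical two-sided estimate
$$
|\sigma_\Lambda(z)| \asymp \mathrm{dist}(z,\Lambda)\,e^{\frac{\pi}{2 s(\Lambda)}|z|^2},
$$
valid up to a bounded multiplicative error and after absorbing the standard quasi-periodicity/oscillation factor (this can be found in Hayman's book on subharmonic functions or derived from the explicit formula $\sigma_\Lambda(z)=z\prod_{\lambda\in\Lambda\setminus\{0\}}(1-z/\lambda)\exp(z/\lambda+z^2/(2\lambda^2))$ combined with the Weierstrass $\zeta$-series). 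From this estimate one reads off that for any finite $S\subseteq\Lambda$, the meromorphic--turned--entire function $z \mapsto \sigma_\Lambda(z)\prod_{s\in S}(z-s)^{-1}$ has growth of order $2$ and type precisely $\frac{\pi}{2 s(\Lambda)}$, weighted down by the polynomial factor $\prod_{s\in S}|z-s|^{-1}$.

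With these estimates, I would prove the three cases as follows. If $s(\Lambda)>\frac{\pi}{\alpha}$, then $|\sigma_\Lambda(z)|^2 e^{-\alpha|z|^2}\lesssim e^{(\pi/s(\Lambda)-\alpha)|z|^2}$ with a strictly negative exponent, so $\sigma_\Lambda\in\mathcal{F}_\alpha(\C)$ is a nonzero function vanishing on $\Lambda$, and hence $\Lambda$ is not a uniqueness set. If $s(\Lambda)<\frac{\pi}{\alpha}$, then for any $F\in\mathcal{F}_\alpha(\C)$ vanishing on $\Lambda$ the quotient $G \coloneqq F/\sigma_\Lambda$ is entire (zeros cancel) and satisfies, via the lower bound on $\sigma_\Lambda$ together with the pointwise Fock-space bound \eqref{eq:growth_fock_space},
$$
|G(z)| \lesssim \frac{1}{\mathrm{dist}(z,\Lambda)}\, e^{(\alpha/2-\pi/(2s(\Lambda)))|z|^2}.
$$
The exponent is strictly negative, and an application of the maximum modulus principle on small disks centered at lattice points eliminates the spurious $\mathrm{dist}(z,\Lambda)^{-1}$ factor; one concludes that $G$ is bounded and decays, hence $G\equiv 0$ by Liouville, i.e.\ $F\equiv 0$.

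The critical case $s(\Lambda)=\frac{\pi}{\alpha}$ and its refinement on removal of points is the delicate part. Here $\sigma_\Lambda$ itself is no longer in $\mathcal{F}_\alpha(\C)$ since $|\sigma_\Lambda|^2 e^{-\alpha|z|^2}$ is bounded below on large annuli and thus not integrable. Suppose $F\in\mathcal{F}_\alpha(\C)$ vanishes on $\Lambda\setminus\{a\}$. Write $F(z)=\sigma_\Lambda(z)G(z)/(z-a)$ with $G$ entire; the growth estimates yield $|G(z)|\lesssim |z-a|/\mathrm{dist}(z,\Lambda)$, which is of linear order, so $G(z)=c_0+c_1 z$. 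Plugging back and computing $\|F\|_{\mathcal{F}_\alpha}^2$, the integrand is asymptotic to $|c_0+c_1 z|^2 \,\mathrm{dist}(z,\Lambda)^2 / |z-a|^2$, which decays only like $1$ (if $c_1\neq 0$) or $|z|^{-2}$ (if $c_1=0$)—both non-integrable on $\C$. Hence $G\equiv 0$ and $F\equiv 0$, so $\Lambda\setminus\{a\}$ remains a uniqueness set. Conversely, for any two points $a,a'\in\Lambda$ the function $F(z) \coloneqq \sigma_\Lambda(z)/((z-a)(z-a'))$ is entire, vanishes on $\Lambda\setminus\{a,a'\}$, and its integrand behaves like $|z|^{-4}$ at infinity, hence $F\in\mathcal{F}_\alpha(\C)\setminus\{0\}$, showing that $\Lambda\setminus\{a,a'\}$ fails to be a uniqueness set.

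The main obstacle is the sharp growth estimate for $\sigma_\Lambda$, especially controlling the oscillating quasi-periodic factor uniformly and handling the $\mathrm{dist}(z,\Lambda)^{-1}$ term near lattice points via a local maximum-principle argument. All remaining steps reduce to bookkeeping of exponential orders and checking whether specific integrands are integrable at infinity.
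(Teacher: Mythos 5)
The paper does not prove this statement at all: it is imported verbatim from Perelomov \cite[Assertion 1]{perelomov71}, so there is no in-paper argument to compare against. Your proposal supplies the standard self-contained proof via the modified Weierstrass $\sigma$-function, and it is essentially correct: the trichotomy in $s(\Lambda)$ versus $\pi/\alpha$, the division $F/\sigma_\Lambda$ combined with \eqref{eq:growth_fock_space} and a local maximum-modulus argument to kill the $\dist(z,\Lambda)^{-1}$ singularity, the degree-$\le 1$ polynomial classification at the critical density, and the non-integrability bookkeeping are all sound. Your construction for the failure after removing two points, $\sigma_\Lambda(z)/((z-a)(z-a'))$, is exactly the function the paper itself uses in Remark \ref{rem:critical_density} to show that critical lattices are not Liouville sets. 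The one point you should tighten is the key analytic input: the two-sided bound $|\sigma_\Lambda(z)|\asymp \dist(z,\Lambda)\,e^{\frac{\pi}{2s(\Lambda)}|z|^2}$ holds for the \emph{modified} function $\sigma_\Lambda=\sigma\,e^{a(\Lambda)z^2}$, not for the plain product you display; without the Gaussian correction $e^{a(\Lambda)z^2}$ the quasi-periodicity factors destroy radial symmetry and no such two-sided estimate is available. Since the lower bound is the workhorse of every direction of your argument (the paper only records the upper bound \eqref{sigma_upper_bound}), it should be cited precisely, e.g.\ to Hayman \cite{hayman_1974} or to \cite{Groechenig2009}, rather than waved at. With that reference in place the proof is complete, and arguably more informative than the paper's bare citation, since it also recovers Seip--Wallst\'en-type sampling heuristics as a by-product.
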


\begin{proof}[Proof of Theorem \ref{thm:Main3}]
Let $\Lambda \subseteq \C$ be a lattice. If $s(\Lambda) < \frac{\pi}{\alpha}$, then $\Lambda$ is a set of stable sampling for $\ft_\alpha(\C)$ \cite[Theorem 1.1]{Seip1992}. In view of Corollary \ref{cor:stable_sampling}, $\Lambda$ is a Liouville set for $\ft_\alpha(\C)$.

It remains to show, that if a lattice $\Lambda$ is a Liouville set for $\ft_\alpha(\C)$, then $s(\Lambda) < \frac{\pi}{\alpha}$. 
To this end, we observe that every Louville set for $\ft_\alpha(\C)$ is a uniqueness set for $\ft_\alpha(\C)$. According to Theorem \ref{thm:perelomov}, it must hold that $s(\Lambda) \leq \frac{\pi}{\alpha}$. It remains to show, that any lattice at the critical rate $s(\Lambda) = \frac{\pi}{\alpha}$ cannot be a Liouville set for $\ft_\alpha(\C)$. Assume the contrary, i.e., $\Lambda$ is a Liouville set for $\ft_\alpha(\C)$ satisfying $s(\Lambda) = \frac{\pi}{\alpha}$. Since every Liouville set is a uniqueness set, and, in addition, remains a Liouville set on the removal of any finite number of points, we end up with a contradiction to Theorem \ref{thm:perelomov}.
\end{proof}

\begin{remark}[Critical density]\label{rem:critical_density}
    According to Theorem \ref{thm:Main3}, a lattice $\Lambda$ with $s(\Lambda) = \frac{\pi}{\alpha}$ is not a Liouville set for $\ft_\alpha(\C)$. Hence, there exists a function in $\ft_\alpha(\C)$, that is not a constant function, and, in addition, is bounded on $\Lambda$. Such a function can be constructed by means of a modified Weierstrass $\sigma$-function. Recall, that the Weierstrass $\sigma$-function associated to a lattice $\Lambda$ with periods $\omega_1,\omega_2$ is given by
$$
\sigma(z) = z \prod_{\lambda \in \Lambda \setminus \{ 0\}} \left ( 1-\frac{z}{\lambda} \right ) \exp \left ( \frac{z}{\lambda} + \frac{z^2}{2\lambda^2} \right ).
$$
The function $\sigma$ is quasi-periodic, i.e, there exist values $\eta_1,\eta_2 \in \C$ such that
\begin{equation}
    \begin{split}
        \sigma(z+\omega_1) &= - \sigma(z) e^{\eta_1 z + \frac{1}{2}\eta_1 \omega_1}, \\
        \sigma(z+\omega_2) &= - \sigma(z) e^{\eta_2 z + \frac{1}{2}\eta_2 \omega_2}.
    \end{split}
\end{equation}
For $\omega_1,\omega_2,\eta_1,\eta_2$ given as above, define $a(\Lambda) \in \C$ via 
$$
    a(\Lambda) = \frac{1}{2} \frac{\eta_2 \overline{\omega_1} - \eta_1 \overline{\omega_2}}{\omega_1 \overline{\omega_2} - \omega_2 \overline{\omega_1}},
$$
and set $\sigma_\Lambda(z) \coloneqq \sigma(z)e^{a(\Lambda) z^2}$. We call $\sigma_\Lambda$ the modified Weierstrass $\sigma$-function associated to $\Lambda$. The modified Weierstrass $\sigma$-function was introduced by Hayman in \cite{hayman_1974}, and played an important role in a series of articles by Gröchenig and Lyubarskii on sets of stable sampling in Fock space \cite{Groechenig2009,GROCHENIG2,GROCHENIG3}. The term $e^{a(\Lambda) z^2}$ in the definition of $\sigma_\Lambda$ serves as a correction term and gives rise to the growth estimate
\begin{equation}\label{sigma_upper_bound}
        |\sigma_\Lambda(z)| \le C e^{\frac\pi{2 s(\Lambda)}|z|^2}, \quad z\in\C,
\end{equation}
with $C>0$ a constant depending only on $\Lambda$ \cite[Proposition 3.5]{Groechenig2009}.
    Now consider two distinct lattice points $\lambda,\lambda' \in \Lambda$, and define
    $$
    Q(z) \coloneqq \frac{\sigma_\Lambda(z)}{(z-\lambda)(z-\lambda')}.
    $$
Since $\lambda$ and $\lambda'$ are zeros of $\sigma_\Lambda$ it follows that $Q$ is holomorphic. Using \eqref{sigma_upper_bound} and the identity $s(\Lambda) = \frac{\pi}{\alpha}$, it follows that
\begin{equation}
    \begin{split}
        & \int_\C |Q(z)|^2 e^{-\alpha |z|^2} \, \mbox{d}A(z) \\
        & \leq \int_{B_{2|\lambda-\lambda'|}(\lambda)} |Q(z)|^2 e^{-\alpha |z|^2} \,\mbox{d}A(z) + C^2 \int_{\C \setminus B_{2|\lambda-\lambda'|}(\lambda)} \frac{1}{|(z-\lambda)(z-\lambda')|^2} \,\mbox{d}A(z).
    \end{split}
\end{equation}
The first integral on the right-hand side of the latter inequality exists since the integrand is continuous. The decay of the function $z \mapsto \frac{1}{(z-\lambda)(z-\lambda')}$ on $\C \setminus B_{2|\lambda-\lambda'|}(\lambda)$ implies that the second integral exists as well. Consequently, $Q \in \ft_\alpha(\C)$.  As $Q$ vanishes on $\Lambda\setminus \{\lambda,\lambda'\}$, we have that $Q$ is necessarily bounded on $\Lambda$, while not being a constant.
\end{remark}

We proceed by establishing Theorem \ref{thm:Main2}.

\begin{proof}[Proof of Theorem \ref{thm:Main2}]
    The statements discussed in the present section imply that $\mathcal{S}_S \subseteq \mathcal{S}_L \subseteq \mathcal{S}_U$.
    
    To show that the inclusions are proper, we first note that every lattice $\Lambda \subseteq \C$ satisfying $s(\Lambda)=\frac{\pi}{\alpha}$ is a uniqueness set for $\ft_\alpha(\C)$ but not a Liouville set for $\ft_\alpha(\C)$. This shows that $\mathcal{S}_L \subsetneq \mathcal{S}_U$.

    Finally, we construct a Liouville set for $\ft_\alpha(\C)$ that does not contain a set of stable sampling for $\ft_\alpha(\C)$. To do so, let $L_1,L_2,L_3\subseteq \C$ be three distinct lines in the plane intersecting at $0$. Then the connected components of $\C\setminus( L_1\cup L_2\cup L_3)$ are six sectors. 
    Let us further assume that the three lines are chosen in such a way that each of these sectors has an acute angle at the origin.
    If $F \in \ft_\alpha(\C)$ is bounded on $L_1 \cup L_2 \cup L_3$, then the estimate $|F(z)| \leq \| F \|_{\ft_\alpha(\C)} e^{\frac{\alpha}{2}|z|^2}$ in conjunction with the Phragmén–Lindelöf principle \cite[Theorem 10, p. 68]{Young} shows that $F$ is bounded in each of the six sectors. Consequently, $L_1 \cup L_2 \cup L_3$ is a Liouville set for $\ft_\alpha(\C)$.
    On the other hand, the complement of $L_1 \cup L_2 \cup L_3$ contains arbitrarily large balls, implying that the lower Beurling density of every subset of $L_1 \cup L_2 \cup L_3$ is zero. Since every set of stable sampling contains a subset of positive lower Beurling density \cite[Theorem 1.1]{seipBulletin}, it follows that $L_1 \cup L_2 \cup L_3$ does not contain a set of stable sampling.
\end{proof}

We conclude this section with the proof of Theorem \ref{cor:Hardy}, which states that every sampling set induces a Hardy uncertainty principle.

\begin{proof}[Proof of Theorem \ref{cor:Hardy}]
    Let $\Lambda \subseteq \C$ be a set of stable sampling for $\ft_\pi(\C)$, and let $f \in \lt$ such that
\begin{equation}
    |\mathcal{G}f(\lambda)| \leq C e^{-\frac{\pi}{2}|\lambda|^2},\quad \lambda \in \Lambda.
\end{equation}
The latter bound is equivalent to the condition that the Bargmann transform $\mathcal{B}f$ is bounded on $\bar \Lambda$.
Note that as $\Lambda$ is a set of stable sampling, so is $\bar \Lambda$.
As per Theorem \ref{thm:Main2} we have that $\bar \Lambda$ is a Liouville set for $\ft_\pi(\C)$. Hence, there exists a constant $c \in \C$ with $\mathcal B f = c$. Since $\mathcal{B}^{-1}(1)(x) = e^{-\pi x^2}$, we conclude that $f(x)=ce^{-\pi x^2}$.
\end{proof}

\subsection{Comparison to previous work}

Given a constant $\tau > 0$, we define
$$
V_\tau \coloneqq \left \{ F \in \mathcal{O}(\C) : \limsup_{r \to \infty} \frac{\log M_F(r)}{r^2} < \frac{\pi}{2\tau} \right \},
$$
where $M_F(r) = \max_{\theta \in \R} |F(re^{i\theta})|$ denotes the maximum modulus function of $F$. For the extremal case $\tau = \infty$, we define $V_\infty$ as the collection of all entire functions $F \in \mathcal{O}(\C)$ of order two and minimal type, that is, 
$$
\limsup_{r \to \infty} \frac{\log M_F(r)}{r^2} = 0.
$$
In his exposition on interpolatory function theory, Whittaker demonstrated that the lattice $\Z + i \Z$ is a Liouville set for $V_\infty$ \cite{whittaker}. Iyer proved a generalization, namely that $\Z + i \Z$ is a Liouville set for for $V_1$ \cite{iyer1936}. The same statement was independently proven a short time later by Pfluger \cite{pfluger}. Maitland showed that if $\Lambda = (\lambda_{nm})_{n,m \in \mathbb{Z}} \subseteq \mathbb{C}$ is a separated sequence satisfying
$$n-1 \leq  \re(\lambda_{nm}) \leq n, \quad m-1 \leq  \im(\lambda_{nm}) \leq m, \quad n,m \in \Z,$$
then $\Lambda$ is a Liouville set for $V_1$ \cite{Maitland1939OnAF}. It follows directly from the estimate \eqref{eq:growth_fock_space}, that the sets obtained by Whittaker, Iyer, Pfluger, and Maitland are Liouville sets for $\ft_\alpha(\C)$, provided that $0 < \alpha < \pi$. However, these results are restrictive in the sense that they neither cover lattices that lack a square structure nor sets that are uniformly close to a square lattice. In contrast, Theorem \ref{thm:uc_implies_liouville} above covers the results of Whittaker, Iyer, Pfluger, and Maitland, it implies that all sets of stable sampling are Liouville sets, and it yields a full characterization of lattice Liouville sets in terms of the Nyquist rate. The Nyquist rate is the decisive quantity that also characterizes all lattices that are sets of stable sampling or uniqueness sets.

Lastly, we reference Cartwright's discrete Liouville theorem pertaining to functions in $V_1$, which exhibit bounded behavior both along a sector's boundary and at lattice points within the same sector \cite{cartwright}.

\section{Auxiliary results}
In this section, we collect a couple of intermediate results which will be of fundamental importance for the proofs of the main results of this article.

\subsection{A first uniqueness statement}
We begin by establishing a result which will serve as a key piece to bridge the gap from the discrete sampling set to the full complex plane.

\begin{proposition}\label{lma:uniqueness_lemma}
Let $\Lambda \subseteq \C$ be a uniqueness set for $\ft_\alpha(\C)$. Further, let $F,H \in \ft_\alpha(\C)$, and let
$$
G \coloneqq FH(FH'-F'H).
$$
If $|F(\lambda)| = |H(\lambda)|$ for all $\lambda \in \Lambda$, and in addition, $G=0$, then $F \sim H$.
\end{proposition}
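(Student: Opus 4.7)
The plan is to exploit the fact that $\mathcal O(\C)$ is an integral domain together with the uniqueness set hypothesis. Since $G = F \cdot H \cdot W \equiv 0$ where $W \coloneqq F H' - F' H$ is the Wronskian, and all three factors are entire, at least one of $F, H, W$ must be identically zero. I would break the argument into two branches along these lines.

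First, I would dispose of the degenerate cases where $F \equiv 0$ or $H \equiv 0$. By symmetry, assume $F \equiv 0$. Then the hypothesis $|F(\lambda)| = |H(\lambda)|$ forces $H$ to vanish on all of $\Lambda$; since $\Lambda$ is a uniqueness set for $\ft_\alpha(\C)$ and $H \in \ft_\alpha(\C)$, we conclude $H \equiv 0$, and then trivially $F \sim H$ (for instance, $F = 1 \cdot H$).

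The main case is $F \not\equiv 0$ and $H \not\equiv 0$, so necessarily $W \equiv 0$. On the open set $\Omega \coloneqq \{z \in \C : H(z) \neq 0\}$, the quotient $F/H$ is holomorphic and satisfies
$$
\left(\frac{F}{H}\right)' = \frac{F'H - FH'}{H^2} = -\frac{W}{H^2} \equiv 0.
$$
Because $H \not\equiv 0$ is entire, its zero set is discrete, so $\Omega$ is connected (the complement of a discrete set in $\C$ is connected). Hence $F/H \equiv c$ on $\Omega$ for some $c \in \C$, which gives $F = c H$ on $\Omega$, and this identity extends to all of $\C$ by continuity (or by the identity theorem applied to $F - cH$).

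It remains to verify $c \in \T$. Here I would invoke the uniqueness set property again: since $H \not\equiv 0$ is in $\ft_\alpha(\C)$, $H$ cannot vanish on all of $\Lambda$, so there exists some $\lambda_0 \in \Lambda$ with $H(\lambda_0) \neq 0$. Then
$$
|c|\,|H(\lambda_0)| = |F(\lambda_0)| = |H(\lambda_0)|,
$$
yielding $|c| = 1$, which is exactly $F \sim H$. The only subtlety worth flagging is the need to handle the zero cases separately before applying the Wronskian argument, so that the division by $H^2$ and the choice of $\lambda_0$ are legitimate; the uniqueness-set hypothesis on $\Lambda$ is precisely what makes both of these steps go through.
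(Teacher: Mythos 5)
Your proposal is correct and follows essentially the same route as the paper: factor $G$ in the integral domain $\mathcal{O}(\C)$, dispose of the cases $F\equiv 0$ or $H\equiv 0$ via the uniqueness-set hypothesis, and in the Wronskian case show the quotient is a constant whose modulus is pinned to $1$ by evaluating at a point of $\Lambda$ where the denominator does not vanish. The only cosmetic differences are that you divide by $H$ instead of $F$ and make the connectedness of the zero-free region explicit; no gaps.
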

\begin{proof}
Since $G$ is the product of the three entire functions $F, H$, and $FH'-F'H$, it follows that at least one of them must be the zero function. If $F=0$ then the property that $|F(\lambda)| = |H(\lambda)|$ for all $\lambda \in \Lambda$ implies that $H$ vanishes on $\Lambda$. Since $\Lambda$ is a uniqueness set for $\ft_\alpha(\C)$ we have $H=0$. In particular $F \sim H$. An analogous argument shows that if $H=0$ then $F \sim H$. Finally, suppose that $FH'-F'H=0$ and that $F$ does not vanish identically. If $\Omega \subseteq \C$ is a zero-free region of $F$, then for every $z \in \Omega$ we have
$$
\left ( \frac{H(z)}{F(z)} \right )' = 0.
$$
Thus, there exists  $\tau \in \C$ such that $F=\tau H$. Since $F \neq 0$, it follows from the assumption on $\Lambda$ being a uniqueness set for $\ft_\alpha(\C)$, that there exists a $\lambda' \in \Lambda$ such that $F(\lambda') \neq 0$. Moreover, $|F(\lambda')| = |H(\lambda')|$. Consequently, $|F(\lambda')|(1-|\tau|) = 0$ which gives $|\tau|=1$.
\end{proof}

\subsection{First order phaseless information}

In the proof of the main result (Theorem \ref{thm:Main1}), the function $F'\overline{F}$, where $F \in \ft_\alpha(\C)$ is to be determined given its phaseless samples, plays a subtle but nevertheless central role.
We begin with making the observation that this information is encoded in $\nabla |F|^2$, the gradient of the squared modulus:
let $z_0\in\C$ be any point and let us identify $\C$ and $\R^2$ by virtue of $z=x+iy$.
Suppose we are given first order phaseless information
$(\partial_x |F|^2)(z_0)$ and $(\partial_y |F|^2)(z_0)$.
Since $F$ is holomorphic and since $\partial_z=\frac12 (\partial_x - i \partial_y)$, one can infer 
$$
(F'\overline{F})(z_0) = (\partial_z |F|^2)(z_0) = \frac12 (\partial_x |F|^2)(z_0) - \frac{i}2 (\partial_y |F|^2)(z_0),
$$
from the given information. The purpose of this section is to establish a more general and quantitative version of this consideration.
To this end, we denote for a function $Q:\C\simeq\R^2\rightarrow \C$, differentiable in the real variable sense (but not necessarily holomorphic), and $\theta\in [0,2\pi)$ an angle, its derivative at $z \in \C$ into $e^{i\theta}$-direction by
$$
\delta_\theta [Q](z)= \frac{\mbox{d}}{\mbox{d}t} Q(z+te^{i\theta})\Big|_{t=0}.
$$
We consider the case $Q=|F|^2-|H|^2$ with $F,H\in \ft_\alpha(\C)$ a pair of functions in the Fock space. 
If $\theta_1,\theta_2$ are distinct angles and if $\delta_{\theta_1}[Q], \delta_{\theta_2}[Q]$ vanish in the vicinity of a point $z_0\in\C$, we can control the deviation 
$$
|F'\overline{F} - H' \overline{H}|
$$
at $z_0$. This is the content of the following result.

\begin{proposition}\label{prop:zeroperturbation}
Let $\alpha,\varepsilon>0$, let $z_0\in\C$, and suppose that 
$$
0 \leq \eta\le  \min\left\{(2\alpha+\varepsilon)^{-1/2}, (2\alpha+\varepsilon)^{-1}|z_0|^{-1} \right\}.
$$
Moreover, let $F,H\in \ft_\alpha(\C)$, let  $\theta_1, \theta_2\in \R$ be two angles such that $\theta_1-\theta_2 \notin \pi \Z$, and assume that there exist $p_1,p_2 \in B_\eta(z_0)$ such that 
$$
\delta_{\theta_j}[|F|^2-|H|^2](p_j) = 0,\quad j\in\{1,2\}.
$$
Then it holds that 
$$
|(F'\overline{F}-H'\overline{H})(z_0)|\le M \frac{(|z_0|+1) e^{(\alpha+\frac\varepsilon2)|z_0|^2}}{|\sin(\theta_1-\theta_2)|} \eta,
$$
where $M$ is a constant depending only on $\alpha,\varepsilon,\|F\|_{\ft_\alpha(\C)}$, and $\|H\|_{\ft_\alpha(\C)}$.
\end{proposition}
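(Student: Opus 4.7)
The plan is to translate the two directional vanishing conditions at $p_1,p_2$ into a linear system that determines the value $g(z_0) \coloneqq (F'\overline{F}-H'\overline{H})(z_0)$ up to a small error, and then to bound that error via Fock space growth estimates for $F$, $H$ and their first two derivatives.

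First I would observe, by applying the chain rule and using holomorphicity, that
\[
\delta_\theta[|F|^2-|H|^2](z) = 2\re\bigl(e^{i\theta}\,g(z)\bigr),\qquad g \coloneqq F'\overline{F}-H'\overline{H},
\]
so the hypotheses read $\re(e^{i\theta_j}g(p_j))=0$ for $j=1,2$. Writing $g(p_j)=g(z_0)+\varepsilon_j$ and taking the real and imaginary parts of $g(z_0)$ as unknowns, these two equations form a real $2\times 2$ linear system whose coefficient matrix has determinant $\sin(\theta_1-\theta_2)\ne 0$. Cramer's rule then yields
\[
|g(z_0)| \le \frac{c\,(|\varepsilon_1|+|\varepsilon_2|)}{|\sin(\theta_1-\theta_2)|}
\]
for an absolute constant $c>0$.

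The next step is to bound each $|\varepsilon_j|=|g(p_j)-g(z_0)|$ by $\eta\cdot\sup_{z\in B_\eta(z_0)}|\nabla g(z)|$ via the mean value theorem, and then to control $|\nabla g|$ pointwise. A direct computation gives
\[
\partial_x g = F''\overline{F}+|F'|^2-H''\overline{H}-|H'|^2,\qquad \partial_y g = i(F''\overline{F}-|F'|^2)-i(H''\overline{H}-|H'|^2).
\]
I would then combine the standard Fock space growth bound $|F(z)|\le \|F\|_{\ft_\alpha(\C)}e^{\alpha|z|^2/2}$ with Cauchy's integral formula on circles of radius $(1+|z|)^{-1}$ to derive
\[
|F^{(k)}(z)| \le C_k\,\|F\|_{\ft_\alpha(\C)}\,(1+|z|)^k\,e^{\frac{\alpha}{2}|z|^2},\qquad k=0,1,2,
\]
and analogously for $H$. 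Inserting these bounds yields $|\nabla g(z)|\le C\,(1+|z|)^2\,e^{\alpha|z|^2}$.

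The last step is to transfer this estimate from $B_\eta(z_0)$ to $z_0$ and to match the advertised exponential rate $\alpha+\varepsilon/2$. The two-pronged assumption $\eta\le\min\{(2\alpha+\varepsilon)^{-1/2},(2\alpha+\varepsilon)^{-1}|z_0|^{-1}\}$ is precisely tailored so that $(|z_0|+\eta)^2\le |z_0|^2+c_{\alpha,\varepsilon}$ and $(1+|z|)^2\le C(1+|z_0|)^2$ for all $z\in B_\eta(z_0)$, giving $\sup_{B_\eta(z_0)}|\nabla g|\le C(1+|z_0|)^2e^{\alpha|z_0|^2}$. One factor of $(1+|z_0|)$ can finally be absorbed into $e^{\varepsilon|z_0|^2/2}$ using $1+t\le C_\varepsilon e^{\varepsilon t^2/2}$, producing the bound stated in the proposition. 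The main obstacle is the bookkeeping in this final step: the target exponent $\alpha+\varepsilon/2$ and the polynomial weight $(1+|z_0|)$ are tight, and the two bounds on $\eta$ are exactly calibrated to absorb the $\eta^2$ term and the cross term $2\alpha\eta|z_0|$ in the expansion of $(|z_0|+\eta)^2$ without spoiling the rate.
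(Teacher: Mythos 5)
Your argument is correct and reaches the stated bound with a constant of the required form, but the way you control the error differs from the paper in its key quantitative step. Both proofs start from the same identity $\delta_\theta[|F|^2-|H|^2]=2\re\bigl(e^{i\theta}(F'\overline F-H'\overline H)\bigr)$ and both extract the factor $|\sin(\theta_1-\theta_2)|^{-1}$ from a $2\times 2$ linear system in the angles (the paper expresses $\partial_z Q$ as $c_1R_1+c_2R_2$ with $|c|=|\sqrt2\sin(\theta_1-\theta_2)|^{-1}$ and then must bound $|R_j(z_0)|$ knowing only $R_j(p_j)=0$; you instead invert the perturbed system at $p_1,p_2$ by Cramer's rule and must bound $|g(p_j)-g(z_0)|$ --- the two bookkeeping schemes are equivalent). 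Where you genuinely diverge is in how that ``displacement error'' is estimated. The paper lifts $F'\overline F$ to the entire function $G(z_1,z_2)=F'(z_1+iz_2)F^*(z_1-iz_2)$ of two complex variables, shows $G\in\ft_{2\alpha+\varepsilon}(\C^2)$ with a controlled norm (Lemma \ref{lem:FprimeFbarest}), and then uses the reproducing-kernel Lipschitz estimate for real parts (Lemmas \ref{lem:realpartest} and \ref{lem:estdist}) in the metric $\dist_\beta$; the hypothesis on $\eta$ enters exactly through the hypotheses of Lemma \ref{lem:estdist}. You bypass the $\C^2$ machinery entirely: the mean value theorem plus Cauchy estimates on circles of radius $(1+|z|)^{-1}$ give $|\nabla g(z)|\le C(1+|z|)^2e^{\alpha|z|^2}$, and the two constraints on $\eta$ absorb the cross term $2\alpha\eta|z_0|$ and the $\eta^2$ term when passing from $B_\eta(z_0)$ to $z_0$. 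Your route is more elementary and self-contained; it costs an extra factor $(1+|z_0|)$, which you correctly absorb into $e^{\varepsilon|z_0|^2/2}$, whereas the paper's RKHS route produces the linear weight $(|z_0|+1)$ directly and reuses lemmas that are natural in the Fock-space framework of the rest of the article. One minor point worth making explicit in a write-up: $g$ is not holomorphic, so the mean value step should be phrased via the integral form $g(p_j)-g(z_0)=\int_0^1 Dg(z_0+t(p_j-z_0))(p_j-z_0)\,dt$ applied to the real and imaginary parts; with that said, your computation of $\partial_xg$ and $\partial_yg$ and the resulting estimate are sound.
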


Before we turn towards the proof of the latter statement we collect three intermediate results. These results involve the function
$$
\dist_\alpha(z,w) \coloneqq \|k_z- k_w\|_{\ft_\alpha(\C^d)}, \quad z,w\in\C^d,
$$
where $k_z$ denotes the reproducing vector in $\ft_\alpha(\C^d)$ with respect to $z \in \C^d$. The latter defines a metric on $\C^d$. The following result makes matters more explicit and provides a link to the Euclidean metric.

\begin{lemma}\label{lem:estdist}
    For all $z,w\in\C^d$ it holds that 
    $$
    \dist_\alpha(z,w) = \sqrt{e^{\alpha|z|^2}- 2 \re (e^{\alpha z\cdot\bar{w}}) + e^{\alpha|w|^2}}.
    $$
    Moreover, if $|z-w|\le \min\left\{\alpha^{-1/2}, \alpha^{-1}|z|^{-1} \right\}$, then
    $$
    \dist_\alpha(z,w) \le 4 |z-w| e^{\frac{\alpha|z|^2}2} \left(\alpha |z| + \sqrt\alpha \right).
    $$
\end{lemma}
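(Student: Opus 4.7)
The plan for the first identity is a direct computation: expanding the squared norm bilinearly yields
\[
\dist_\alpha(z,w)^2 = \langle k_z, k_z\rangle_{\ft_\alpha(\C^d)} - \langle k_z, k_w\rangle_{\ft_\alpha(\C^d)} - \langle k_w, k_z\rangle_{\ft_\alpha(\C^d)} + \langle k_w, k_w\rangle_{\ft_\alpha(\C^d)},
\]
and the reproducing property together with the explicit form $k_z(w) = e^{\alpha w \cdot \bar z}$ evaluates each of the four inner products; the two cross terms are complex conjugates and combine into $2\re(e^{\alpha z \cdot \bar w})$, which is the first claim.

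For the quantitative bound I would write $w = z + h$ with $h \coloneqq w - z$, factor $e^{\alpha|z|^2}$ out of the identity just proved, and set $u \coloneqq \alpha z \cdot \bar h$. A brief calculation together with the algebraic identity $|e^u - 1|^2 = e^{2\re u} - 2\re(e^u) + 1$ rewrites the result in the factorized form
\[
\dist_\alpha(z,w)^2 = e^{\alpha|z|^2}\bigl(\,|e^u - 1|^2 + e^{2\re u}(e^{\alpha|h|^2}-1)\,\bigr).
\]
This separates the deviation into a phase contribution $|e^u-1|^2$ and a magnitude contribution $e^{2\re u}(e^{\alpha|h|^2}-1)$, each of which is controlled by exactly one of the two smallness hypotheses on $|h|$.

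The bound $|h| \leq \alpha^{-1}|z|^{-1}$ gives $|u| \leq \alpha|z||h| \leq 1$, hence $|e^u - 1| \leq e\,|u|$ and $e^{2\re u} \leq e^{2}$. The bound $|h| \leq \alpha^{-1/2}$ gives $\alpha|h|^2 \leq 1$, hence $e^{\alpha|h|^2} - 1 \leq 2\alpha|h|^2$. Substituting these into the factorized identity yields
\[
\dist_\alpha(z,w)^2 \leq e^{\alpha|z|^2}\, e^{2}\, |h|^2 \bigl(\alpha^2|z|^2 + 2\alpha\bigr),
\]
after which the claimed inequality follows by taking square roots and using $\sqrt{\alpha^2|z|^2 + 2\alpha} \leq \sqrt{2}\,(\alpha|z| + \sqrt\alpha)$ together with $e\sqrt{2} < 4$.

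The entire argument is largely bookkeeping; the only mildly clever step is the algebraic rewriting via $|e^u - 1|^2 = e^{2\re u} - 2\re(e^u) + 1$, which cleanly separates the two sources of growth so that each can be handled by its own hypothesis on $|h|$. I do not anticipate any genuine obstacle beyond picking the right form of the elementary exponential inequalities and tracking the numerical constants carefully enough to bring the factor $e\sqrt{2}$ under the target constant $4$.
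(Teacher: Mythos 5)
Your proposal is correct and follows essentially the same route as the paper: the same bilinear expansion via the reproducing property for the identity, and the same factorization $\dist_\alpha(z,w)^2 = e^{\alpha|z|^2}\bigl(|e^u-1|^2 + e^{2\re u}(e^{\alpha|h|^2}-1)\bigr)$ for the bound, with each term controlled by one of the two hypotheses on $|h|$. The only differences are cosmetic choices of elementary exponential inequalities (the paper uses $|e^u-1|\le(e-1)|u|$ and arrives at the constant via $\sqrt{13}\le 4$, while you use $e|u|$ and $e\sqrt{2}<4$), both of which land under the stated constant.
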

\begin{proof}
    Recall that the reproducing kernel is $k_w(z)=e^{\alpha z\cdot \bar{w}}$.
    Making use of the defining property of the reproducing kernel, gives that
    \begin{equation}
        \begin{split}
            \dist_\alpha(z,w)^2 &= \langle k_z - k_w, k_z - k_w\rangle_{\ft_\alpha(\C^d)} \\
            & = k_z(z)-k_z(w)-k_w(z)+k_w(w) \\
            & = e^{\alpha|z|^2}- 2 \re (e^{\alpha z\cdot\bar{w}}) + e^{\alpha|w|^2}.
        \end{split}
    \end{equation}
    This proves the first part of the statement. For the second part, we denote $\delta \coloneqq w-z$ and get that
    \begin{equation}
        \begin{split}
            \dist_\alpha(z,w)^2 &= e^{\alpha|z|^2}- 2 \re (e^{\alpha z\cdot \bar{\delta}}) e^{\alpha|z|^2}+ e^{\alpha|z+\delta|^2}\\
          &= e^{\alpha|z|^2} \left(1 -2 \re(e^{\alpha z\cdot\bar{\delta}})+e^{\alpha(2\re(z\cdot\bar{\delta})+|\delta|^2)} \right).
        \end{split}
    \end{equation}
    The expression inside the brackets is equal to
    $$
    A=\left|e^{\alpha z\cdot \bar{\delta}} -1 \right|^2 + e^{2\alpha\re(z\cdot\bar{\delta})} (e^{\alpha|\delta|^2}-1).
    $$
    Note that $|e^u-1| \le (e-1)|u|$ for all $u \in \C$ with $|u|\le 1$.
    The assumption for the second statement implies that $\max\{\alpha |z\cdot\bar{\delta}|, \alpha|\delta|^2\}\le 1$. Hence, we can bound
    $$
    A \le (e-1)^2 \alpha^2 |z|^2 |\delta|^2 + e^{2\alpha\re(z\cdot \bar{\delta})} (e-1) \alpha|\delta|^2.
    $$
    Since $e^{2\alpha\re(z\cdot \bar{\delta})} \leq e^{2\alpha|z\cdot \bar{\delta}|} \leq e^2$,
    we obtain that 
    \begin{equation}
        \begin{split}
            \dist_\alpha(z,w)^2 = e^{\alpha|z|^2} A & \le e^{\alpha|z|^2}|\delta|^2
    \left((e-1)^2\alpha^2|z|^2+e^2(e-1)\alpha \right) \\
    & \le 13 e^{\alpha|z|^2}|\delta|^2 \left(\alpha^2|z|^2+\alpha \right),
        \end{split}
    \end{equation}
    which implies the second claim.
\end{proof}

An application of the Cauchy-Schwarz inequality yields Lipschitz estimates for functions in $\ft_\alpha(\Cd)$ with respect to the metric $\dist_\alpha$. We will need something a bit more specific concerning the real part of such functions.
\begin{lemma}\label{lem:realpartest}
 Let $G\in \ft_\alpha(\C^d)$ and suppose that $\zeta\in \C^d$ is such that $\re \, G(\zeta)=0$.
 Then it holds for all $\zeta'\in \C^d$ that 
 $$
 |\re \, G(\zeta')|\le \|G\|_{\ft_\alpha(\C^d)} \dist_\alpha(\zeta',\zeta).
 $$
\end{lemma}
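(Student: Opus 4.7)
The plan is to exploit the reproducing kernel structure of the Fock space $\ft_\alpha(\C^d)$, which makes the asserted inequality follow essentially from Cauchy--Schwarz once we rewrite differences $G(\zeta')-G(\zeta)$ as inner products against $k_{\zeta'}-k_\zeta$. The hypothesis $\re\, G(\zeta)=0$ is precisely what allows us to replace $\re\, G(\zeta')$ by the real part of this difference.

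More concretely, I would first invoke the reproducing property $G(w)=\langle G, k_w\rangle_{\ft_\alpha(\C^d)}$, valid for every $w\in\C^d$, to write
$$
G(\zeta') - G(\zeta) \;=\; \langle G,\, k_{\zeta'}-k_\zeta\rangle_{\ft_\alpha(\C^d)}.
$$
Applying the Cauchy--Schwarz inequality together with the very definition $\dist_\alpha(\zeta',\zeta)=\|k_{\zeta'}-k_\zeta\|_{\ft_\alpha(\C^d)}$ then yields the modulus bound
$$
|G(\zeta')-G(\zeta)| \;\le\; \|G\|_{\ft_\alpha(\C^d)}\, \dist_\alpha(\zeta',\zeta).
$$

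Finally, using $\re\, G(\zeta)=0$, I would rewrite $\re\, G(\zeta') = \re\bigl(G(\zeta')-G(\zeta)\bigr)$ and pass to absolute values, which gives
$$
|\re\, G(\zeta')| \;\le\; |G(\zeta')-G(\zeta)| \;\le\; \|G\|_{\ft_\alpha(\C^d)}\, \dist_\alpha(\zeta',\zeta),
$$
as claimed. There is no genuine obstacle here: the only substantive ingredient beyond Cauchy--Schwarz is the identification of $\|k_{\zeta'}-k_\zeta\|_{\ft_\alpha(\C^d)}$ with $\dist_\alpha(\zeta',\zeta)$, which is built into the definition recalled right before Lemma \ref{lem:estdist}.
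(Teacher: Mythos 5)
Your proposal is correct and follows essentially the same route as the paper: both rewrite $\re\, G(\zeta')$ as $\re\bigl(G(\zeta')-G(\zeta)\bigr)=\re\bigl(\langle G, k_{\zeta'}-k_\zeta\rangle_{\ft_\alpha(\C^d)}\bigr)$ via the reproducing property and then apply Cauchy--Schwarz together with the definition of $\dist_\alpha$. No issues.
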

\begin{proof}
 We rewrite
 $$
 \re \, G(\zeta') = \re \, G(\zeta')- \re\, G(\zeta) = \re\left(\langle G, k_{\zeta'} -k_{\zeta}\rangle_{\ft_\alpha(\C^d)} \right).
 $$
 The Cauchy-Schwarz inequality implies that 
 $$
 |\re \, G(\zeta')| \le \big| \langle G, k_{\zeta'} -k_{\zeta}\rangle_{\ft_\alpha(\C^d)}\big| \le \|G\|_{\ft_\alpha(\C^d)} \|k_{\zeta'}-k_{\zeta}\|_{\ft_\alpha(\C^d)}.
 $$
\end{proof}
The relevant case for our purposes is $d=2$. 
The following construction allows us to conceive  $F'\overline{F}$ as the restriction to $\R^2\subseteq \C^2$ of an entire function of two complex variables.
This will enable us to apply above estimates to $F'\overline{F}$, even though the function is not holomorphic (in the $1d$-sense) itself.

\begin{lemma}\label{lem:FprimeFbarest}

Suppose that $F\in\ft_\alpha(\C)$, and define 
$$
G(z_1,z_2):=F'(z_1+iz_2)F^*(z_1-iz_2).
$$
Then it holds for all $\beta>2\alpha$ that $G$ is an element of $\ft_\beta(\C^2)$, and satisfies the estimate
$$
\|G\|_{\ft_\beta(\C^2)} \le \frac{\beta^2}{(\beta-2\alpha)^{3/2}}\|F\|_{\ft_\alpha(\C)}^2.
$$
\end{lemma}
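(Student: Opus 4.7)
The plan is to reduce $\|G\|_{\ft_\beta(\C^2)}^2$ to a product of one-variable Fock space norms for $F$ and $F'$ via a holomorphic change of variables, and then bound those two factors by $\|F\|_{\ft_\alpha}$ using the monomial expansion of $F$. Entirety of $G$ is immediate since $F^*$ is entire by definition and $(z_1,z_2)\mapsto z_1\pm iz_2$ are $\C$-linear, so both factors in the definition of $G$ are entire on $\C^2$.

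For the norm estimate I would introduce the substitution $u = z_1+iz_2$, $v = z_1-iz_2$. The associated $\C$-linear map has complex determinant $-2i$, so its induced real Jacobian equals $|{-2i}|^2=4$, giving $dA(z_1)\,dA(z_2) = \tfrac{1}{4}\,dA(u)\,dA(v)$. A direct expansion yields $|u|^2+|v|^2 = 2(|z_1|^2+|z_2|^2)$, so the Gaussian weight separates as $e^{-\beta(|z_1|^2+|z_2|^2)}=e^{-\frac{\beta}{2}|u|^2}e^{-\frac{\beta}{2}|v|^2}$. Using $|F^*(v)|^2 = |F(\bar v)|^2$ together with the Lebesgue-measure-preserving change $v\mapsto\bar v$, the double integral separates, and after book-keeping the normalization (the factor $(\beta/\pi)^2\cdot\tfrac{1}{4}$ cancels against two factors of $2\pi/\beta$ picked up when passing to the probability measure $d\mu_{\beta/2}$) one obtains the clean identity
$$\|G\|_{\ft_\beta(\C^2)}^2 = \|F'\|_{\ft_{\beta/2}(\C)}^2 \cdot \|F\|_{\ft_{\beta/2}(\C)}^2.$$

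It then remains to control both one-variable norms by $\|F\|_{\ft_\alpha}^2$. Writing $F(z)=\sum_n a_n z^n$ and invoking the monomial orthogonality $\|z^n\|_{\ft_\gamma}^2 = n!/\gamma^n$, a short computation with $q:=2\alpha/\beta\in(0,1)$ gives
$$\|F\|_{\ft_{\beta/2}}^2 = \sum_n |a_n|^2 \tfrac{n!}{\alpha^n} q^n \le \|F\|_{\ft_\alpha}^2, \qquad \|F'\|_{\ft_{\beta/2}}^2 = \alpha \sum_{n\ge 1} n q^{n-1} |a_n|^2 \tfrac{n!}{\alpha^n}.$$
The main (and really only) obstacle is the factor $n$ arising from differentiation. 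It is overcome by the elementary pointwise bound $n q^{n-1} \le \sum_{k\ge 1} k q^{k-1} = (1-q)^{-2} = \beta^2/(\beta-2\alpha)^2$, which is valid because all summands are nonnegative. This yields $\|F'\|_{\ft_{\beta/2}}^2 \le \frac{\alpha\beta^2}{(\beta-2\alpha)^2}\|F\|_{\ft_\alpha}^2$.

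Combining these estimates one arrives at
$$\|G\|_{\ft_\beta(\C^2)}^2 \le \frac{\alpha\beta^2}{(\beta-2\alpha)^2}\|F\|_{\ft_\alpha}^4 \le \frac{\beta^4}{(\beta-2\alpha)^3}\|F\|_{\ft_\alpha}^4,$$
the second inequality being equivalent to $\alpha(\beta-2\alpha)\le\beta^2$, which holds since $\alpha<\beta/2$. Taking square roots yields the required bound.
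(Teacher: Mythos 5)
Your proof is correct, and it takes a genuinely different route from the paper's. The paper argues pointwise: it inserts the growth estimates $|F(w)|\le e^{\frac{\alpha}{2}|w|^2}\|F\|_{\ft_\alpha(\C)}$ and $|F'(w)|\le \sqrt{\alpha(1+\alpha|w|^2)}\,e^{\frac{\alpha}{2}|w|^2}\|F\|_{\ft_\alpha(\C)}$ (the latter quoted from the literature), uses $|z_1+iz_2|^2+|z_1-iz_2|^2=2|z|^2$ to collapse the weight, and evaluates the resulting Gaussian integral over $\C^2$, arriving at $\|G\|_{\ft_\beta(\C^2)}^2\le \frac{\alpha\beta^2(\beta+2\alpha)}{(\beta-2\alpha)^3}\|F\|_{\ft_\alpha(\C)}^4$ before relaxing to $\frac{\beta^4}{(\beta-2\alpha)^3}$. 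You instead exploit the same algebraic identity to perform an exact factorization: the substitution $(u,v)=(z_1+iz_2,z_1-iz_2)$ with real Jacobian $4$ gives the clean identity $\|G\|_{\ft_\beta(\C^2)}^2=\|F'\|_{\ft_{\beta/2}(\C)}^2\|F\|_{\ft_{\beta/2}(\C)}^2$, and the two one-variable norms are then controlled via the monomial orthogonality $\|z^n\|_{\ft_\gamma}^2=n!/\gamma^n$; all your coefficient manipulations check out (Tonelli applies since everything is nonnegative, and the bound $nq^{n-1}\le (1-q)^{-2}$ is valid). Your approach buys self-containedness — it avoids the cited pointwise derivative estimate entirely — and in fact yields the slightly sharper constant $\frac{\sqrt{\alpha}\,\beta}{\beta-2\alpha}$, which you correctly verify dominates nothing lost: it is bounded by the stated $\frac{\beta^2}{(\beta-2\alpha)^{3/2}}$ precisely because $\alpha(\beta-2\alpha)\le\beta^2$. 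The paper's approach, by contrast, generalizes more readily to situations where no exact separation of variables is available.
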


\begin{proof}
    First, recall that a function $F\in\ft_\alpha(\C)$ satisfies the pointwise growth estimate 
    $|F(w)| \le e^{\frac\alpha2 |w|^2} \|F\|_{\ft_\alpha(\C)}$. Moreover, a similar type of estimate is available for the derivative \cite[equation (2.5)]{haslinger}:
    \begin{equation}
            |F'(w)| \le \sqrt{\alpha (1+\alpha|w|^2)} e^{\frac\alpha2 |w|^2} \|F\|_{\ft_\alpha(\C)}.
    \end{equation}
    Let us denote $z=(z_1,z_2)^T\in\C^2$. By employing the two pointwise bounds on $|F|$ and $|F'|$, it follows that
    \begin{equation}
        \begin{split}
            \|G\|_{\ft_\beta(\C)}^2 &= \left(\frac\beta\pi\right)^2 \int_{\C^2} |F'(z_1+iz_2)|^2 |F(\overline{z_1}+i\overline{z_2})|^2 e^{-\beta|z|^2}\, \mbox{d}A(z)\\
            &\le \frac{\alpha\beta^2}{\pi^2}
            \|F\|_{\ft_\alpha(\C)}^4\int_{\C^2} (1+\alpha|z_1+iz_2|^2)
            e^{\alpha |z_1+iz_2|^2 + \alpha |z_1-iz_2|^2 -\beta|z|^2}
            \mbox{d}A(z)\\
            &= \frac{\alpha\beta^2}{\pi^2} \|F\|_{\ft_\alpha(\C)}^4\int_{\C^2} (1+\alpha|z_1+iz_2|^2)
            e^{-(\beta-2\alpha)|z|^2}
            \mbox{d}A(z) \\
            & \leq \frac{\alpha\beta^2}{\pi^2} \|F\|_{\ft_\alpha(\C)}^4 \int_{\C^2} (1+2\alpha|z|^2) e^{-(\beta-2\alpha)|z|^2}\,\mbox{d}A(z),
        \end{split}
    \end{equation}
   where we used that $|z_1+iz_2|^2+|z_1-iz_2|^2=2|z|^2$ and that $|z_1+iz_2| \leq \sqrt{2}|z|$.
   The integral on the right-hand side of the previous equation exists, provided that $\beta > 2 \alpha$. In this case, it evaluates to
$$
\int_{\C^2} (1+2\alpha|z|^2) e^{-(\beta-2\alpha)|z|^2}\,\mbox{d}A(z) = \frac{\pi^2(\beta+2\alpha)}{(\beta-2\alpha)^3}.
$$
Consequently, we arrive at
$$
\|G\|_{\ft_\beta(\C^2)}^2 \le \|F\|_{\ft_\alpha(\C)}^4 \frac{\alpha\beta^2 (\beta+2\alpha)}{(\beta-2\alpha)^3} \le \|F\|_{\ft_\alpha(\C)}^4 \frac{\beta^4}{(\beta-2\alpha)^3},
$$
where we used that $\beta > 2 \alpha$ in the last inequality.    
\end{proof}

We are prepared to prove the result stated at the beginning of this paragraph.
\begin{proof}[Proof of Proposition \ref{prop:zeroperturbation}]
    Let us denote $Q=|F|^2-|H|^2$ and $R_j=\delta_{\theta_j}[Q]$, $j\in\{1,2\}$. First observe, that with $\partial_z = \frac{1}{2}(\partial_x - i \partial_y)$ it holds that
\begin{equation}
    \partial_z Q = F'\overline{F}-H'\overline{H}.
\end{equation}
Now set $\tau=\theta_1-\theta_2$, and consider the linear system
$$
    \begin{pmatrix}
        \cos\theta_1 &\cos\theta_2\\
        \sin\theta_1 &\sin\theta_2
    \end{pmatrix}
    \begin{pmatrix}
        c_1\\ c_2
    \end{pmatrix}
    =
    \frac12 
    \begin{pmatrix}
        1\\ -i
    \end{pmatrix}
    $$
with its solution 
    $$
    c=
    \begin{pmatrix}
        c_1\\ c_2
    \end{pmatrix}
    =
    - \frac1{2\sin\tau} 
    \begin{pmatrix}
        \sin\theta_2 &-\cos\theta_2\\
        -\sin\theta_1 &\cos\theta_1
    \end{pmatrix}
    \begin{pmatrix}
        1\\-i
    \end{pmatrix},
    $$
    which has Euclidean length $|c|=| \sqrt{2} \sin\tau|^{-1}$.
    Using the definition of $R_j$ as a derivative into $e^{i\theta_j} \simeq \begin{pmatrix}
        \cos \theta_j \\ \sin \theta_j
    \end{pmatrix}$-direction yields
    $$
    R_j = \begin{pmatrix}
        \cos\theta_j\\ \sin\theta_j
    \end{pmatrix}
    \cdot \nabla Q = \begin{pmatrix}
        \re(e^{i\theta_j})\\ \im(e^{i\theta_j})
    \end{pmatrix}
    \cdot \begin{pmatrix}
        \partial_x Q\\ \partial_y Q
    \end{pmatrix}.
    $$
    In addition, one has
    $$
    2e^{i\theta_j} \partial_z Q = \re(e^{i\theta_j}) \partial_x Q + i \im(e^{i\theta_j}) \partial_x Q - i \re(e^{i\theta_j}) \partial_y Q + \im(e^{i\theta_j}) \partial_y Q.
    $$
    Therefore, we obtain the relation
    \begin{equation}\label{rjj}
    R_j = \re(2e^{i\theta_j} \partial_z Q).
    \end{equation}
    Hence,
    $$
    c_1 R_1 + c_2 R_2 = \frac12 \begin{pmatrix}
        1\\-i
    \end{pmatrix}
    \cdot \nabla Q  =\frac12 \left(\partial_x- i \partial_y \right) Q = \partial_z Q = F'\overline{F}-H'\overline{H}.
    $$
    An application of the Cauchy-Schwarz inequality gives
    \begin{equation}\label{eq:estitoR12}
    |(F'\overline{F}-H'\overline{H})(z_0)|\le |\sqrt{2} \sin\tau|^{-1} \sqrt{|R_1(z_0)|^2+ |R_2(z_0)|^2}.
    \end{equation}
    We continue by upper bounding the terms $|R_1(z_0)|$ and $|R_2(z_0)|$. To do so, we define for $j \in \{1,2\}$ functions $G_j : \C^2 \to \C$ via
    $$
    G_j(z_1,z_2) = 2e^{i\theta_j} (F'(z_1+iz_2)F^*(z_1-iz_2)-H'(z_1+iz_2)H^*(z_1-iz_2)).
    $$
    This definition implies that for all $z \in \C$ it holds that
    $$
    G_j(\re(z),\im(z)) = 2 e^{i\theta_j} (F'(z)\overline{F(z)} - H'(z)\overline{H(z)}).
    $$
    Using \eqref{rjj}, it follows that
    $$
    R_j(z) = \re (G_j(\re(z),\im(z))).
    $$
    Now let $\beta = 2\alpha + \varepsilon$. According to Lemma \ref{lem:FprimeFbarest} one has
    $$
    \| G_j\|_{\ft_\beta(\C^2)} \leq \frac{(2\alpha+\varepsilon)^2}{\varepsilon^{3/2}}
    \left( \|F\|_{\ft_\alpha(\C)}^2 +\|H\|_{\ft_\alpha(\C)}^2 \right).
    $$
    For $z_0 \in \C$ and $p_1,p_2 \in B_\eta(z_0)$ given as 
    in the assumption of the claim, set
    $$
    \zeta=(\re(p_j), \im(p_j))\in\C^2\quad\text{and} \quad \zeta'=(\re(z_0),\im(z_0))\in\C^2.
    $$
    The previous considerations in conjunction with Lemma \ref{lem:realpartest} yields
    \begin{equation}
        \begin{split}
            |R_j(z_0)| & \leq \| G_j\|_{\ft_\beta(\C^2)} \dist_\beta(\zeta',\zeta) \\
            & \leq \frac{(2\alpha+\varepsilon)^2}{\varepsilon^{3/2}}
    \left( \|F\|_{\ft_\alpha(\C)}^2 +\|H\|_{\ft_\alpha(\C)}^2 \right) \dist_\beta(\zeta',\zeta).
        \end{split}
    \end{equation}
We continue by upper bounding the term $\dist_\beta(\zeta',\zeta)$ via Lemma \ref{lem:estdist}. To do so, we observe that $|z_0|=|\zeta'|$ and that
$$
|\zeta' - \zeta| = |p_j-z_0| \leq \eta  \leq \min\left\{\beta^{-1/2}, \beta^{-1}|\zeta'|^{-1} \right\}.
$$
Therefore, the assumptions of Lemma \ref{lem:estdist} are fullfilled and we get
\begin{equation}
    \begin{split}
        \dist_\beta(\zeta',\zeta) & \leq 4 \eta e^{(\alpha - \frac{\varepsilon}{2})|z_0|^2} \left ( (2\alpha + \varepsilon)|z_0| + \sqrt{2\alpha + \varepsilon} \right ) \\
        & \leq 4 \eta e^{(\alpha - \frac{\varepsilon}{2})|z_0|^2} (2\alpha + \varepsilon + 1)(|z_0|+1).
    \end{split}
\end{equation}
Hence, by setting
$$
M \coloneqq 4 \frac{(2\alpha+\varepsilon)^2}{\varepsilon^{3/2}} (2\alpha + \varepsilon + 1) \left( \|F\|_{\ft_\alpha(\C)}^2 +\|H\|_{\ft_\alpha(\C)}^2 \right) ,
$$
we obtain
$$
|R_j(z_0)| \leq M \eta e^{(\alpha - \frac{\varepsilon}{2})|z_0|^2} (|z_0|+1).
$$
Plugging in the latter bound in \eqref{eq:estitoR12} yields
$$
|(F'\overline{F}-H'\overline{H})(z_0)|\le M  \frac{(|z_0|+1) e^{(\alpha+\frac\varepsilon2)|z_0|^2}}{|\sin(\theta_1-\theta_2)|}  \eta,
$$
as announced.
\end{proof}

\subsection{Perturbation of Liouville sets}
Next, we require a lemma which indicates, that the property of being a Liouville set is invariant under a certain type of perturbation.

\begin{lemma}\label{lma:liouville_invariance}
    Let $\Lambda \subseteq \C$ be a Liouville set for $\ft_\alpha(\C)$, and let
    $$
    f(z)=e^{-\gamma |z|^2}, \quad \gamma > \alpha/2.
    $$
    If $\Gamma = (\gamma_\lambda)_{\lambda \in \Lambda} \subseteq \C$ is $f$-close to $\Lambda$, then $\Gamma$ is a Liouville set for $\ft_\alpha(\C)$.
\end{lemma}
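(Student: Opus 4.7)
The strategy is to reduce the boundedness problem on $\Gamma$ to one on $\Lambda$ via a derivative estimate. Concretely, suppose $F \in \ft_\alpha(\C)$ is bounded on $\Gamma$, say $|F(\gamma_\lambda)| \leq L$ for all $\lambda \in \Lambda$. The goal is to show that $F$ remains bounded on $\Lambda$, after which the Liouville property of $\Lambda$ forces $F$ to be constant.

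The main tool is the pointwise derivative estimate for functions in Fock space, namely
$$
|F'(z)| \leq \sqrt{\alpha(1+\alpha|z|^2)}\, e^{\frac{\alpha}{2}|z|^2} \|F\|_{\ft_\alpha(\C)},
$$
combined with the $f$-closeness hypothesis, which gives a constant $\kappa > 0$ such that $|\gamma_\lambda - \lambda| \leq \kappa e^{-\gamma |\lambda|^2}$. Applying the mean value inequality along the straight segment from $\lambda$ to $\gamma_\lambda$ (every point of which has modulus at most $|\lambda| + \kappa$, since $f \leq 1$), I would bound
$$
|F(\lambda) - F(\gamma_\lambda)| \leq \sup_{z \in [\lambda,\gamma_\lambda]} |F'(z)| \cdot |\lambda - \gamma_\lambda| \leq C (1+|\lambda|) \, e^{\frac{\alpha}{2}(|\lambda|+\kappa)^2} \cdot \kappa e^{-\gamma |\lambda|^2}
$$
for a suitable constant $C$ depending only on $\alpha$, $\kappa$ and $\|F\|_{\ft_\alpha(\C)}$.

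The key observation is that the exponent on the right-hand side equals
$$
-\bigl(\gamma - \tfrac{\alpha}{2}\bigr)|\lambda|^2 + \alpha \kappa |\lambda| + \tfrac{\alpha}{2}\kappa^2 + \log(1+|\lambda|) + \text{const},
$$
and since $\gamma > \alpha/2$ by hypothesis, the negative quadratic term dominates the linear and logarithmic contributions as $|\lambda| \to \infty$. Hence the right-hand side is uniformly bounded in $\lambda \in \Lambda$ by some constant $M$. Combining with $|F(\gamma_\lambda)| \leq L$ yields $|F(\lambda)| \leq L + M$ for every $\lambda \in \Lambda$. As $\Lambda$ is a Liouville set for $\ft_\alpha(\C)$, $F$ must be constant, completing the proof. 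There is no real obstacle here; the only delicate point is bookkeeping to make sure the Gaussian decay of $|\lambda - \gamma_\lambda|$ with rate $\gamma$ strictly beats the Gaussian growth of $|F'|$ with rate $\alpha/2$, which is exactly what the hypothesis $\gamma > \alpha/2$ provides.
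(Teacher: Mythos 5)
Your proof is correct and follows essentially the same route as the paper: both reduce the problem to showing that $F$ is bounded on $\Lambda$ by controlling the increment $|F(\lambda)-F(\gamma_\lambda)|$ and exploiting that the Gaussian decay rate $\gamma$ of the perturbation strictly beats the growth rate $\alpha/2$. The only immaterial difference is the technical tool: the paper bounds the increment via the reproducing-kernel Lipschitz estimate $|F(\lambda)-F(\gamma_\lambda)|\le \|F\|_{\ft_\alpha(\C)}\dist_\alpha(\lambda,\gamma_\lambda)$ together with Lemma \ref{lem:estdist}, whereas you use the pointwise derivative bound and the mean value inequality, and both yield a bound of the same shape $C(1+|\lambda|)\,e^{\frac{\alpha}{2}|\lambda|^2}\,|\lambda-\gamma_\lambda|$.
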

\begin{proof}
    Let $C>0$ and $F \in \ft_\alpha(\C)$ such that $|F(\gamma_\lambda)| \leq C$ for all $\lambda \in \Lambda$. We have to show that $F$ is a constant function. To this end, we observe that for all $\lambda\in \Lambda$
    $$
    |F(\lambda)| \le |F(\gamma_\lambda)-F(\lambda)| + |F(\gamma_\lambda)| \le \dist_\alpha(\lambda,\gamma_\lambda) \|F\|_{\ft_\alpha(\C)} + C.
    $$
    For $R>0$ sufficiently large, we have according to Lemma \ref{lem:estdist}  that 
    \begin{equation}\label{eq:lambda_gamma}
        \mathrm{dist}_\alpha(\lambda,\gamma_\lambda) \leq 4|\lambda - \gamma_\lambda|e^{\frac{\alpha}{2}|\lambda|^2}(\alpha |\lambda| + \sqrt{\alpha})
    \end{equation}
    for all $\lambda \in \Lambda$ which satisfy $|\lambda| > R$. The closeness assumption on $\Gamma$ implies that the right-side in equation \eqref{eq:lambda_gamma} is bounded. Hence $F$ is bounded on $\Lambda$, which is a Liouville set by assumption. Consequently, $F$ is a constant function.
\end{proof}

\subsection{Integrability lemma}

Finally, we require a technical lemma concerning the difference $FH'-F'H$ with $F,H$ elements in the Fock space.

\begin{lemma}\label{lem:Fprimegrowth}
    Let $\alpha > 0$, and let $F,H \in \ft_\alpha(\C)$. Then $FH'-F'H \in \ft_{2\alpha}(\C)$.
\end{lemma}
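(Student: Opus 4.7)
The function $G := FH' - F'H$ is entire by construction, so the only substantive point is to verify that $\int_\C |G(z)|^2 e^{-2\alpha|z|^2}\, dA(z) < \infty$. I first note that a naive pointwise estimate is insufficient: combining the growth bound \eqref{eq:growth_fock_space} with the derivative estimate $|F'(z)| \le \sqrt{\alpha(1+\alpha|z|^2)}\, e^{\alpha|z|^2/2} \|F\|_{\ft_\alpha(\C)}$ used in the proof of Lemma \ref{lem:FprimeFbarest} only yields $|G(z)|^2 \le C(1+|z|^2) e^{2\alpha|z|^2}$, so the integrand decays at most like $1+|z|^2$ and is not integrable. A finer argument exploiting cancellation is needed; the natural setting for this is the monomial basis.

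The approach is to work with the orthonormal basis $\bigl\{\sqrt{\alpha^n/n!}\, z^n\bigr\}_{n\ge 0}$ of $\ft_\alpha(\C)$ and the corresponding basis for $\ft_{2\alpha}(\C)$. Writing $F(z)=\sum_n a_n z^n$ and $H(z)=\sum_n b_n z^n$, one has $\|F\|_{\ft_\alpha(\C)}^2 = \sum_n |a_n|^2 n!/\alpha^n$ and analogously for $H$. A direct computation gives
\begin{equation*}
G(z) = \sum_{k \ge 0} c_k z^k, \qquad c_k = \sum_{n+m = k+1} (m-n)\, a_n b_m,
\end{equation*}
so that $\|G\|_{\ft_{2\alpha}(\C)}^2 = \sum_k |c_k|^2\, k!/(2\alpha)^k$. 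I now apply Cauchy--Schwarz to $c_k$ by splitting each summand as
\begin{equation*}
a_n b_m\sqrt{\tfrac{n!m!}{\alpha^{n+m}}} \;\cdot\; \tfrac{(m-n)\alpha^{(n+m)/2}}{\sqrt{n!m!}},
\end{equation*}
which separates the ``Fock-normalized'' coefficients from a combinatorial weight.

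The crux of the argument is the elementary identity
\begin{equation*}
\sum_{n=0}^{p} \binom{p}{n}(p-2n)^2 \;=\; p\cdot 2^p,
\end{equation*}
which one verifies by expanding $(p-2n)^2$ and using the standard moments $\sum_n n\binom{p}{n}$ and $\sum_n n^2 \binom{p}{n}$ (equivalently: the variance of $\mathrm{Binomial}(p,\tfrac12)$). With $p=k+1$, this converts the Cauchy--Schwarz weight into $\frac{\alpha^{k+1} 2^{k+1}}{k!}$, and multiplying by $k!/(2\alpha)^k$ yields exactly the factor $2\alpha$. Summing over $k$ and recognizing the resulting double sum over pairs $(n,m)$ with $n+m \ge 1$ as dominated by $\|F\|_{\ft_\alpha(\C)}^2 \|H\|_{\ft_\alpha(\C)}^2$, one arrives at
\begin{equation*}
\|G\|_{\ft_{2\alpha}(\C)}^2 \;\le\; 2\alpha \, \|F\|_{\ft_\alpha(\C)}^2 \|H\|_{\ft_\alpha(\C)}^2,
\end{equation*}
which is the desired conclusion. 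The only real obstacle is organizing the bookkeeping so that the factorials, the powers of $\alpha$, and the factor $2$ (distinguishing $\ft_{2\alpha}(\C)$ from $\ft_\alpha(\C)$) combine correctly; the combinatorial identity above is what makes the endpoint $2\alpha$ achievable.
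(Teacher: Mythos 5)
Your argument is correct, and it takes a genuinely different route from the paper. The paper exploits the same Wronskian cancellation but via the polyanalytic decomposition: writing $\tilde H(z)=H'(z)-\alpha\bar z H(z)$, which is known to lie in $L^2(\C,d\mu_\alpha)$ (the true polyanalytic Fock space of order two), one finds that the $\alpha\bar z$ terms cancel in $FH'-F'H=F\tilde H-\tilde F H$, and each remaining product is then controlled by pairing the pointwise bound \eqref{eq:growth_fock_space} for one factor with the $L^2(d\mu_\alpha)$ membership of the other. Your power-series computation is more elementary and self-contained --- it avoids the citation to the polyanalytic Fock space literature entirely, at the cost of the coefficient bookkeeping --- and the binomial variance identity $\sum_{n=0}^{p}\binom{p}{n}(p-2n)^2=p\,2^p$ is exactly right (it is $2^p\,\mathbb{E}[(p-2X)^2]=2^p\cdot 4\operatorname{Var}(X)$ for $X\sim\mathrm{Binomial}(p,\tfrac12)$), so the Cauchy--Schwarz weight becomes $\alpha^{k+1}2^{k+1}/k!$ and the factor $k!/(2\alpha)^k$ from the $\ft_{2\alpha}(\C)$ norm collapses everything to $2\alpha$. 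As a bonus your route yields the explicit quantitative estimate $\|FH'-F'H\|_{\ft_{2\alpha}(\C)}^2\le 2\alpha\,\|F\|_{\ft_\alpha(\C)}^2\|H\|_{\ft_\alpha(\C)}^2$, which is cleaner than what falls out of the paper's argument; your opening observation that naive pointwise bounds only give $|G(z)|^2\lesssim(1+|z|^2)e^{2\alpha|z|^2}$ and therefore cannot work is also accurate and correctly identifies why some form of cancellation must be used.
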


\begin{proof}
We only have to show that $FH'-F'H\in L^2(\C, \mbox{d}\mu_{2\alpha})$ as this function is obviously entire.
It is well known, that if $H \in \ft_\alpha(\C)$, then the function $\tilde H (z) \coloneqq H'(z) - \alpha \overline{z} H(z)$ belongs to the polyanalytic Fock space of order two, i.e., $\tilde H \in L^2(\C,\mbox{d}\mu_\alpha)$, and $\partial_{\bar{z}}^2 \tilde H = 0$, where $\partial_{\bar{z}} = \frac{1}{2}(\partial_x + i \partial_y)$ \cite{abreupolyanalytic}. Now observe, that
\begin{equation}\label{eq:finid_intlemma}
    \begin{split}
         (FH'-F'H)(z) &= F(z) \left(\alpha \overline{z} H(z) + \tilde H(z) \right)
        - \left(\alpha\overline{z}F(z) + \tilde F(z) \right) H(z) \\
        & = F(z)  \tilde H(z) 
        -\tilde F(z)  H(z). 
    \end{split}
\end{equation}
By invoking \eqref{eq:growth_fock_space}, we get that 
\begin{align*}
    \int_\C |F(z)  \tilde H(z)|^2 \,\mbox{d}\mu_{2\alpha}(z) 
    &= \int_\C |F(z)|^2 e^{-\alpha|z|^2} |\tilde H(z)|^2 e^{-\alpha|z|^2}\, \mbox{d}A(z) \\
    &\le 
    \|F\|_{\ft_\alpha(\C)}^2 \| \tilde H\|_{L^2(\C, \mbox{d}\mu_\alpha)}^2 < \infty.
\end{align*}
With this (and by interchanging roles of $F$ and $H$) we find that 
both terms on the right hand side of \eqref{eq:finid_intlemma} are members of $L^2(\C, \mbox{d}\mu_{2\alpha})$, and as a consequence $FH'-F'H$ has the same property. This concludes the proof.
\end{proof}

\section{Proofs of the main results}

\subsection{Deterministic perturbation of Liouville sets}\label{subsec:deterministicfock}

We are prepared to prove the first main result of the present exposition.

\begin{proof}[Proof of Theorem \ref{thm:Main1}]
First we observe that we can assume without loss of generality that 
$
    \Lambda \cap B_1(0) = \emptyset.
$
Indeed, since the property of being a Liouville set is invariant under removing a bounded set of points we may just consider $\Lambda\setminus B_1(0)$ instead of $\Lambda$.
Recall that by assumption, condition \eqref{cond:anglesgeneral} is satisfied. That is, 
$$
\exists \beta\in (0, \gamma-2\alpha), \, \exists L>0: \quad \frac{|\lambda|e^{-\beta|\lambda|^2}}{\varphi(a_\lambda,b_\lambda,c_\lambda)} \leq L, \quad \lambda\in\Lambda.
$$
In particular, all of the triangles $\Delta(a_\lambda,b_\lambda,c_\lambda)$ are non-degenerate. Without loss of generality, we can assume that for each $\lambda \in \Lambda$, the angle $\varphi(a_\lambda,b_\lambda,c_\lambda)$ is the acute angle enclosed between the lines 
$c_\lambda+\R(a_\lambda-c_\lambda)$ and $c_\lambda+\R(b_\lambda-c_\lambda)$. This can always be achieved by means of a pairwise interchange of the elements in $A,B,C$. It follows from Lemma \ref{lma:liouville_invariance} and the assumption on $\Lambda$, that $C$ is a Liouville set for $\ft_{4\alpha}(\C)$. This implies that $C$ is a uniqueness set for $\ft_{4\alpha}(\C)$ and for $\ft_{\alpha}(\C)$.
Suppose $F,H\in\ft_\alpha(\C)$ are such that 
    $$
    |F(\lambda)|= |H(\lambda)|, \quad \lambda \in A\cup B \cup C.
    $$
    We want to show that $F\sim H$. As per Proposition \ref{lma:uniqueness_lemma} it suffices to prove that 
    $$
    G= FH(FH'-F'H)
    $$
     vanishes identically. Utilizing Lemma \ref{lem:Fprimegrowth} implies that $G\in \ft_{4\alpha}(\C)$. Consequently, it suffices to show that 
$G$ is bounded on $C$ and that 
    $$\inf_{\lambda\in\Lambda} |G(c_\lambda)|=0.$$

To show this, we proceed as follows. We use $K_1,K_2,\ldots$ for positive constants which crucially do not depend on $\lambda\in\Lambda$.
As $C$ is $f$-close to $\Lambda$, there exists $K_1>0$ such that 
\begin{align*}
\big| |c_\lambda|^2-|\lambda|^2\big| &= (|c_\lambda|+|\lambda|)  \big||c_\lambda|-|\lambda|\big| \\
&\le 
(2|\lambda|+|c_\lambda-\lambda|) |c_\lambda-\lambda| \\ &\le (2|\lambda|+ K_1 e^{-\gamma|\lambda|^2}) K_1 e^{-\gamma|\lambda|^2}.
\end{align*}
In particular, there exists $K_2$ (independent from $\lambda\in\Lambda$) such that 
\begin{equation}\label{K22}
    \big| |c_\lambda|^2-|\lambda|^2\big|\le K_2.
\end{equation}
Consequently, 
$$
f(\lambda) = e^{-\gamma|\lambda|^2} = e^{-\gamma|c_\lambda|^2} e^{\gamma(|c_\lambda|^2-|\lambda|^2)}
\le e^{-\gamma|c_\lambda|^2} e^{\gamma K_2}.
$$
Since $A,C$ are $f$-close to $\Lambda$, it follows that there exists $K_3>0$ such that 
$$
|a_\lambda-c_\lambda|\le |a_\lambda-\lambda| + |c_\lambda-\lambda| \le K_3 f(\lambda) \le
K_3 e^{\gamma K_2} e^{-\gamma|c_\lambda|^2}
,\quad \lambda\in \Lambda.
$$
Thus, $A$ is $f$-close to $C$. We can argue in the same way for $B$ instead of $A$ to get that $B$ is $f$-close to $C$. Therefore, both $A$ and $B$ are both $f$-close to $C$, i.e., there exists $\kappa>0$ such that 
$$
\max\left\{ |a_\lambda-c_\lambda| , |b_\lambda-c_\lambda| \right\} \le \eta(\lambda), \quad \lambda\in\Lambda, 
$$
where $\eta(\lambda)\coloneqq \kappa e^{-\gamma |c_\lambda|^2}$.

We seek for an application of Proposition \ref{prop:zeroperturbation}. To this end, we introduce for every $\lambda \in\Lambda$ angles $\theta_{1,\lambda}$ and $\theta_{2,\lambda}$ by
    \begin{equation*}
    \theta_{1,\lambda}=  \arg(a_\lambda-c_\lambda),\quad 
    \theta_{2,\lambda} = \arg(b_\lambda-c_\lambda).
    \end{equation*}
    Note that 
    $$\varphi(a_\lambda,b_\lambda,c_\lambda) = \pm (\theta_{1,\lambda}-\theta_{2,\lambda}) \mod 2\pi,\quad \lambda\in\Lambda.$$
    By assumption, we have that $Q=|F|^2-|H|^2$ vanishes at the endpoints of the segment connecting $c_\lambda$ and $a_\lambda$. As per Rolle's theorem there exists a point $p_{1,\lambda}$ on the segment where the derivative satisfies $\delta_{\theta_{1,\lambda}} [Q](p_{1,\lambda})=0$.
    One can argue analogously for $b_\lambda$ instead of $a_\lambda$.
    Hence, we get that
    $$\exists p_{1,\lambda},p_{2,\lambda}\in B_{\eta(\lambda)}(c_\lambda):
    \quad 
    \delta_{\theta_{j,\lambda}}[Q](p_{j,\lambda}) = 0 , \, j\in\{1,2\}.
    $$
    Let $\varepsilon :=\gamma-2\alpha-\beta>0$, and let
     $R_0\ge1$ be chosen in such a way that for all $c_\lambda\in C$ with $|c_\lambda|>R_0$ one has
    $$
    \eta(\lambda)\le \min\left\{ \left( 2\alpha+\varepsilon \right )^{-1/2}, \left (2\alpha+\varepsilon \right)^{-1} \cdot |c_\lambda|^{-1}\right\}.
    $$
     Clearly, this condition is satisfied if $R_0$ is sufficiently large.
    In the following, we assume that $c_\lambda\in C$ satisfies $|c_\lambda|>R_0$.
    Proposition \ref{prop:zeroperturbation} implies that there exists a constant $M=M(\alpha, \varepsilon, \|F\|_{\ft_\alpha(\C)},\|H\|_{\ft_\alpha(\C)})$ such that
\begin{equation}
    \begin{split}
        |(F'\overline{F}-H'\overline{H})(c_\lambda)| & \leq M \frac{(|c_\lambda|+1)e^{(\alpha +\frac\varepsilon2)|c_\lambda|^2}}{|\sin(\theta_{1,\lambda} - \theta_{2,\lambda})|} \eta(\lambda) \\
        & \leq 2 \kappa M \frac{|c_\lambda|e^{(\alpha +\frac\varepsilon2)|c_\lambda|^2}}{|\sin(\varphi(a_\lambda,b_\lambda,c_\lambda))|} e^{-\gamma|c_\lambda|^2} \\
        & \leq \pi \kappa M \frac{|c_\lambda|e^{(\alpha +\frac\varepsilon2)|c_\lambda|^2}}{\varphi(a_\lambda,b_\lambda,c_\lambda)} e^{-\gamma|c_\lambda|^2} \\
        & \leq \pi \kappa M L \frac{|c_\lambda|}{|\lambda|} e^{(\alpha +\frac\varepsilon2)|c_\lambda|^2} e^{-\gamma|c_\lambda|^2} e^{\beta|\lambda|^2}.
    \end{split}
\end{equation}
In the third inequality we used that
$
\sin\vartheta \ge \frac2\pi \vartheta
$
for all $\vartheta \in [0,\frac{\pi}{2}]$, and in the fourth inequality we used the assumption on $\varphi(a_\lambda,b_\lambda,c_\lambda)$.
Since
$$
    |c_\lambda| \le |\lambda| + |\lambda-c_\lambda| \le |\lambda|+K_1 \le (K_1+1)|\lambda|,
$$
it follows that there exists a constant $K_4 > 0$ such that
$$
|(F'\overline{F}-H'\overline{H})(c_\lambda)| \leq K_4 e^{(\alpha +\frac\varepsilon2)|c_\lambda|^2} e^{-\gamma|c_\lambda|^2} e^{\beta|\lambda|^2}.
$$
By noticing that $|F(z)|^2e^{-\alpha|z|^2}$ is bounded in $\C$, and that $|F(c_\lambda)| = |H(c_\lambda)|$, it follows that there exists a constant $K_5>0$ such that
\begin{equation}
    \begin{split}
        |G(c_\lambda)| &= |F(c_\lambda)| |H(c_\lambda)| |(FH'-F'H)(c_\lambda)| \\
        & = |(|H|^2 FH' - |F|^2 F' H)(c_\lambda)| \\
        &= |F(c_\lambda)|^2 |(H'\overline{H}-F'\overline{F})(c_\lambda)| \\
        & \leq K_5 e^{\alpha|c_\lambda|^2} e^{(\alpha +\frac\varepsilon2)|c_\lambda|^2} e^{-\gamma|c_\lambda|^2} e^{\beta|\lambda|^2}.
    \end{split}
\end{equation}
By the choice of $\varepsilon$ we have that
$$
2\alpha + \frac{\varepsilon}{2} - \gamma = - \beta - \frac{\varepsilon}{2}.
$$
This, in combination with equation \eqref{K22} yields
$$
e^{\alpha|c_\lambda|^2} e^{(\alpha +\frac\varepsilon2)|c_\lambda|^2} e^{-\gamma|c_\lambda|^2} e^{\beta|\lambda|^2} = e^{-\frac\varepsilon2 |c_\lambda|^2} e^{\beta(|\lambda|^2-|c_\lambda|^2)} \leq e^{-\frac\varepsilon2 |c_\lambda|^2} e^{ \beta K_2}.
$$
As a result, there exists a constant $K_6>0$ such that
$$
|G(c_\lambda)| \leq K_6 e^{-\frac\varepsilon2 |c_\lambda|^2}.
$$
Since $C$ is necessarily unbounded we obtain that $\inf_{\lambda\in\Lambda} |G(c_\lambda)| =0.$
On the other hand, we have that 
    $$
    \sup_{\lambda\in\Lambda} |G(c_\lambda)| \le \max\left\{\sup_{|z|\le R_0} |G(z)|, K_6  \right\} < \infty.
    $$
    This finishes the proof.
\end{proof}

\begin{remark}[On the sharpness of Theorem \ref{thm:Main1}]\label{rem:sharpness}
Theorem \ref{thm:Main1} relies on an interplay between two conditions: $f$-closeness of $A,B,C$ to a Liouville set $\Lambda$ with respect to $f(z)=e^{-\gamma |z|^2}$, and the angle condition \eqref{cond:anglesgeneral}. We emphasize that dropping one of the two conditions implies that $\mathcal{U} = A \cup B \cup C$ is in general not a uniqueness set for phase retrieval in Fock space anymore. To see this, we assume for simplicity that $\Lambda$ is a lattice satsfying $s(\Lambda) < \frac{\pi}{4\alpha}$. According to Theorem \ref{thm:Main3}, $\Lambda$ is a Liouville set for $\ft_{4\alpha}(\C)$.

First, consider the case where $A,B,C$ are merely $f$-close to $\Lambda$ but \eqref{cond:anglesgeneral} is not fulfilled. In this case, one can choose $A,B,C$ in such a way that
\begin{equation}\label{eq:incl}
    \mathcal{U} \subseteq e^{i\theta} (\R \times \nu \Z)
\end{equation}
for suitable $\nu >0$ and $\theta \in \R$. That is, $\mathcal{U}$ is contained in a set of equidistant parallel lines. According to \cite[Theorem 1]{alaifari2020phase}, $\mathcal{U}$ is not a uniqueness set for the phase retrieval problem in $\ft_\alpha(\C)$.

On the other hand, suppose that $A,B,C$ have the property that $\mathcal{U} \subseteq \Gamma$, where $\Gamma \subseteq \C$ is a lattice, and that condition \eqref{cond:anglesgeneral} is fullfilled. Such a choice for $A,B,C$ can be done without $A,B,C$ being $f$-close to $\Lambda$. For instance, we can assume that $A,B,C$ are merely uniformly close to $\Lambda$. Since lattices are never uniqueness sets for the phase retrieval problem in $\ft_\alpha(\C)$, it follows that $\mathcal{U}$ is not a uniqueness set for phase retrieval in Fock space, although condition  \eqref{cond:anglesgeneral} is satisfied.
\end{remark}

\subsection{Random perturbation of Liouville sets}\label{subsec:randomsampfock}

This section is devoted to the proof of Theorem \ref{thm:Main4}, which states that the union of three random perturbations of a Liouville set forms a uniqueness set for the phase retrieval problem almost surely.

We start with the observation that three randomly picked points $A,B,C$ in the unit disk (uniformly distributed) are almost surely noncollinear. In particular, the resulting triangle $\Delta(A,B,C)$ is almost surely non-degenerate. 
We require a quantitative version of this observation.

\begin{lemma}\label{lem:3randompts}
    Let $\Omega\subseteq \C$ be a disk in the complex plane.
    Let $A,B,C$ be independent and identically distributed complex random variables, uniformly distributed on $\Omega$.
    Then it holds for all $\varepsilon > 0$ that 
    $$
    \mathbb{P}[\varphi(A,B,C)< \varepsilon] \le 4 \varepsilon.
    $$
\end{lemma}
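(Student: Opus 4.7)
The plan is to reduce the three-point event to a single obtuse-angle event, and then exploit the fact that, for fixed $B,C$, the set of $A$ making $\angle BAC$ very obtuse is a thin geometric lens.

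First, I would observe that $\varphi(A,B,C) < \varepsilon$ forces at least two of the three interior angles of the triangle to lie below $\varepsilon$, so that the remaining angle necessarily exceeds $\pi - 2\varepsilon$. Since $A,B,C$ are identically distributed, a union bound yields
$$
\mathbb{P}[\varphi(A,B,C) < \varepsilon] \leq 3\,\mathbb{P}[\angle BAC > \pi - 2\varepsilon],
$$
where $\angle BAC$ denotes the interior angle of the triangle at $A$. This reduces the claim to a single estimate.

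Next, I would condition on $B$ and $C$ and identify the bad region for $A$. By the inscribed-angle theorem, the locus $L_{B,C} \coloneqq \{z \in \C : \angle BzC > \pi - 2\varepsilon\}$ is the lens bounded by two circular arcs through $B$ and $C$ of radius $|B-C|/(2\sin 2\varepsilon)$. An elementary sagitta computation (the sagitta equals $\tfrac{1}{2}|B-C|\tan\varepsilon$) shows that this lens fits inside a rectangle of side lengths $|B-C|$ and $|B-C|\tan\varepsilon$, hence $|L_{B,C}| \leq |B-C|^{2} \tan\varepsilon$. Since $A$ is independent of $(B,C)$ and uniform on the disk $\Omega$ of radius $r$, this produces
$$
\mathbb{P}[A \in L_{B,C} \mid B, C] \leq \frac{|L_{B,C}|}{\pi r^{2}} \leq \frac{|B-C|^{2} \tan\varepsilon}{\pi r^{2}}.
$$

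Finally, I would integrate. A direct calculation (after translating the center of $\Omega$ to the origin) yields $\mathbb{E}[|B-C|^{2}] = 2\,\mathbb{E}[|B|^{2}] = r^{2}$, so $\mathbb{P}[\angle BAC > \pi - 2\varepsilon] \leq \tan(\varepsilon)/\pi$. Combined with the union bound and the convexity estimate $\tan\varepsilon \leq 4\varepsilon/\pi$ on $[0,\pi/4]$, this gives $\mathbb{P}[\varphi(A,B,C) < \varepsilon] \leq 12\varepsilon/\pi^{2} \leq 4\varepsilon$ in that range, while for $\varepsilon > \pi/4$ one has $4\varepsilon > \pi \geq 1$ so the bound is automatic. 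The main obstacle I expect is the geometric step: verifying that $L_{B,C}$ really is the described lens and extracting an area bound that is simultaneously linear in $\varepsilon$ and quadratic in $|B-C|$, which is exactly the combination that matches the available second moment $\mathbb{E}[|B-C|^{2}]$ and hence produces the correct linear dependence on $\varepsilon$.
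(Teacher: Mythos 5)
Your proposal is correct, and it takes a genuinely different route from the paper. The paper conditions on two points $a,b$ and bounds, \emph{uniformly} over $a,b$ in the disk, the probability that the third point lands in a strip of half-width $\delta$ around the line through $a$ and $b$; outside that strip it shows directly that the triangle has an acute angle $\psi$ at $a$ or $b$ with $\tan\psi\ge\delta/2$, whence $\varphi(a,b,c)\ge 0.4\,\delta$, and choosing $\delta=\varepsilon/0.4$ gives the constant $10/\pi\le 4$. You instead pass from the median angle to the maximal angle ($\varphi<\varepsilon$ forces some vertex angle to exceed $\pi-2\varepsilon$), pay a factor $3$ in a union bound, and then describe the bad region for the remaining vertex as the inscribed-angle lens over the chord $BC$; your sagitta computation and the rectangle bound $|L_{B,C}|\le|B-C|^2\tan\varepsilon$ are correct, as is $\mathbb{E}\bigl[|B-C|^2\bigr]=r^2$ and the convexity bound $\tan\varepsilon\le 4\varepsilon/\pi$ on $[0,\pi/4]$ (with the trivial bound covering $\varepsilon>\pi/4$). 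What each approach buys: the paper's strip argument is shorter, needs no union bound and no moment computation, and its area bound is uniform in the conditioned points because it exploits the diameter of the disk; your lens argument is scale-aware in $|B-C|$, requires the extra integration step, but lands on the sharper constant $12/\pi^2\approx 1.22$ versus the paper's $10/\pi\approx 3.18$. Both comfortably beat $4$, so for the purposes of the paper the two are interchangeable.
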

\begin{proof}
Without loss of generality, we assume that $\Omega=\mathbb{D}$.
It is obvious that $A,B,C$ are pairwise distinct with probability $1$.
Let $a,b\in\mathbb{D}$ be arbitrary, and let $L(a,b) = a+\R(b-a)$ denote the line through $a$ and $b$.
Moreover, let $\delta > 0$ and  let $$S_\delta(a,b) =\{z\in \mathbb{D}: \dist(L(a,b),z)<\delta\}.$$
We note that if $c\in \mathbb{D} \setminus S_\delta(a,b)$, then 
the triangle $\Delta(a,b,c)$ has an acute angle $\psi$ at either $a$ or $b$ (or both) which satisfies $\tan\psi \ge \frac\delta2$.
As $\arctan(x)\ge 2 \arctan(1/2) x$ for $x\in[0, \frac{1}{2} ]$, we get that 
\begin{equation}
\varphi(a,b,c) \ge \psi \ge \arctan(\delta/2) \ge \arctan(1/2) \delta \ge 0.4 \delta.
\end{equation}
\begin{center}
\begin{figure}[ht]
\begin{tikzpicture}[scale=1.5]
\draw [fill=lightgray,dotted] (0,0) circle (1);
\draw [thick,dash dot] (-1.1,0.1) -- (1.1,0.1);
\draw [thick] (-1.3,0.4) -- (1.3,0.4);
\draw [thick] (-1.3,-0.2) -- (1.3,-0.2);
\draw[blue, fill=blue] (0.7,0.5) circle [radius=1pt];
\draw[red, fill=red] (-0.8,0.1) circle [radius=1pt];
\draw[red, fill=red] (-0.5,0.1) circle [radius=1pt];
\node[blue] at (0.7,0.65) {c};
\node[red] at (-0.8,-0.04) {a};
\node[red] at (-0.5,-0.04) {b};
\draw [<->] (1.2,0.4) -- (1.2,-0.2);
\node at (1.33,0.1) {$2\delta$};
\end{tikzpicture}
\end{figure}
\end{center}
Since $
    \mathbb{P}[C \in  S_\delta(a,b)] \le  \frac4\pi \delta,
$
it follows that for $\delta = \frac{\varepsilon}{0.4}$, we have
$$
\mathbb{P}[ \varphi(a,b,C) \geq \varepsilon ] = \mathbb{P}[ \varphi(a,b,C) \geq 0.4 \delta ] \geq \mathbb P [C \not \in S_\delta(a,b) ] \geq 1 - \frac{4}{\pi}\delta.
$$
Consequently,
$$
\mathbb P [\varphi(a,b,C) < \varepsilon] \leq \frac{4 \varepsilon}{0.4 \pi} \leq 4 \varepsilon,
$$
and therefore $\mathbb P [\varphi(A,B,C) < \varepsilon] \leq 4 \varepsilon$.
\end{proof}

Notice, that Lemma \ref{lem:3randompts} makes a probabilistic statement on the acute angle $\varphi(a,b,c)$ of the triangle $\Delta(a,b,c)$ in a disk. Related probabilistic problems on acute angles in triangles were studied, for instance, in \cite{trii1,trii2}.
We are equipped to prove the uniqueness result regarding three random perturbations of a Liouville set for $\ft_{4\alpha}(\C)$.

\begin{proof}[Proof of Theorem \ref{thm:Main4}]
We show that the statement holds true when the assumption that $\Lambda$ has finite density is replaced by the weaker condition that $\Lambda$ is countable and that
\begin{equation}
    \exists \beta\in (0,\gamma-2\alpha):\quad \sum_{\lambda\in\Lambda} |\lambda| e^{-\beta|\lambda|^2} <\infty.
\end{equation}

Clearly, we have that $A=\{Z_{\lambda,1} : \lambda\in\Lambda\}$, $B=\{Z_{\lambda,2} : \lambda\in\Lambda\}$ and $C=\{Z_{\lambda,3} : \lambda\in\Lambda\}$ are noncollinear almost surely.
In order to deduce the desired assertion from Theorem \ref{thm:Main1}, it remains to verify that condition \eqref{cond:anglesgeneral} holds almost surely, i.e., to show that the sequence
$$  \frac{|\lambda|e^{-\beta|\lambda|^2}}{\varphi(A_\lambda,B_\lambda,C_\lambda)}, \quad \lambda  \in\Lambda
$$
is bounded with probability $1$.
To this end, we define the events
$$
E\coloneqq \left\{ \sup_{\lambda\in\Lambda} \frac{|\lambda| e^{-\beta|\lambda|^2}}{\varphi(A_\lambda, B_\lambda, C_\lambda)} < \infty\right\},
$$
as well as 
$$
E_L \coloneqq \left\{ \sup_{\lambda\in\Lambda}  \frac{|\lambda| e^{-\beta|\lambda|^2}}{\varphi(A_\lambda, B_\lambda, C_\lambda)} \le L\right\}, \quad L>0.
$$
By continuity of measure, we have that $\mathbb{P}[E]=\lim_L \mathbb{P}[E_L]$.
With Lemma \ref{lem:3randompts} we get
\begin{align*}
    \mathbb{P}[E_L] &= 1 - \mathbb{P} \left [\exists \lambda\in\Lambda: \, \frac{|\lambda| e^{-\beta|\lambda|^2}}{\varphi(A_\lambda, B_\lambda, C_\lambda)} > L \right]\\
    &= 1 - \mathbb{P}\left [\exists \lambda\in\Lambda: \varphi(A_\lambda,B_\lambda,C_\lambda) < \frac{|\lambda|e^{-\beta|\lambda|^2}}{L} \right]\\
    &\ge 1- \sum_{\lambda\in\Lambda} \mathbb{P}\left [\varphi(A_\lambda,B_\lambda,C_\lambda) < \frac{|\lambda|e^{-\beta|\lambda|^2}}{L} \right ]\\
    &\ge 1 - \frac4{L} \sum_{\lambda\in\Lambda} |\lambda|e^{-\beta|\lambda|^2}.
\end{align*}
Recall that $\sum_{\lambda\in\Lambda} |\lambda|e^{-\beta|\lambda|^2} < \infty$ by assumption. Therefore, the right hand side converges to $1$ as $L\to\infty$. Thus, 
$\mathbb{P}[E] = \lim_L \mathbb{P}[E_L] = 1$,
meaning that \eqref{cond:anglesgeneral} holds almost surely, and we are done.
\end{proof}

\begin{remark}
    Theorem \ref{thm:Main4} could have been stated slightly more generally:
    the proof only uses that $(Z_{\lambda,1}, Z_{\lambda,2}, Z_{\lambda_3})$ are independent for each $\lambda\in\Lambda$, 
    which is weaker than the condition demanded in Theorem \ref{thm:Main4} (i.e., that the sampling points are picked independently across the full index set $\Lambda\times\{1,2,3\}$).
    Corresponding statements hold true for Theorems \ref{thm:Main5}, \ref{thm:Main6} and \ref{thm:Main7}.
\end{remark}

\subsection{Sub-classes with symmetry properties}

The contents presented in this section provide the foundation for proving the Gabor phase retrieval results in the spaces of real-valued and even real-valued functions. We will primarily investigate two classes of entire functions which obey certain symmetry conditions, and are defined by 
$$
\mathcal{S}_- := \{ F\in \mathcal{O}(\C): F(-z) = F(z)\}, \quad \mathcal{S}^* := \{F\in\mathcal{O}(\C): F^* = F\}.
$$
Further, we denote their intersection by $\mathcal{S}_-^* \coloneqq \mathcal{S}_- \cap \mathcal{S}^*$.
The symmetry classes $\mathcal{S}_-$ and $\mathcal{S}^*$ arise naturally as the images of suitable sub-classes of $L^2(\R)$ under the Bargmann transform $\mathcal{B} : \lt \to \ft_\pi(\C)$.
Recall that $L^2(\R,\R)\subseteq L^2(\R)$ denotes the subspace of real-valued functions. By $L^2_e(\R)\subseteq L^2(\R)$ we denote the subspace of even functions. With this, we have the relations  
    $$
    \mathcal{B}(L^2(\R,\R)) = \ft_\pi(\C) \cap \mathcal{S}^*, \quad 
    \text{and} \quad 
    \mathcal{B}(L^2_e(\R)) = \ft_\pi(\C) \cap \mathcal{S}_-.
    $$
    Hence, for the intersection consisting precisely of all even and real-valued functions, denoted by $L^2_e(\R,\R)$, we have that 
    $$
    \mathcal{B}(L^2_e(\R,\R)) = \ft_\pi (\C)\cap \mathcal{S}_-^*.
    $$
We proceed by introducing two variants of Liouville sets which are geometrically linked to the subclasses $\mathcal{S}^*$ and $\mathcal{S}^*_-$ in Fock space.

\begin{definition}
    Let $V\subseteq \mathcal{O}(\C)$.
    We say that $\Lambda\subseteq\C$ is a  
    $\nicefrac12$-Liouville set  ($\, \nicefrac14$-Liouville set, resp.) for $V$ if 
    $$\Lambda\cup \overline{\Lambda}\, \quad \big[\Lambda \cup \overline{\Lambda} \cup (-\Lambda) \cup (-\overline{\Lambda}), \,\text{resp.}\big]
    $$
    is a Liouville set for $V$.
\end{definition}
Simple examples of $\nicefrac12$- and $\nicefrac14$-Liouville sets for Fock spaces are obtained by restricting suitable lattices to the upper (or lower) half plane or to one of the quadrants, respectively.
For instance, if $\Gamma=a\Z + i b\Z$ with $s(\Gamma)=ab<\frac\pi\alpha$, then it is easy to see that $\Lambda = \Gamma\cap \overline{\mathbb{H}_+}$ is a $\nicefrac12$-Liouville set for $\ft_\alpha(\C)$. Indeed, as per Theorem \ref{thm:Main3} we have that 
$$
\Gamma = \Lambda \cup \overline{\Lambda} 
$$
is a Liouville set for $\ft_\alpha(\C)$. Similarly, if one only takes the points of $\Gamma$ which are located in the closed first quadrant one ends up with a $\nicefrac14$-Liouville set for $\ft_\alpha(\C)$.
At this junction, we can formulate two uniqueness theorems for the phase retrieval problem in $\ft_\pi(\C) \cap\mathcal{S}^*$ and $\ft_\pi(\C) \cap\mathcal{S}^*_-$ which employ the notions of $\nicefrac12$-Liouville set and $\nicefrac14$-Liouville set. The first one reads as follows.

\begin{proposition}\label{prop:1214}
    Let $\alpha  > 0$, let $f:\C\to [0,\infty)$ be given by $f(z) = e^{-\gamma|z|^2}$, $\gamma>2\alpha$, and let $\Lambda \subseteq \C$. Suppose that $A,B,C \subseteq \C$ are $f$-close to $\Lambda$, and that 
    \begin{equation}\label{eq:bd_condition}
        \exists \beta\in (0,\gamma-2\alpha),\, \exists L>0: \quad \frac{|\lambda|e^{-\beta|\lambda|^2}}{\varphi(a_\lambda,b_\lambda,c_\lambda)} \le L, \quad \lambda\in\Lambda.
    \end{equation}
    Then the following holds.
    \begin{enumerate}
        \item If $\Lambda$ is a $\nicefrac12$-Liouville set for $\ft_{4\alpha}(\C)$, then every set $\mathcal{U} \subseteq \C$ with 
\begin{equation}
    \mathcal{U} \cup \overline{\mathcal{U}} \supseteq   A \cup B \cup C
\end{equation}
is a uniqueness set for the phase retrieval problem in $\mathcal{F}_\alpha(\C)\cap \mathcal{S}^*$.
\item If $\Lambda$ is a $\nicefrac14$-Liouville set for $\ft_{4\alpha}(\C)$, every set $\mathcal{U}\subseteq \C$ with 
\begin{equation}
    \mathcal{U} \cup \overline{\mathcal{U}} \cup (-\mathcal{U}) \cup (-\overline{\mathcal{U}}) \supseteq A \cup B \cup C
\end{equation}
is a uniqueness set for the phase retrieval problem in $\mathcal{F}_\alpha(\C)\cap \mathcal{S}_-^*$.
    \end{enumerate}
\end{proposition}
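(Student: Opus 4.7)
The plan is to reduce both parts directly to Theorem \ref{thm:Main1} by symmetrization. Recall that every $F \in \mathcal{S}^*$ satisfies $|F(z)| = |F(\bar z)|$, and every $F \in \mathcal{S}_-^*$ satisfies in addition $|F(-z)| = |F(z)|$ (and hence $|F(-\bar z)|=|F(z)|$). Consequently, if $F, H \in \ft_\alpha(\C) \cap \mathcal{S}^*$ (resp. $\cap\, \mathcal{S}_-^*$) agree in modulus on $\mathcal{U}$, they automatically agree in modulus on $\mathcal{U} \cup \overline{\mathcal{U}}$ (resp. on $\mathcal{U} \cup \overline{\mathcal{U}} \cup (-\mathcal{U}) \cup (-\overline{\mathcal{U}})$). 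The strategy is thus to symmetrize the sample sets $A,B,C$ to ones indexed by the enlarged Liouville set and then invoke Theorem \ref{thm:Main1}.

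For part (1), I would define symmetrized sequences indexed by $\Lambda \cup \overline{\Lambda}$ by setting
$$
a'_\lambda := a_\lambda, \quad a'_{\bar\lambda} := \overline{a_\lambda}, \qquad \lambda \in \Lambda,
$$
and analogously $B',C'$. Since $f(z)=e^{-\gamma|z|^2}$ depends only on $|z|$ and $|\bar\lambda|=|\lambda|$, the $f$-closeness of $A,B,C$ to $\Lambda$ transfers to $f$-closeness of $A',B',C'$ to $\Lambda\cup\overline{\Lambda}$ with the same constant $\kappa$. Because complex conjugation is a Euclidean isometry, the triangles $\Delta(a_\lambda,b_\lambda,c_\lambda)$ and $\Delta(\overline{a_\lambda},\overline{b_\lambda},\overline{c_\lambda})$ are congruent, so the middle angle $\varphi$ is unchanged, and condition \eqref{eq:bd_condition} (with the same $\beta$ and $L$) extends verbatim to the enlarged index set. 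Since $\Lambda\cup\overline{\Lambda}$ is by hypothesis a Liouville set for $\ft_{4\alpha}(\C)$, Theorem \ref{thm:Main1} applies and declares $A'\cup B'\cup C'$ to be a uniqueness set for phase retrieval in $\ft_\alpha(\C)$ — and in particular in the subspace $\ft_\alpha(\C)\cap\mathcal{S}^*$. Finally, if $F,H\in\ft_\alpha(\C)\cap\mathcal{S}^*$ satisfy $|F|=|H|$ on $\mathcal{U}$, then $\mathcal{S}^*$-symmetry lifts this to $|F|=|H|$ on $\mathcal{U}\cup\overline{\mathcal{U}}\supseteq A\cup B\cup C$, and then, once more by $\mathcal{S}^*$-symmetry, to the full set $A'\cup B'\cup C'$, so that $F\sim H$.

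Part (2) proceeds identically, symmetrizing instead over the Klein four-group generated by $z\mapsto\bar z$ and $z\mapsto -z$, to obtain $A',B',C'$ indexed by $\Lambda\cup\overline{\Lambda}\cup(-\Lambda)\cup(-\overline{\Lambda})$; all four maps are Euclidean isometries preserving $|\cdot|$, so the $f$-closeness and angle conditions transfer with identical constants, and the $\nicefrac{1}{4}$-Liouville hypothesis supplies precisely the ambient Liouville property required by Theorem \ref{thm:Main1}. The argument is rather soft once the symmetrization is set up; the only delicate point is the bookkeeping that matches each original sample $(a_\lambda,b_\lambda,c_\lambda)$ to its mirror images in exactly the same pattern, so that the triangle attached to each new index point is a rigid image of the original one. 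This, however, is automatic from the isometry invariance of modulus, Euclidean distance, and triangle angles under $z\mapsto\bar z$ and $z\mapsto -z$, so no new quantitative estimates beyond those in Theorem \ref{thm:Main1} are needed.
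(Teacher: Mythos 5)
Your proposal is correct and follows essentially the same route as the paper: both reduce to Theorem \ref{thm:Main1} by symmetrizing the sample sequences over $z\mapsto\bar z$ (resp. the group generated by $z\mapsto\bar z$ and $z\mapsto -z$) so that they become $f$-close, with the same angle bound, to the enlarged set $\Lambda\cup\overline{\Lambda}$ (resp. its four-fold version), which the $\nicefrac12$- (resp. $\nicefrac14$-) Liouville hypothesis certifies as a Liouville set for $\ft_{4\alpha}(\C)$. Your write-up merely makes explicit the isometry-invariance bookkeeping that the paper compresses into the phrase ``we can extract subsequences.''
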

\begin{proof}
    To prove the first assertion, we let $F,H \in \mathcal{F}_\alpha(\C)\cap \mathcal{S}^*$ such that
    \begin{equation}\label{eq:FHLC}
        |F(\lambda)| = |H(\lambda)|, \quad \lambda \in \mathcal{U}.
    \end{equation}
    We have to show that $F \sim H$. To this end, we observe that the assumption $F,H \in \mathcal{S}^*$ gives $|F(z)|=|F(\overline{z})|$ and $|H(z)| = |H(\overline{z})|$ for every $z \in \C$. It follows from \eqref{eq:FHLC}, that
    $$
    |F(\lambda)| = |H(\lambda)|, \quad \lambda \in \mathcal{U}\cup \overline{\mathcal{U}}.
    $$
    Therefore, it suffices to prove that $\mathcal{U}\cup \overline{\mathcal{U}}$ is a uniqueness set for the phase retrieval problem in $\ft_\alpha(\C)$. To do so, we observe that the assumption on $\mathcal{U}$ implies that
    $$
    \mathcal{U} \cup \overline{\mathcal{U}} \supseteq  (A \cup \overline{A}) \cup (B \cup \overline{B}) \cup(C\cup \overline{C}).
    $$
    As $\Lambda$ is assumed to be a $\nicefrac12$-Liouville set for $\ft_{4\alpha}(\C)$, we have that $\Lambda'=\Lambda\cup \overline{\Lambda}$ is a Liouville set for $\ft_{4\alpha}(\C)$. 
    Moreover, we can extract subsequences 
    $A'=(a_{\lambda'})_{\lambda'\in\Lambda'}, B'=(b_{\lambda'})_{\lambda'\in\Lambda'}, C'=(c_{\lambda'})_{\lambda'\in\Lambda'} \subseteq  \mathcal{U}\cup \overline{\mathcal{U}}$, which are $f$-close to $\Lambda'$ and such that condition \eqref{eq:bd_condition} holds with $\Lambda$ replaced by $\Lambda'$. The statement now follows by applying Theorem \ref{thm:Main1}.

The second assertion follows analogously to the first assertion.
\end{proof}

\subsection{Gabor phase retrieval via random perturbations}\label{subsec:gaborrandomrestriction}

This section is dedicated to the proof of Theorem \ref{thm:Main6} and Theorem \ref{thm:Main7}, which state that two resp. one random perturbations of a lattice forms a uniqueness set for the Gabor phase in $L^2(\R,\R)$ resp. $L^2_e(\R,\R)$ almost surely. To begin with, we require a result which asserts that $\Delta(A,B,\overline{A})$ is generically non-degenerate when $A,B$ are picked at random in a disk centered on the real axis.

\begin{lemma}\label{lem:2randompts}
    Let $\Omega\subseteq\C$ be a disk centered at the real line. 
    Moreover, let $A,B$ be a pair of independent complex random variables, both uniformly distributed on $\Omega$.
    Then it holds for all $\varepsilon>0$ that 
    $$
    \mathbb{P}[\varphi(A,B,\overline{A})< \varepsilon] \le 4  \varepsilon.
    $$
\end{lemma}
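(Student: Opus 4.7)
The plan is to mimic the proof of Lemma \ref{lem:3randompts}, which I will treat essentially as a black box. The crucial observation is that the proof of Lemma \ref{lem:3randompts} actually establishes the stronger pointwise estimate: for \emph{any} two fixed points $a,b\in\mathbb{D}$, if $C$ is uniformly distributed on $\mathbb{D}$, then $\mathbb{P}[\varphi(a,b,C)<\varepsilon]\le 4\varepsilon$. This uniformity in the first two arguments (which are not used as random variables in that proof until the very last line) is exactly what I intend to exploit here, since in the present statement the two ``non-random'' vertices $A$ and $\overline{A}$ are deterministically coupled rather than independent.

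First I would reduce to the case $\Omega=\mathbb{D}$. Writing $\Omega=B_r(c_0)$ with $c_0\in\R$, the affine map $z\mapsto(z-c_0)/r$ sends $\Omega$ bijectively onto $\mathbb{D}$ and pushes the uniform measure on $\Omega$ forward to the uniform measure on $\mathbb{D}$. The hypothesis that $\Omega$ is centered on the real line is used precisely here: because $c_0\in\R$ the map commutes with complex conjugation, so the pair $(A,\overline{A})$ is sent to $(A',\overline{A'})$, and $B$ is sent to an independent $B'$ uniform on $\mathbb{D}$. Since the angle $\varphi$ is invariant under Euclidean similarities, proving the estimate for $\mathbb{D}$ is sufficient.

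Next I would condition on $A=a$. With probability one $a\in\mathbb{D}\setminus\R$, so $a\ne\overline{a}$, and $\overline{a}\in\mathbb{D}$ by symmetry of $\mathbb{D}$ under conjugation. The conditional probability
\[
\mathbb{P}\bigl[\varphi(A,B,\overline{A})<\varepsilon\mid A=a\bigr]\;=\;\mathbb{P}\bigl[\varphi(a,B,\overline{a})<\varepsilon\bigr]
\]
then fits exactly into the pointwise bound recalled above (with the fixed pair $(a,\overline{a})$ playing the role of $(a,b)$ and $B$ playing the role of $C$), and is therefore at most $4\varepsilon$. Integrating over the distribution of $A$ yields the claim.

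No serious obstacle is anticipated. The only mildly subtle point is conceptual: one must recognize that the estimate emerging from the proof of Lemma \ref{lem:3randompts} is genuinely uniform in its first two arguments, and does not rely on $A$ and $B$ being independent there. Once this is acknowledged, the geometric core of the argument (the ``strip'' $S_\delta$ and the resulting lower bound on an acute angle of the triangle) is borrowed verbatim, and no new geometric estimate needs to be produced.
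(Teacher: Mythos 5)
Your proposal is correct and takes essentially the same route as the paper: the paper also reduces to $\mathbb{D}$, fixes $a\in\mathbb{D}\setminus\R$ (which holds almost surely), and reruns the strip-and-angle estimate of Lemma \ref{lem:3randompts} with the fixed pair $(a,\overline{a})$ and the independent random vertex $B$, the strip $S_\delta(a)=\{z\in\mathbb{D}:|\re(z)-\re(a)|<\delta\}$ being exactly $S_\delta(a,\overline{a})$ since the line through $a$ and $\overline{a}$ is vertical. Your explicit justification of the reduction to $\mathbb{D}$ (the rescaling commutes with conjugation because the center is real) is a detail the paper leaves implicit, but the argument is the same.
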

\begin{proof}
    Without loss of generality, we may assume that $\Omega=\mathbb{D}$.
    For $a\in\mathbb{D}$, and for $\delta>0$, we denote
    $$
    S_\delta(a) \coloneqq \{z\in\mathbb{D}:\, |\re{(z)} - \re{(a)}|< \delta\}.
    $$
    Notice that with probability one it holds that $a \in \mathbb{D}\setminus \R$. In this case, an analogous argument as in the proof of Lemma \ref{lem:3randompts} shows that for all $b\in \mathbb D \setminus S_\delta(a)$, we have
    $$
    \varphi(a,b,\overline{a}) \ge 0.4 \delta.
    $$
Since 
    $
    \mathbb{P}[B\in S_\delta(a)] \le \frac4\pi \delta,
    $
it follows that
$$
\mathbb{P}[\varphi(a,B,\overline{a}) < 0.4 \delta] \leq 1-\mathbb{P}[B \not \in S_\delta(a)] \leq \frac{4}{\pi} \delta.
$$
Choosing $\delta = \frac{\varepsilon}{0.4}$ shows that
$$
\mathbb{P}[\varphi(a,B,\overline{a}) < \varepsilon] \leq 4 \varepsilon,
$$
and therefore $\mathbb{P}[\varphi(A,B,\overline{A}) < \varepsilon] \leq 4 \varepsilon$
\end{proof}

With this, we are ready to prove that two random perturbations of a sufficiently dense lattice $\Lambda\in\mathcal{L}$ yield a uniqueness set for Gabor phase retrieval in $L^2(\R,\R)$ almost surely.

\begin{proof}[Proof of Theorem \ref{thm:Main6}]
We identify $\R^2$ with the complex plane by virtue of $(x,y)^T \simeq x+iy$.
Recall that $\mathcal{B} (L^2(\R,\R)) = \ft_{\pi}(\C)\cap \mathcal{S}^*$.
We will show that $\mathcal{U}$ forms a uniqueness set for the phase retrieval problem in $\ft_\pi(\C)\cap \mathcal{S}^*$ almost surely, which implies the statement.
To do so, denote by $\mathbb{H}_+$ the open upper halfplane and define $\Lambda_+\coloneqq \Lambda \cap \overline{\mathbb{H}_+}$.
As $\Lambda\in\mathcal{L}$, we have that $\Lambda_+\cup\overline{\Lambda_+}=\Lambda$.
Therefore, as $\Lambda$ is a Liouville set for $\ft_{4\pi}(\C)$, we have that $\Lambda_+$ is a $\nicefrac12$-Liouville set for $\ft_{4\pi}(\C)$.
For $\lambda \in \Lambda_+$ we define 
$$
A_\lambda\coloneqq Z_{\lambda,1}, \quad 
B_\lambda \coloneqq Z_{\lambda,2}, \quad
C_\lambda \coloneqq \overline{Z_{\bar{\lambda},1}}.
$$
Note that $C_\lambda$ is well defined since $\bar{\lambda}\in \Lambda$.
Moreover, denote $A=(A_\lambda)_{\lambda\in\Lambda_+}$, $B=(B_\lambda)_{\lambda \in\Lambda_+}$, and $C=(C_\lambda)_{\lambda\in\Lambda_+}$.
It is obvious, that $A,B$ are $f$-close to $\Lambda_+$. 
Since $Z_{\bar{\lambda},1}\in B_{f(\bar{\lambda})}(\bar{\lambda})$ we have for all $\lambda\in \Lambda_+$ that 
$$
|C_\lambda-\lambda| = \big| \overline{Z_{\bar{\lambda},1}}-\lambda\big|  
= 
\big| Z_{\bar{\lambda},1}-\bar{\lambda}\big| \le f(\bar{\lambda}) = f(\lambda).
$$
This shows that $C$ is $f$-close to $\Lambda_+$.
Notice that $\mathcal{U}=\{Z_{\lambda,\ell},\, (\lambda,\ell)\in J\}$ satisfies 
$$
\mathcal{U} \cup \overline{\mathcal{U}} \supseteq A \cup B \cup C.
$$
We fix $\beta \in (0,\gamma-2\alpha)$ arbitrary. According to Proposition \ref{prop:1214}(1) (with $\alpha=\pi$ and $\Lambda=\Lambda_+$), it suffices to show that the sequence 
$$
\eta_\lambda \coloneqq \frac{|\lambda| e^{-\beta|\lambda|^2}}{\varphi(A_\lambda,B_\lambda,C_\lambda)},\quad \lambda\in\Lambda_+,
$$
is almost surely bounded from above. Consider the events
$
E\coloneqq \{ \sup_{\lambda\in\Lambda_+} \eta_\lambda <\infty\},
$
and
$$
E_L \coloneqq \left \{ \forall \lambda \in \Lambda_+ : \eta_\lambda \leq L \right \}, \quad L>0.
$$
Since $E\supseteq E_L$ for all $L>0$, it suffices to show that $\mathbb{P}[E_L]\to 1$ as $L\to \infty$ in order to conclude that $\mathbb{P}[E]=1$.
We proceed similarly as in the proof of Theorem \ref{thm:Main4} and estimate
\begin{align*}
    \mathbb{P}[E_L] &= 1 - \mathbb{P}[\exists \lambda\in\Lambda_+:\, \eta_\lambda > L]\\
    & \ge 1 - \sum_{\lambda\in\Lambda_+} \mathbb{P}\left [\varphi(A_\lambda,B_\lambda,C_\lambda) < \frac1{L} |\lambda| e^{-\beta|\lambda|^2} \right ]. 
\end{align*}
We claim that for all $\lambda\in\Lambda_+$ and $\varepsilon>0$, it holds that 
$$
\mathbb{P}[\varphi(A_\lambda,B_\lambda,C_\lambda) < \varepsilon] \le 4 \varepsilon.
$$
Indeed, if $\lambda \in \Lambda_+\cap \R$, the statement follows from Lemma \ref{lem:2randompts}.
On the other hand, if $\lambda\in \Lambda_+ \cap \mathbb{H}_+$ we have that 
$$
(A_\lambda, B_\lambda, C_\lambda) = (Z_{\lambda,1}, Z_{\lambda,2}, \overline{Z_{\bar{\lambda},1}})
$$
are independent random complex variables, with each of them uniformly distributed on the disk $B_{f(\lambda)}(\lambda)$.
In this case, we can resort to Lemma \ref{lem:3randompts}.
Consequently, we have that
$$
\mathbb{P}[E_L] \ge 1 - \frac{4}{L}\sum_{\lambda\in\Lambda_+} |\lambda| e^{-\beta|\lambda|^2} \to 1, \quad L\to \infty,
$$
and are done.
\end{proof}

In the setting of Theorem \ref{thm:Main6}, we produce a uniqueness set of density $8+\varepsilon$, with $\varepsilon>0$ arbitrarily small.
In fact, the proof shows that 
$$
\{ Z_{\lambda,1},\, \lambda\in \Lambda\} \cup \{ Z_{\lambda,2},\, \lambda\in \Lambda_+\}  
$$
forms a uniqueness set almost surely.
A quarter of the available information, i.e., samples at
$$
Z_{\lambda,2}, \quad \lambda\in \Lambda\setminus \Lambda_+,
$$
is actually not used at all. This partly explains why we are able to further push down the lower bound on the density to $>6$, cf. Theorem \ref{thm:density_lower_bound}. A related statement holds with regards to the result where we restrict to signals in $L^2_e(\R,\R)$. The proof of the corresponding theorem comes next.
 
\begin{proof}[Proof of Theorem \ref{thm:Main7}]
    Identifying $\R^2$ with the complex plane, it suffices to show that $\mathcal{U}$ is a uniqueness set for the phase retrieval problem in $\mathcal{B}(L^2_e(\R,\R))=\ft_\pi(\C)\cap \mathcal{S}_-^*$ almost surely.
    
    To do so, we denote by $\Lambda_0$ the set of lattice points which are in the closed first quadrant minus the origin, that is, 
    $$
    \Lambda_0\coloneqq \{\lambda\in\Lambda\setminus \{0\} : \re{(\lambda)} \geq 0, \, \im{(\lambda)}\ge 0\}.
    $$
    Since $\Lambda \in \mathcal{L}$, it follows that
    $$
    \Lambda_0 \cup \overline{\Lambda_0} \cup (-\Lambda_0) \cup (- \overline{\Lambda_0}) = \Lambda \setminus \{0\},
    $$
    which implies that $\Lambda_0$ is a $\nicefrac14$-Liouville set for $\ft_{4\pi}(\C)$.
    For each $\lambda\in\Lambda_0$, we define three complex random variables via 
    $$
    A_\lambda\coloneqq Z_\lambda,\quad B_\lambda\coloneqq \overline{Z_{\bar{\lambda}}},\quad C_\lambda\coloneqq -Z_{-\lambda}.
    $$
    Clearly, $A=(A_\lambda)_{\lambda\in\Lambda_0}$ is $f$-close to $\Lambda_0$.
    It is not difficult to see, that the same holds true for $B=(B_\lambda)_{\lambda\in\Lambda_0}$ and $C=(C_\lambda)_{\lambda\in\Lambda_0}$.
    Indeed, if $\lambda\in\Lambda_0$ we have that 
    $$
    |B_\lambda-\lambda| = |\overline{Z_{\bar{\lambda}}} - \lambda| = |Z_{\bar{\lambda}}-\bar{\lambda}| \le f(\bar{\lambda)} = f(\lambda).
    $$
    Similarly, 
    $$
    |C_\lambda-\lambda| = |- Z_{-\lambda}-\lambda| = |Z_{-\lambda} - (-\lambda)| \le f(-\lambda) =f(\lambda).
    $$
    Moreover, we have that 
    $$
    \mathcal{U}\cup \overline{\mathcal{U}} \cup (-\mathcal{U})  \cup (-\overline{\mathcal{U}}) \supseteq A\cup B \cup C.
    $$
    Let $\beta\in (0,\gamma-2\alpha)$ be fixed and define a sequence of random variables by 
    $$
    \eta_\lambda \coloneqq \frac{|\lambda|e^{-\beta|\lambda|^2}}{\varphi(A_\lambda,B_\lambda,C_\lambda)}, \quad \lambda\in\Lambda_0.
    $$
    Proposition \ref{prop:1214}(2) states that $\mathcal{U}$ is a set of uniqueness for phase retrieval in $\ft_{4\pi}(\C)$ provided that $(\eta_\lambda)_{\lambda\in \Lambda_0}$ is bounded.
    Thus, it suffices to show that 
    $$
    E\coloneqq \left \{\sup_{\lambda\in\Lambda_0} \eta_\lambda < \infty \right \}
    $$
    occurs with probability one.
    We introduce 
    $$
    E_L\coloneqq \{ \forall \lambda\in\Lambda_0:\, \eta_\lambda\le L\},\quad L>0.
    $$
    Since $E\supseteq E_L$ for all $L>0$, it suffices to show that $\mathbb{P}[E_L]\to 1$ as $L\to \infty$ in order to conclude $\mathbb{P}[E]=1$. 
    Note that
    \begin{align*}
    \mathbb{P}[E_L] &= 1 - \mathbb{P}[\exists \lambda\in\Lambda_0 :  \eta_\lambda > L]\\
    & \ge 1 - \sum_{\lambda\in\Lambda_0} \mathbb{P}\left [\varphi(A_\lambda,B_\lambda,C_\lambda) < \frac1{L} |\lambda| e^{-\beta|\lambda|^2} \right ] 
\end{align*}
Given $\lambda\in\Lambda_0$, then  a) $\lambda$ is in the open first quadrant, b) $\lambda$ is on the real axis or c) $\lambda$ is on the imaginary axis. In case a) we have that 
$$
(A_\lambda,B_\lambda,C_\lambda) = (Z_\lambda, \overline{Z_{\bar{\lambda}}}, - Z_{-\lambda})
$$
is a triple of i.i.d. complex random variables, uniformly distributed on $B_{f(\lambda)}(\lambda)$.
Thus, it follows from Lemma \ref{lem:3randompts} that 
\begin{equation}\label{eq:boundprobvarphi}
    \mathbb{P}[\varphi(A_\lambda,B_\lambda,C_\lambda)< \varepsilon] \le 4 \varepsilon, \quad \varepsilon > 0.
\end{equation}
In case b) we have that 
$$
(A_\lambda,B_\lambda,C_\lambda) = (Z_\lambda, \overline{Z_\lambda}, - Z_{-\lambda}).
$$
Hence, $B_\lambda=\overline{A_\lambda}$. Note that $C_\lambda$ is uniformly distributed on $B_{f(\lambda)}(\lambda)$ and independent from $(A_\lambda,B_\lambda)$.
Thus, it follows from Lemma \ref{lem:2randompts} that \eqref{eq:boundprobvarphi} also holds true in case b).
For case c) one argues in a similar way. As a result we have that 
$$
\mathbb{P}[E_L] \ge 1 - \frac4{L} \sum_{\lambda\in\Lambda_0} |\lambda| e^{-\beta|\lambda|^2} \to 1, \quad L \to \infty.
$$
\end{proof}

\subsection{Gabor phase retrieval, density reduction, and separateness}\label{subsec:gabordetrestriction}

In this section we prove Theorem \ref{thm:density_lower_bound}. Observe, that Theorem \ref{thm:density_lower_bound}(1) follows directly from Theorem \ref{thm:Main1} and the fact that every lattice $\Lambda \subseteq \R^2$ of density $D(\Lambda) > 4$ is a Liouville set for $\ft_{4\pi}(\C)$ (see Theorem \ref{thm:Main3}). It remains to show the second and third claim of Theorem \ref{thm:density_lower_bound}, which state that for every $d>6$ resp. $d>3$, there exists a uniformly distributed uniqueness set for the Gabor phase retrieval problem in $L^2(\R,\R)$ resp. $L^2_e(\R,\R)$ having density $d$. In the latter case, the uniqueness set can be chosen to be separated. Before turning to the proofs of the statements, we require an elementary lemma related to translates of lattice Liouville sets.

\begin{lemma}\label{lma:liouville_shift}
    Let $\Lambda \subseteq \C$ be a lattice, and let $\alpha >0$. If $\Lambda$ is a Liouville set for $\ft_\alpha(\C)$, then $\Lambda-w$ is a Liouville set for $\ft_\alpha(\C)$ for every $w \in \C$.
\end{lemma}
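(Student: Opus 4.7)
The plan is to deduce the shifted set $\Lambda - w$ is a Liouville set by showing it contains a set of stable sampling for $\ft_\alpha(\C)$ and invoking Corollary \ref{cor:stable_sampling}. The essential point is that the Liouville property depends only on the sampling set, while translation leaves the two quantities that govern stable sampling -- separation and Beurling density -- unchanged.

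First I would apply Theorem \ref{thm:Main3} to the lattice $\Lambda$ to conclude that $s(\Lambda) < \pi/\alpha$, or equivalently $D(\Lambda) > \alpha/\pi$. Since $\Lambda - w$ is just a rigid translate of $\Lambda$, it is separated with separation constant $\delta(\Lambda - w) = \delta(\Lambda)$ and has the same (lower Beurling) density as $\Lambda$. In particular, $D^-(\Lambda - w) > \alpha/\pi$.

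Next I would pick $\beta$ with $\alpha < \beta < \pi D(\Lambda)$. By \cite[Lemma 4.31]{zhu:fock} (the same tool invoked in the proof of Corollary \ref{cor:stable_sampling}), the separated set $\Lambda - w$ then contains a separated subset $\Gamma$ that is uniformly close to the square lattice $\Lambda_\beta$. Theorem \ref{thm:uc_implies_liouville} applies to $\Gamma$ (with parameters $\alpha < \beta$) and shows that $\Gamma$ is itself a Liouville set for $\ft_\alpha(\C)$. Since every function in $\ft_\alpha(\C)$ that is bounded on the superset $\Lambda - w$ is a fortiori bounded on $\Gamma$, the superset $\Lambda - w$ inherits the Liouville property. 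Equivalently, one may simply invoke Corollary \ref{cor:stable_sampling} directly, since $\Gamma$ is uniformly close to $\Lambda_\beta$ with $\beta > \alpha$ and is thus a set of stable sampling for $\ft_\alpha(\C)$.

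There is no genuine obstacle to overcome here: the lemma is essentially a translation-invariance statement, and the only subtlety is that $\Lambda - w$ fails to be a lattice in the usual sense when $w \notin \Lambda$, so Theorem \ref{thm:Main3} cannot be applied to $\Lambda - w$ directly. This is circumvented by going through the density-based characterization of stable sampling sets, which is translation invariant, rather than through the lattice characterization.
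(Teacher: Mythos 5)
Your argument is correct, but it takes a genuinely different route from the paper. The paper shifts the \emph{function} rather than the set: given $F \in \ft_\alpha(\C)$ bounded on $\Lambda - w$, it observes via \eqref{eq:shift_episolon} that $F(\cdot - w)$ lies in $\ft_{\alpha+\varepsilon}(\C)$ and is bounded on $\Lambda$; choosing $\varepsilon$ small enough that $s(\Lambda) < \pi/(\alpha+\varepsilon)$ (possible because the inequality $s(\Lambda)<\pi/\alpha$ from Theorem \ref{thm:Main3} is strict), it applies Theorem \ref{thm:Main3} at level $\alpha+\varepsilon$ to conclude $F(\cdot - w)$, hence $F$, is constant. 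You instead shift the \emph{set} and exploit the translation invariance of separation and of the lower Beurling density, then rerun the pipeline of Corollary \ref{cor:stable_sampling}: $\Lambda - w$ is separated with density exceeding $\alpha/\pi$, so it contains a separated subsequence uniformly close to some $\Lambda_\beta$ with $\beta>\alpha$, which is Liouville by Theorem \ref{thm:uc_implies_liouville}, and the Liouville property passes to supersets. Both proofs cash in the same slack in the strict inequality $s(\Lambda)<\pi/\alpha$, just at different places (the Fock parameter versus the density). The paper's version is shorter and self-contained given Theorem \ref{thm:Main3}; yours is slightly heavier on machinery but proves more, namely that \emph{any} translate of \emph{any} separated set of lower Beurling density above $\alpha/\pi$ is a Liouville set, with the lattice hypothesis used only to extract the density bound. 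One small imprecision: as cited in the paper, \cite[Lemma 4.31]{zhu:fock} yields a subsequence uniformly close to $\Lambda_\beta$ for \emph{some} $\beta>\alpha$, not necessarily for a $\beta$ you prescribe in advance in $(\alpha,\pi D(\Lambda))$; this does not affect your argument, since existence of some such $\beta$ is all you use.
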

\begin{proof}
    The lattice $\Lambda$ is a Liouville set for $\ft_\alpha(\C)$, if and only if $s(\Lambda) < \frac{\pi}{\alpha}$. Let $\varepsilon>0$ such that $s(\Lambda) < \frac{\pi}{\alpha + \varepsilon}$. Suppose that $F \in \ft_\alpha(\C)$ is bounded on $\Lambda - w$ for some $w \in \C$. According to equation \eqref{eq:shift_episolon}, $F(\cdot - w)$ is a function belonging to $\ft_{\alpha + \varepsilon}(\C)$, and this function is bounded on $\Lambda$. Theorem \ref{thm:Main3} yields the assertion.
\end{proof}

In the following proof we make use of the fact that if $\Lambda$ has uniform density $D(\Lambda) = d$, and $A$ is uniformly close to $\Lambda$, then $D(A)=d$. This follows, for instance, from \cite[Lemma 4.23]{zhu:fock}. Moreover, the definition of the uniform density directly implies that if $A$ and $B$ are disjoint and have uniform density $d_A$ and $d_B$, respectively, then $D(A \cup B) = d_A+d_B$.

\begin{proof}[Proof of Theorem \ref{thm:density_lower_bound}(2)]

Let $\Gamma = v(\Z + i \Z)$ be a square lattice with $0 < v < \frac{1}{2}$. Then $D(\Gamma) = \frac{1}{v^2} > 4$, and therefore $\Gamma$ is a Liouville set for $\ft_{4\pi}(\C)$. Further, consider the shifted lattice
$$
\Lambda \coloneqq \Gamma + \frac{v}{2}i.
$$
In view of Lemma \ref{lma:liouville_shift}, the set $\Lambda$ is a Liouville set for $\ft_{4\pi}(\C)$, and is symmetric with respect to both coordinate axes.
Let $\Gamma' \subseteq \Gamma$ be the sub-lattice defined by
$$
\Gamma' \coloneqq v\{ m(1+i) + n(1-i) : (m,n) \in \Z^2 \}.
$$
Then $D(\Gamma') = \frac{1}{2} D(\Gamma) = \frac{1}{2v^2}$. Defining
$$
\Lambda' \coloneqq \Gamma' + \frac{v}{2}i,
$$
it follows that
$$
\Lambda' \sqcup \overline{\Lambda'} = \Lambda,
$$
where the symbol $\sqcup$ indicates that the union is disjoint.
In particular, this shows that $\Lambda'$ is a $\nicefrac12$-Liouville set for $\ft_{4\pi}(\C)$.
We proceed by choosing sequences $A=(a_\lambda)_{\lambda\in\Lambda'},B=(b_\lambda)_{\lambda\in\Lambda'},C=(c_\lambda)_{\lambda\in\Lambda'} \subseteq \C$, that are uniformly noncollinear and $f$-close to $\Lambda'$.
Since $\Lambda'$ is a $\nicefrac12$-Liouville set for $\ft_{4\pi}(\C)$, it follows from Proposition \ref{prop:1214}(1), that $\mathcal{U} \coloneqq A\cup B\cup C$ is a uniqueness set for the phase retrieval problem in $\ft_\pi(\C)\cap \mathcal{S}^*$.
Thus, $\mathcal{U}$ is a uniqueness set for the Gabor phase retrieval problem in $L^2(\R,\R)$.\footnote{Notice, that a specific choice for the sequence $A$ is $A=\Lambda'$, and an alternative to the uniqueness sets $\mathcal{U} \coloneqq A\cup B\cup C = \Lambda' \cup B \cup C$ is given by the union $\mathcal{U}_2 = \Lambda' \cup \overline{B} \cup C$. Figure \ref{fig:3} visualizes the set $\mathcal{U}_2$ for specific choices of the $f$-close sets $B$ and $C$.} 

Since $\Gamma'$ is a sub-lattice with density $D(\Gamma')=\frac{1}{2v^2}$, its shifted version $\Lambda'$ also has density $D(\Lambda')=\frac{1}{2v^2}$. Moreover, as $A,B,C$ are pairwise disjoint and $f$-close to $\Lambda'$, we get that
$$
D(\mathcal{U}) = \frac3{2v^2}.
$$
Since $v$ can be as close to $\frac{1}{2}$ as we please, the statement follows.
\end{proof}

For the space $L^2_e(\R,\R)$, a further reduction of the density by a factor $\frac{1}{2}$ compared to the space $L^2(\R,\R)$ can be performed. In contrast to all previous results, also separateness occurs.

\begin{proof}[Proof of Theorem \ref{thm:density_lower_bound}(3)]
Let $\Gamma = v(\Z + i \Z)$ be a square lattice with $v < \frac{1}{2}$. Then $D(\Gamma) = \frac{1}{v^2} > 4$, and therefore $\Gamma$ is a Liouville set for $\ft_{4\pi}(\C)$. Consider the shifted lattice
$$
\Lambda \coloneqq \Gamma + \frac{v}{2}(1+i),
$$
enumerated in the following natural order:
$$
\Lambda = \left \{  \lambda_{mn} = vm + \frac{v}{2} + i \left (vn + \frac{v}{2} \right ) : (m,n) \in \Z^2 \right \}.
$$
In view of Lemma \ref{lma:liouville_shift}, $\Lambda$ is a Liouville set for $\ft_{4\pi}(\C)$, and is symmetric with respect to both coordinate axes.

We extract a $\nicefrac14$-Liouville set out of $\Lambda$ as follows: let $Q_1,Q_2,Q_3,Q_4 \subseteq \Lambda$ denote columns of the shifted lattice $\Lambda$ belonging to the four quadrants (i.e., $Q_1$ belongs to the first quadrant, $Q_2$ belongs to the second quadrant, ...), defined by
\begin{equation}
    \begin{split}
        Q_1 &\coloneqq \{ \lambda_{4m,n} : m,n \geq 0 \}, \\
        Q_2 &\coloneqq \{ \lambda_{-2-4m,n} : m,n \geq 0 \}, \\
        Q_3 &\coloneqq \{ \lambda_{-3-4m,-n-1} : m,n \geq 0 \}, \\
        Q_4 &\coloneqq \{ \lambda_{3+4m,-n-1} : m,n \geq 0 \}.
    \end{split}
\end{equation}
Further, let $\Lambda' \coloneqq Q_1 \sqcup Q_2 \sqcup Q_3 \sqcup Q_4$.
It follows from the definition of $Q_1,Q_2,Q_3,Q_4$, that
$$
\Lambda = \Lambda' \cup \overline{\Lambda'} \cup (-\Lambda') \cup (-\overline{\Lambda'}).
$$
Hence, $\Lambda'$ is a $\nicefrac14$-Liouville set for $\ft_{4\alpha}(\C)$. Further, $\Lambda'$ is uniformly distributed with density $D(\Lambda') = \frac{1}{4} D(\Lambda) = \frac{1}{4v^2}$. We choose sequences $A=(a_\lambda)_{\lambda\in\Lambda'},B=(b_\lambda)_{\lambda\in\Lambda'},C=(c_\lambda)_{\lambda\in\Lambda'} \subseteq \C$ that are uniformly noncollinear and $f$-close to $\Lambda'$. Further, we set
$$
\mathcal{U} \coloneqq \overline{A} \cup (-B) \cup (-\overline{C}).
$$
Proposition \ref{prop:1214}(2) implies that $\mathcal{U}$ is a uniqueness set for the phase retrieval problem in $\ft_\pi(\C) \cap \mathcal{S}^*_-$. Thus, it is a uniqueness set for the Gabor phase retrieval problem in $L^2_e(\R,\R)$.\footnote{The uniqueness set $\mathcal{U} = \overline{A} \cup (-B) \cup (-\overline{C})$ is visualized in Figure \ref{fig:4}.}

Now observe, that $\mathcal{U}$ admits an enumeration by means of the elements of $\Lambda \setminus \Lambda'$, such that $\mathcal{U}$ is $f$-close to $\Lambda \setminus \Lambda'$. This implies that $\mathcal{U}$ is separated. Further, $\Lambda \setminus \Lambda'$ is uniformly distributed with density $D(\Lambda \setminus \Lambda')=\frac{3}{4v^2}$. Using $f$-closeness once more, we conclude that $D(\mathcal{U}) = D(\Lambda \setminus \Lambda')=\frac{3}{4v^2}$, where $v$ can be as close to $\frac{1}{2}$ as we please. This yields the assertion.
\end{proof}

\begin{figure}
\centering
\hspace*{-0.4cm}
  \includegraphics[width=12.7cm]{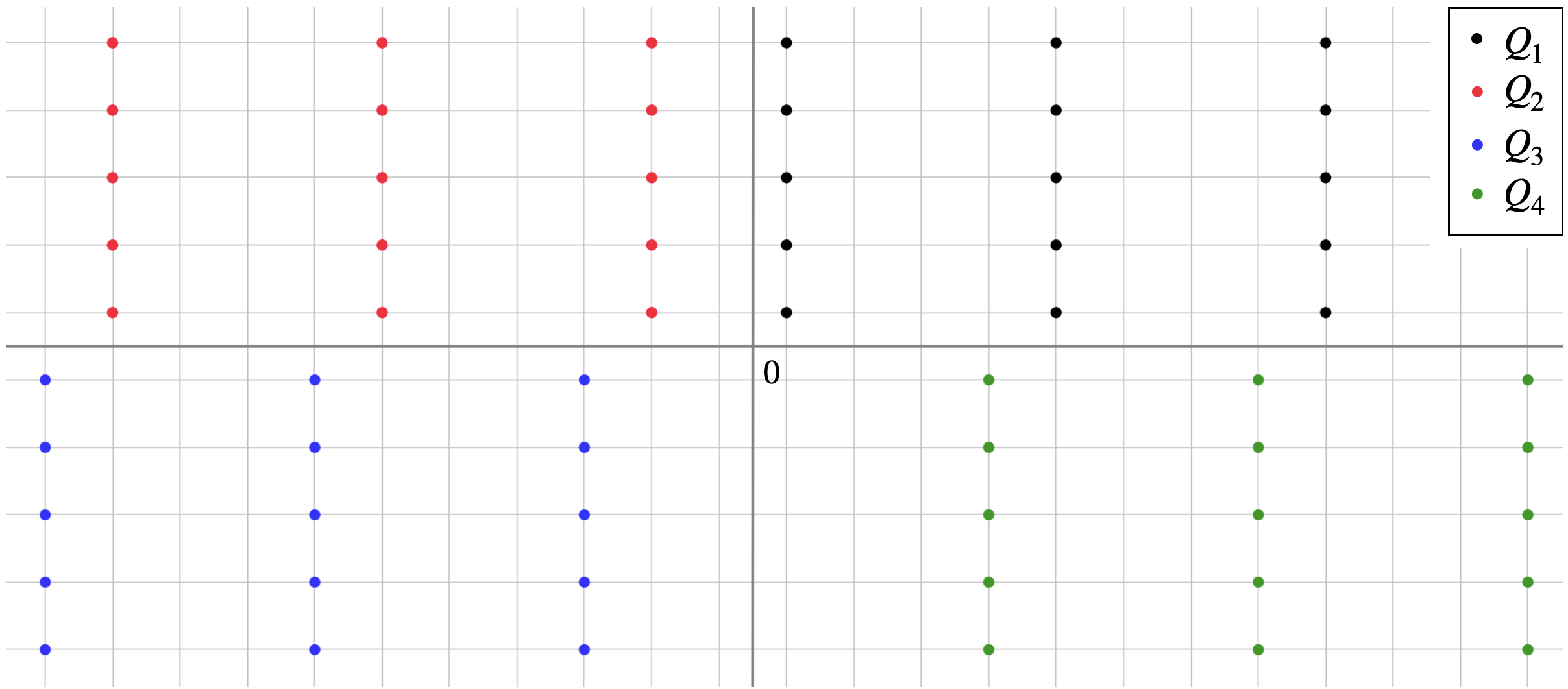}
\caption{Visualisation of the sets $Q_1,Q_2,Q_3$, and $Q_4$ used in the proof of Theorem \ref{thm:density_lower_bound}(3). The union $\Lambda' \coloneqq Q_1 \sqcup Q_2 \sqcup Q_3 \sqcup Q_4$ is a $\nicefrac14$-Liouville set for $\ft_{4\alpha}(\C)$ (the intersections of the grey mesh are the points of the square-lattice $\Lambda$).
}
\label{fig:proof}
\end{figure}

\section*{Acknowledgements}
Martin Rathmair was supported by the Erwin–Schr{\"o}dinger Program (J-4523) of
the Austrian Science Fund (FWF).

\bibliographystyle{acm}
\bibliography{bibfile}

\end{document}